\newcommand\tenq[2][1]{%
\def\useanchorwidth{T}%
\ifnum#1>1%
\stackunder[0pt]{\tenq[\numexpr#1-1\relax]{#2}}{\!\scriptscriptstyle\thicksim}%
\else%
\stackunder[1pt]{#2}{\!\scriptstyle\thicksim}%
\fi%
}
\DeclareRobustCommand\widecheck[1]{{\mathpalette\@widecheck{#1}}}
\def\@widecheck#1#2{%
    \setbox\z@\hbox{\m@th$#1#2$}%
    \setbox\tw@\hbox{\m@th$#1%
       \widehat{%
          \vrule\@width\z@\@height\ht\z@
          \vrule\@height\z@\@width\wd\z@}$}%
    \dp\tw@-\ht\z@
    \@tempdima\ht\z@ \advance\@tempdima2\ht\tw@ \divide\@tempdima\thr@@
    \setbox\tw@\hbox{%
       \raise\@tempdima\hbox{\scalebox{1}[-1]{\lower\@tempdima\box
\tw@}}}%
    {\ooalign{\box\tw@ \cr \box\z@}}}
\def\tr{\mathop{\text{tr}}\kern.2ex}
\def\P{{\mathrm P}}
\def\E{{\mathrm E}}
\newcolumntype{L}[1]{>{\raggedright\let\newline\\\arraybackslash\hspace{0pt}}m{#1}}
\newcolumntype{C}[1]{>{  \centering\let\newline\\\arraybackslash\hspace{0pt}}m{#1}}
\newcolumntype{R}[1]{>{ \raggedleft\let\newline\\\arraybackslash\hspace{0pt}}m{#1}}
\newcolumntype{d}[1]{D{.}{.}{#1}}
\newcolumntype{H}{>{\setbox0=\hbox\bgroup}c<{\egroup}@{}}
\newcolumntype{Z}{>{\setbox0=\hbox\bgroup}c<{\egroup}@{\hspace*{-\tabcolsep}}}
\newcolumntype{b}{X}
\newcolumntype{s}{>{\hsize=.5\hsize}X}
\numberwithin{equation}{section}
\newtheorem{theorem}{Theorem}[section]
\newtheorem{lemma}{Lemma}[section]
\newtheorem{proposition}{Proposition}[section]
\newtheorem{assumption}{Assumption}[section]
\providecommand{\customgenericname}{}
\newcommand{\newcustomtheorem}[2]{%
  \newenvironment{#1}[1]
  {%
   \renewcommand\customgenericname{#2}%
   \renewcommand\theinnercustomgeneric{##1}%
   \innercustomgeneric
  }
  {\endinnercustomgeneric}
}
\theoremstyle{definition}
\newtheorem{remark}{Remark}[section]
\newcommand{\mylabel}[2]{#2\def\@currentlabel{#2}\label{#1}}
\begin{document}

\setlength{\abovedisplayskip}{5pt}
\setlength{\belowdisplayskip}{5pt}
\setlength{\abovedisplayshortskip}{5pt}
\setlength{\belowdisplayshortskip}{5pt}
\hypersetup{colorlinks,breaklinks,urlcolor=blue,linkcolor=blue}

\title{\LARGE On propensity score matching with a diverging number of matches}

\author{Yihui He\thanks{School of Mathematical Sciences, Peking University, Beijing, BJ 100080, China; e-mail: {\tt 2000010770@stu.pku.edu.cn}} ~~~and ~ Fang Han\thanks{Department of Statistics, University of Washington, Seattle, WA 98195, USA; e-mail: {\tt fanghan@uw.edu}}
}

\date{\today}

\maketitle

\vspace{-1em}

\begin{abstract}
This paper reexamines \cite{abadie2016matching}'s work on propensity score matching for average treatment effect estimation. We explore the asymptotic behavior of these estimators when the number of nearest neighbors, $M$, grows with the sample size. It is shown, hardly surprising but technically nontrivial, that the modified estimators can improve upon the original fixed-$M$ estimators in terms of efficiency. Additionally, we demonstrate the potential to attain the semiparametric efficiency lower bound when the propensity score achieves ``sufficient'' dimension reduction, 
echoing \cite{hahn1998role}'s insight about the role of dimension reduction in propensity score-based causal inference.
\end{abstract}

{\bf Keywords}: diverging-$M$ asymptotics, dimension reduction, semiparametric efficiency, Le Cam's third lemma, Le Cam discretization device.

\section{Introduction}

Consider a quadruple $(X, W, Y(0), Y(1))$, where $W\in\{0,1\}$ denotes the treatment status, $X\in\mathbb{R}^k$ represents the pre-treatment variables, and $Y(0)$ and $Y(1)$ signify the potential outcomes \citep{neyman1923applications,rubin1974estimating} under treatment and control. This paper's primary focus is on inferring the average treatment effect (ATE), denoted as $\tau$ and defined as
\[
\tau:=\E\Big[Y(1)-Y(0)\Big],
\]
based on $N$ independent observations of $(X,W,Y(W))$. 

A significant focus of this paper centers on a nearest neighbor (NN) matching estimator \citep{abadie2006large,abadie2011bias,abadie2012martingale,abadie2016matching,lin2023estimation}; this estimator matches the subjects under study with those in the opposite treatment group who possess similar propensity scores, with these scores being estimated using the same dataset.

In detail, let $$p(x):=\P(W=1|X=x)$$ be the propensity score that was introduced in \cite{rosenbaum1983central}. We assume that $p(x)$ can be reliably quantified using a preset family of functions $\{p(x;\theta), \theta\in\Theta\subset\mathbb{R}^k\}$. The following theorem, presented informally here and to be rigorously detailed in Section \ref{sec:theory} ahead, constitutes our central result.

\begin{theorem}[Main theorem, informal]\label{thm:main-toy} Consider the estimator $\hat\tau_N(\bar\theta_N)$, which relies on propensity score-based nearest neighbor (NN) matching using estimated propensity scores $p(X_i;\bar\theta_N)$'s. Here, $\bar\theta_N$ denotes an asymptotically discrete \citep{MR1784901,van2000asymptotic} maximum likelihood estimator of $\theta$, computed from the same dataset. Then, under certain regularity conditions, the following results hold for $\hat\tau_N(\bar\theta_N)$ as the number of nearest neighbors, denoted as $M$, approaches infinity.
 \begin{enumerate}[itemsep=-.5ex,label=(\roman*)]
\item $\sqrt{N}(\hat\tau_N(\bar\theta_N)-\tau)$ is {\it approximately} asymptotically normal (Theorem \ref{thm:1});
\item the asymptotic variance of $\hat\tau_N(\bar\theta_N)$ is strictly smaller than those of a fixed $M$, and it is possible to attain the semiparametric efficiency lower bound for estimating $\tau$ \citep{hahn1998role} (an implication of Theorem \ref{thm:1});
\item there exists a consistent estimator of the asymptotic variance (Theorem \ref{thm:var}).
\end{enumerate}
\end{theorem}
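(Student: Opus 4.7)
The proof plan is to attack parts (i) and (ii) in tandem via a three-stage reduction, and then derive part (iii) from the matched-pair residuals produced along the way.

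\textbf{Stage 1: Oracle expansion with diverging $M$.} I would first analyze the infeasible estimator $\hat\tau_N(\theta_0)$ that uses the true propensity parameter $\theta_0$. Following \cite{abadie2016matching}, decompose
\[
\sqrt{N}\bigl(\hat\tau_N(\theta_0)-\tau\bigr)
 = \sqrt{N}\,\bar D_N + \sqrt{N}\,B_N + \sqrt{N}\,R_N,
\]
where $\bar D_N$ is the oracle contrast built from $\tau(X_i):=\E[Y(1)-Y(0)\,|\,X_i]$ and the $M$-NN averages, $B_N$ collects the matching-bias terms, and $R_N$ is the conditional-mean residual aggregate. When $M\to\infty$, the contribution of matching noise to the variance scales like $1/M$ and therefore vanishes, so $\bar D_N$ collapses (up to $o_p(N^{-1/2})$ terms) to a sum of i.i.d.\ influence functions involving $\tau(X_i)$ and a propensity-weighted outcome residual. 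A Hájek-type projection argument replaces the fixed-$M$ martingale CLT of \cite{abadie2012martingale}; the principal subtlety is controlling $B_N$ at rate $o_p(1)$ when $M$ grows, for which a higher-order expansion of the expected NN distance in the propensity-induced density is needed to pin down an acceptable growth rate of $M$.

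\textbf{Stage 2: Passage from $\theta_0$ to $\bar\theta_N$ via the Le Cam discretization device.} Because the matching map $\theta\mapsto\hat\tau_N(\theta)$ is piecewise constant and not differentiable, classical delta-method or $Z$-estimator expansions are unavailable. Instead, I would invoke the discretization device \citep{MR1784901,van2000asymptotic}: on a $1/\sqrt{N}$ grid around $\theta_0$, the asymptotically discrete MLE $\bar\theta_N$ takes only polynomially many values with high probability, and on each deterministic $\theta$ in this grid one can compute the local expansion of $\hat\tau_N(\theta)-\hat\tau_N(\theta_0)$ via a contiguity-type argument. The key object is then the joint limit of $\bigl(\sqrt{N}(\hat\tau_N(\theta_0)-\tau),\,\sqrt{N}(\bar\theta_N-\theta_0)\bigr)$, which by Stage 1 combined with local asymptotic normality of the parametric propensity model is jointly Gaussian with a computable covariance. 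Le Cam's third lemma then transfers the limit law to $\hat\tau_N(\bar\theta_N)$ and produces the correct drift correction, giving part (i); the resulting asymptotic variance is strictly smaller than the fixed-$M$ variance (because the $1/M$ matching-noise component disappears) and coincides with \cite{hahn1998role}'s semiparametric bound precisely when $p(X;\theta_0)$ is a sufficient summary of $X$ for the outcome regressions $\E[Y(w)\,|\,X]$, yielding part (ii).

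\textbf{Stage 3 and the main obstacle.} For part (iii), I would deploy an Abadie--Imbens-style estimator that replaces conditional variances $\sigma_w^2(X_i)$ by appropriately normalized within-match squared deviations of the matched outcomes. Since $M\to\infty$, within-match averages consistently estimate $\E[Y(w)\,|\,p(X;\theta_0)]$ and the squared deviations consistently estimate the conditional variance; the plug-in of $\bar\theta_N$ for $\theta_0$ is again handled by the discretization device together with uniform consistency of the matching functional over shrinking neighborhoods of $\theta_0$. The principal obstacle throughout is Stage 2: the non-smoothness of $\theta\mapsto\hat\tau_N(\theta)$ means that without discretization one cannot linearize the plug-in error, and the whole argument rests on correctly identifying the covariance between the oracle influence function and $\sqrt{N}(\bar\theta_N-\theta_0)$, so that the drift extracted from Le Cam's third lemma is the genuine one predicted by semiparametric theory rather than an artifact of the grid construction.
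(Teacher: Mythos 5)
Your plan follows essentially the same architecture as the paper: an oracle analysis of the matching estimator with diverging $M$ along local parameter sequences, a matching-bias term killed under a polynomial growth restriction on $M$, joint asymptotic normality with the score/MLE, and then the Le Cam discretization device plus the third lemma (the paper invokes this in packaged form through Theorem 3.2 of \citet{andreou2012alternative}, after verifying their conditions (ULAN) and (AN) via Proposition \ref{prop:esti}); the variance estimator is likewise the Abadie--Imbens one, with the plug-in of the discretized MLE handled exactly as you suggest, by uniform convergence over deterministic $O(1/\sqrt N)$ sequences $\theta_{(N)}$ combined with contiguity. You diverge in two technical steps. First, for the CLT you propose a H\'ajek projection in place of the fixed-$M$ martingale CLT; the paper instead keeps the martingale representation of \citet{abadie2016matching} essentially verbatim (Lemma \ref{lem:10}) and lets diverging-$M$ laws of large numbers for $K_{M,\theta_N}(i)/M$ and its square (Lemmas \ref{lem:7} and \ref{lem:9}) drive the conditional variances to their inverse-probability-weighting limits, which is precisely where the order-$1/M$ matching-noise inflation of the fixed-$M$ variance disappears; a projection argument would have to reprove negligibility of the projection remainder for the $K_{M}/M$ weights, i.e., essentially the same estimates, so it buys little here. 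Second, your ``higher-order expansion of the expected NN distance'' is more machinery than is needed: since matching is on a scalar score, a crude equal-probability partition bound (Lemma \ref{lem:2}) already gives bias of order $o(N^{-1/2})$ in probability for $M=O(N^v)$ with $v<1/2$, using only the global Lipschitz condition of Assumption \ref{assumption:3}(iv). One caution about your Stage 2 phrasing: the argument never expands $\hat\tau_N(\theta)-\hat\tau_N(\theta_0)$ across grid points---comparing matchings at two parameter values is exactly the non-smoothness one must avoid; instead one proves, for every deterministic local sequence $\theta_N=\theta^*+h/\sqrt N$, joint convergence of $\sqrt N(\hat\tau_N(\theta_N)-\tau)$, $\sqrt N(\hat\theta_N-\theta_N)$, and $\Lambda_N(\theta^*\mid\theta_N)$ under $\P_{\theta_N}$, and the discretization then allows substituting $\bar\theta_N$ without any linearization of $\theta\mapsto\hat\tau_N(\theta)$; your identification of the cross-covariance $c'I_{\theta^*}^{-1}$ as the crux of the drift correction is nevertheless the right one.
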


\subsection{Related literature}

Our findings are anchored in the seminal contributions of Abadie and Imbens, particularly their pioneering work on propensity score nearest neighbor (NN) matching with a fixed value of $M$ as presented in \cite{abadie2016matching}, among their other influential works \citep{abadie2006large,abadie2011bias,abadie2012martingale}. What sets this paper apart from \cite{abadie2016matching} is our reevaluation of the conditions for $M$, which we force to grow to infinity as $N\to\infty$. In this context, our results also align with the recent study by \cite{lin2023estimation} and \cite{lin2022regression}, who explored the diverging-$M$ matching using the original values of $X_i$'s.

Methodologically, the estimator $\hat\tau_N(\bar\theta_N)$ is part of a broader family of propensity score-based estimators for the ATE. This family has been extensively explored in the literature, including influential works by \cite{rosenbaum1987model}, \cite{robins1994estimation}, \cite{rubin1996matching}, \cite{heckman1997matching}, \cite{hahn1998role}, \cite{scharfstein1999adjusting}, \cite{hirano2003efficient}, \cite{frolich2004finite}, \cite{frolich2005matching}, \cite{huber2013performance}, \cite{chernozhukov2018double}, \cite{su2023estimated}, among many others. Comprehensive reviews can be found in \cite{imbens2004nonparametric}, \cite{stuart2010matching}, and \cite{imbens2015matching}.

Secondly, the estimator $\hat\tau_N(\bar\theta_N)$ falls within the category of {\it substituting estimators}, where a portion of the parameters is initially estimated. A broader discussion of this class of estimators can be found in \cite{pierce1982asymptotic}, \cite{randles1982asymptotic}, \cite{pollard1989asymptotics}, \cite{newey1994large}, \cite{andrews1994empirical}, \cite{andreou2012alternative}, along with various works on causal inference \citep{robins1992estimating,henmi2004paradox,hitomi2008puzzling,lok2021estimating} and some interesting recent developments about optimal transport-based statistical inference \citep{hallin2022center,hallin2023rank}.

Thirdly, the estimator $\hat\tau_N(\bar\theta_N)$ is part of the ``graph-based statistics'' family, which aims to estimate a functional of the probability measure using random graphs constructed from an empirical realization of the underlying probability distribution. In this context, the fixed-$M$ asymptotics, as explored by Abadie and Imbens, relates interestingly to recent research on Sourav Chatterjee's rank correlation based on NN graphs with a fixed $M$ \citep{chatterjee2020new,azadkia2019simple}; in particular, they both exhibit asymptotic normality \citep{abadie2006large,lin2022limit} and bootstrap inconsistency \citep{abadie2008failure,lin2023failure}. Both of them also benefit from using a diverging $M$ for enhancing efficiency \citep{lin2023estimation,lin2021boosting}.

Theoretical underpinnings of our study differ from previous works like \cite{lin2023estimation} and \cite{lin2022regression}. Our main theorem is essentially an adaptation of the existing analysis of \cite{abadie2016matching} while addressing the limit of $M$ going to infinity throughout our proof. This adaptation is facilitated by the relative simplicity of handling estimated propensity scores, which are essentially random scalars. Like \cite{abadie2016matching}, we employ Le Cam's third lemma and Le Cam's discretization technique, as detailed in \citet[Chapter 5.7]{van2000asymptotic}, in order to avoid establishing uniform convergence results. For a justification of the use of an asymptotically discrete estimator, see \citet[Chapter 6.3]{MR1784901}.

\subsection{Notation and setup}

This paper closely follows the notation system of \cite{abadie2016matching}. Throughout, it is assumed that there exists a {\it known} form of generalized linear specification for the propensity score $p(x)$, written as 
\begin{align}\label{eq:ps-base}
p(x)=F(x'\theta^*)=:p(x;\theta^*), 
\end{align}
where $F$ is {\it known a priori} while $\theta^*$ is unknown to us. 

For any $\theta\in\Theta$, we write $\P_\theta$ to represent the joint distribution of $(X,W,Y(0),Y(1))$, with $\P(W=1|X=x)$ now taking the value $p(x;\theta):=F(x'\theta)$ while keeping the distribution of $X$ and the conditional distribution of $(Y(0),Y(1))|X,W$  unchanged. Let $\E_\theta$ and $\Var_\theta$ denote the according expectation and variance under $\P_\theta$. In particular, the data generating distribution is understood to be $\P=\P_{\theta^*}$. We similarly shorthand $\E_{\theta^*}$ and $\Var_{\theta^*}$ as ``$\E$'' and ``$\Var$''. Let 
\[
q_\theta:=\P_\theta(W=1)~~ {\rm and}~~ Y:=Y(W).
\] 

Adopting Le Cam's view on limits of experiment \citep{le1972limits}, we consider such $\theta=\theta_N$ that is allowed to change with the sample size. Accordingly, it is necessary to emphasize that, throughout this paper, we implicitly consider a triangular array setting, where sampling from a sequence of probability measures $\P_{\theta_N}$ is allowed. In this paper, the interest is in the sequence
 \begin{align*}
 \theta_N = \theta^*+h/\sqrt{N},
 \end{align*}
 with $h$ as a conformable vector of constants. Let
 \[
 Z_{N,i}=(X_{N,i},W_{N,i},Y_{N,i})
 \]
 represent the observed data, with the subscript $N$ often suppressed in the sequel. 

Following \cite{abadie2016matching}, we write 
\[
\mu(w, x):=\E[Y \mid W=w, X=x]~~ {\rm and} ~~ \sigma^{2}(w, x):=\operatorname{Var}(Y \mid W=w, X=x) 
\]
to denote the conditional mean and variance of  $Y$  given  $W=w$  and  $X=x$. Additionally, let  
\[
\bar{\mu}(w, p):=\E[Y \mid W=w, p(X)=p] ~~{\rm and}~~  \bar{\sigma}^{2}(w, p):=\operatorname{Var}(Y \mid W=w ,  p(X)=p) 
\]
represent the conditional mean and variance of  $Y$  given  $W=w$  and $p(X)=p$. Similarly, define
\[
\bar{\mu}_{\theta}(w,p):=\E_{\theta}\left[Y \mid W=w, p(X;\theta)=p\right]~~ {\rm and}~~ \bar{\sigma}^2_{\theta}(w,p):=\operatorname{Var}_{\theta}\left(Y \mid W=w, p(X;\theta)=p\right).
\]

\section{Propensity score matching}

In order to describe Abadie and Imbens's propensity score matching estimator, let's first introduce some statistics about the $M$-NN matching based on the values of a general propensity score ``estimate'' $p(X_i;\theta)$. 

Let $\mathcal{J}_{M,\theta}(i)$ represent the index set of the $M$ matches of unit $i$, measured based on the values of $p(X_i;\theta)$, among the units whose $W=1-W_{i}$. In other words, define 
\begin{align*}
&\mathcal{J}_{M,\theta}(i):=\notag\\
& \Big\{j: W_{j}=1-W_{i},\sum_{k: W_{k}=1-W_{i}} \ind\Big(\left|p(X_{i};\theta)-p(X_{k};\theta)\right| \leq\left|p(X_{i};\theta)-p(X_{j};\theta)\right|\Big) \leq M\Big\},
\end{align*}
with $\ind(\cdot)$ representing the indicator function. Furthermore, introduce 
\[
K_{M,\theta}(\cdot):\Big\{1,\cdots, N\Big\}\rightarrow \mathbb{N}~~~\text{(the set of natural numbers)}
\]
to be the number of matched times of unit $i$, i.e.,
\begin{align*}
&K_{M,\theta}(i):=\notag\\
&\sum_{1\leq j\leq N,W_{j}=1-W_{i}} \ind\left(\sum_{1\leq k\leq N,W_{k}=W_{i}}\ind\Big(\left|p(X_{k};\theta)-p(X_{j};\theta)\right|\leq\left|p(X_{i};\theta)-p(X_{j};\theta)\right|\Big)\leq M\right).
\end{align*}

The propensity score matching estimator of $\tau$, based on $p(X_i;\theta)$'s with a generic $\theta$, is then defined to be
\begin{align*}
    \widehat{\tau}_N(\theta)&=\frac{1}{N} \sum_{i=1}^{N}\left(2 W_{i}-1\right)\left(Y_{i}-\frac{1}{M} \sum_{j \in \mathcal{J}_{M,\theta}(i)} Y_{j}\right)   \notag\\
    &=\frac{1}{N} \sum_{i=1}^N (2W_i-1)\Big(1+\frac{K_{M,\theta}(i)}{M}\Big)Y_i.
\end{align*}


To complete the estimation process, one needs an estimate of $\theta^*$ to be plugged into $\hat\tau_N(\cdot)$. In light of \eqref{eq:ps-base} and following \cite{abadie2016matching}, we estimate $\theta^*$ by maximizing the log-likelihood function,
\[
L(\theta~|~Z_1,\ldots,Z_N):=\sum_{i=1}^N\Big[W_i\log F(X_i'\theta) + (1-W_i)\log\{1-F(X_i'\theta)\}\Big],
\]
yielding the maximum likelihood estimator (MLE) $\hat\theta_N$. 

The final estimator of $\tau$ is then defined to be $\hat\tau_N(\hat\theta_N)$.

\section{Theory}\label{sec:theory}

We first outline the main assumptions needed for establishing Theorem \ref{thm:main-toy}.

\begin{assumption} \label{assumption:1} It is assumed that
 \begin{enumerate}[itemsep=-.5ex,label=(\roman*)]
\item $(Y(0),Y(1))$ is independent of $W$ conditional on $X$ almost surely;
\item $p(X)$ is supported over $[\underline{p}, \bar{p}]$ with $0<\underline{p}<\bar{p}<1$,  and has a Lebesgue density that is continuous over  $[\underline{p}, \bar{p}]$. 
\end{enumerate}
\end{assumption}

\begin{assumption} \label{assumption:2} $\{Z_{i}\}_{i=1}^N$ are $N$ independent draws from $\P=\P_{\theta^*}$.
\end{assumption} 

\begin{assumption} \label{assumption:3} It is assumed that
 \begin{enumerate}[itemsep=-.5ex,label=(\roman*)]
\item $\theta^{*} \in \operatorname{int}(\Theta)$  with a compact $\Theta$, $X$ has a bounded support, and  $\E\left[X X^{\prime}\right]$  is nonsingular;
\item $F: \mathbb{R} \to(0,1)$  is continuously differentiable with a strictly positive and continuous derivative function $f$;
\item there exists a component of $X$ that is continuously distributed, has nonzero coefficient in $\theta^*$, and admits a continuous density function conditional on the rest of $X$;
\item there exist some constants $\varepsilon>0$ and $C_{\bar{\mu}}<\infty$ such that, for all  $\theta$  with the Euclidean distance $\left\|\theta-\theta^{*}\right\| \leq \varepsilon$ and $w\in\{0,1\}$, we have
 \begin{enumerate}[itemsep=-.5ex,label=(\alph*)]
\item $|\bar{\mu}_{\theta}(w,p_1)-\bar{\mu}_{\theta}(w,p_2)|\leq C_{\bar{\mu}}|p_1-p_2|$ holds for any $p_1$ and $p_2$;
\item $\bar{\sigma}^2_{\theta}(w,p)$ are equicontinuous in $p$; 
\item $\E_{\theta}\left[Y^4 \mid W=w, p(X;\theta)=p\right]$ are uniformly bounded. 
\end{enumerate}
\end{enumerate}
\end{assumption}

\begin{assumption} \label{assumption:4} For any  $\mathbb{R}^{k+2} \to \mathbb{R}$ bounded and measurable function, $r(y, w, x)$, that is continuous in  $x$, and for any sequence $ \tilde{\theta}_{N} \rightarrow \theta^{*}$, it is assumed that
\[
\E_{\tilde{\theta}_{N}}\left[r(Y, W, X) \mid W, p(X;\tilde{\theta}_N)\right] \text{ converges to } \E\left[r(Y, W, X) \mid W, p(X)\right],  \text{ almost surely}, 
\]
under $\operatorname{P}_{\tilde{\theta}_{N}}$.
\end{assumption}

Assumptions \ref{assumption:1}-\ref{assumption:4} are modified and reordered versions of Assumptions 1-5 in \cite{abadie2016matching}. In particular, Assumption \ref{assumption:1} is identical to Assumption 1 combined with Assumption 2(i) of \cite{abadie2016matching}. Assumption \ref{assumption:2} is Assumption 3 in \cite{abadie2016matching}; notably speaking, both allow for a triangular array setting intrinsically.  Assumption \ref{assumption:3} is a modified version of Assumption 4 in \cite{abadie2016matching}. Compared to \cite{abadie2016matching}, we additionally require a global Lipschitz constant in Assumption \ref{assumption:3}(iv) in order to prove Lemma \ref{lem:3} ahead. Also, we require $f$ to be continuous and $\bar{\sigma}_\theta^2$ to be equicontinuous in order to prove the uniform convergence of Riemann's Sum in Lemma \ref{lem:5} ahead. Besides, we require the uniform convergence of conditional fourth moment of $Y$ as in \cite{abadie2006large}, in order to prove the convergence of the variance estimator. Assumptions 2(ii) and (iii) in \cite{abadie2016matching} are the corollary of Assumption \ref{assumption:3}(iv). Lastly, Assumption \ref{assumption:4} is exactly Assumption 5 in \cite{abadie2016matching}.

Next, we formally introduce Le Cam's discretization trick. For any positive constant $d$, following \cite{abadie2016matching}, we transform $\hat\theta_N$ to an {\it asymptotically discrete} estimator, $\bar\theta_N$, by setting $\bar{\theta}_{N, j}=(d / \sqrt{N})\left[\sqrt{N} \widehat{\theta}_{N, j} / d\right]$, with $[.]$ as a function outputting the input's nearest integer. 

The following is the main theorem of this paper.

\begin{theorem}\label{thm:1}
Assume Assumptions \ref{assumption:1}-\ref{assumption:4} hold. Assume further that $M=O(N^v)$ for some $v<1/2$ and $M\to\infty$ as $N\to\infty$. Then, the true distribution satisfies that
\[
\lim _{d \downarrow 0} \lim _{N \rightarrow \infty} \operatorname{P}\left(\sqrt{N}\left(\sigma^{2}-c^{\prime} I_{\theta^{*}}^{-1} c\right)^{-1 / 2}\left(\widehat{\tau}_{N}\left(\bar{\theta}_{N}\right)-\tau\right) \leq z\right)=\int_{-\infty}^{z} \frac{1}{\sqrt{2 \pi}} \exp \left(-\frac{1}{2} x^{2}\right) {\sf d} x,
\]
where
\[
I_{\theta}=\E\left[\frac{f\left(X^{\prime} \theta\right)^{2}}{p(X;\theta)\left(1-p(X;\theta)\right)} X X^{\prime}\right]
\]
is the Fisher information at $\theta$, 
\[
\ c:= \E\left[\left(\frac{\operatorname{Cov}\left(X, \mu(1, X) \mid p(X)\right)}{p(X)}+\frac{\operatorname{Cov}\left(X, \mu(0, X) \mid p(X)\right)}{1-p(X)}\right) f(X'\theta^*)\right]
\]
signifies the cross term, and 
\begin{align*}
\sigma^{2} :=  \E\left[(\bar{\mu}(1, p(X))-\bar{\mu}(0, p(X))-\tau)^{2}\right]+\E\left[\frac{\bar{\sigma}^{2}(1, p(X))}{p(X)}\right]+\E\left[\frac{\bar{\sigma}^{2}(0, p(X))}{1-p(X)}\right]
\end{align*}
is the asymptotic variance of $\hat\tau_N(\theta^*)$.
\end{theorem}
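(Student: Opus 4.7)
The plan is to follow the blueprint of \cite{abadie2016matching} while pushing every internal step through to accommodate $M\to\infty$. The architecture consists of four stages: (a) pin down the asymptotic law of $\sqrt{N}(\hat\tau_N(\theta^*)-\tau)$, i.e.\ the matching estimator that uses the \emph{true} propensity score; (b) lift this to a joint asymptotic law of the estimator and the likelihood score $S_N=N^{-1/2}\sum_i\dot\ell(Z_i;\theta^*)$, identifying the cross-covariance as the $c$ in the theorem; (c) use Le Cam's third lemma together with LAN to switch between null and contiguous alternative measures; and (d) use Le Cam's discretization trick on $\bar\theta_N$ to reduce the plug-in analysis to (a)--(c).

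Concretely, for stage (a), I would decompose $\sqrt{N}(\hat\tau_N(\theta^*)-\tau)$ into signal, matching-bias, and matching-variance pieces. Because matching is on the one-dimensional propensity score, the bias is of order $M/N$, so $\sqrt{N}$ times the bias is $O(N^{v-1/2})=o(1)$ under $v<1/2$; a martingale-style CLT then delivers $\sqrt{N}(\hat\tau_N(\theta^*)-\tau)\Rightarrow \mathcal{N}(0,\sigma^2)$, with Assumption~\ref{assumption:3}(iv)(b)--(c) supplying uniform convergence of the relevant Riemann sums and a Lyapunov-type third moment bound. For stage (b), a direct computation of the asymptotic covariance between each leading-order summand of $\hat\tau_N(\theta^*)-\tau$ and $\dot\ell(Z_i;\theta^*)$ (folding indicator-indexed sums into conditional expectations given $p(X)$ via Assumption~\ref{assumption:4}) identifies $c$ exactly as stated. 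For stage (c), Le Cam's third lemma applied to this joint normal limit yields, under $\P_{\theta^*+h/\sqrt{N}}$, that $\sqrt{N}(\hat\tau_N(\theta^*)-\tau)\Rightarrow\mathcal{N}(c'h,\sigma^2)$; running the argument in reverse -- now treating $\theta^*+h/\sqrt N$ as the null and invoking stage (a) there together with the continuity of $\sigma^2_\theta$ in $\theta$ -- gives, under $\P_{\theta^*}$, that $\sqrt{N}(\hat\tau_N(\theta^*+h/\sqrt{N})-\tau)\Rightarrow\mathcal{N}(-c'h,\sigma^2)$. For stage (d), write the target probability as a sum over the grid values $h$ of $\sqrt{N}(\bar\theta_N-\theta^*)$; extending stages (a)--(c) to the full joint convergence including $\sqrt{N}(\bar\theta_N-\theta^*)=I_{\theta^*}^{-1}S_N+o_P(1)$, the discretized mixture collapses, as $d\downarrow 0$, to the asymptotic representation $\sqrt{N}(\hat\tau_N(\bar\theta_N)-\tau)=\sqrt{N}(\hat\tau_N(\theta^*)-\tau)-c'I_{\theta^*}^{-1}S_N+o_P(1)$, from which a direct variance computation yields the stated asymptotic variance $\sigma^2-c'I_{\theta^*}^{-1}c$.

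The main obstacle I anticipate is stage (a): upgrading the fixed-$M$ CLT of \cite{abadie2016matching} to $M\to\infty$. Two issues emerge. First, the matching-bias bound must be genuinely global in the propensity-score space, since the $M$-th nearest neighbor now sits $\Theta(M/N)$ away from its target, well outside any $o(1/N)$ window where a local Taylor expansion would apply; this is exactly why Assumption~\ref{assumption:3}(iv)(a) strengthens \cite{abadie2016matching}'s local Lipschitz condition to a uniform one over an $\varepsilon$-neighborhood of $\theta^*$. Second, the variance contributions from the match-count random variables $K_{M,\theta}(i)$ require moment bounds that hold uniformly over a growing match set, of the flavor worked out in \cite{lin2023estimation}. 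Once stage (a) is secured, stages (b)--(d) reduce to largely cosmetic modifications of the \cite{abadie2016matching} contiguity-and-discretization framework.
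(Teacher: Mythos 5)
Your proposal follows essentially the same route as the paper: the paper decomposes $\sqrt{N}(\hat\tau_N(\theta_N)-\tau)$ into a matching-bias term $R_N$ (killed via a sharpened, global-Lipschitz version of the nearest-neighbor discrepancy bound, which is where $M=O(N^v)$, $v<1/2$, enters) plus a martingale term $D_N$ that is shown jointly asymptotically normal with the central sequence $\Delta_N$ with cross-covariance $c$, and then invokes the ULAN/Le Cam third lemma plus discretization machinery (packaged as Theorem 3.2 of Andreou--Werker) to obtain the plug-in limit $\sigma^2-c'I_{\theta^*}^{-1}c$ --- exactly your stages (a)--(d), including your correct identification of the two genuinely new difficulties (the global bias bound and uniform moments of $K_{M,\theta}(i)$ over a diverging match set). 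The only cosmetic difference is that the paper establishes the joint convergence directly under the contiguous measures $\P_{\theta_N}$ rather than transferring from $\P_{\theta^*}$ by hand, which is an equivalent formulation.
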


It is straightforward to check that the asymptotic variance of $\hat\tau_N(\bar\theta_N)$, of the form $\sigma^2-c'I_{\theta^*}^{-1}c$, is {\it strictly smaller} than those matching estimators with a fixed $M$; cf. \citet[Proposition 1 and Theorem 1]{abadie2016matching}. In addition, the asymptotic variance is no greater than that of $\hat\tau_N(\theta^*)$, the matching estimator using the oracle $\theta^*$, a well known phenomenon. Furthermore, when 
\begin{align}\label{eq:sdr}
\bar{\mu}(w, p(x))=\mu(w, x)~~~{\rm and}~~~\bar{\sigma}^{2}(w, p(x))=\sigma^{2}(w, x),~~{\rm for}~w=0,1,
\end{align}
the asymptotic variance of $\hat\tau_N(\bar\theta_N)$ attains the following semiparametric efficiency lower bound for estimating $\tau$ \citep{hahn1998role},
\[
\sigma^{2,{\rm eff}}:= \E\left[(\mu(1, X)-\mu(0,X)-\tau)^{2}\right]+\E\left[\frac{{\sigma}^{2}(1, X)}{p(X)}\right]+\E\left[\frac{\sigma^{2}(0, X)}{1-p(X)}\right].
\]
Equation \eqref{eq:sdr}, which could be called the {\it sufficient dimension reduction condition} for propensity score matching, would be satisfied if both $\mu(w,x)$ and $\sigma^2(w,x)$ are functions of $p(x)$. Of note, under Assumption \ref{assumption:3}(ii),  Condition \eqref{eq:sdr} is equivalent to assuming that $\mu(w,x)$ and $\sigma^2(w,x)$ exhibit a single-index form as functions of $x'\theta^*$ for each $w$; cf. \cite{hahn1998role} and \citet[Chapter 3.3.2]{angrist2009mostly}.  


We next consider estimating the asymptotic variance. Inspired by \cite{abadie2016matching}, we can estimate $\sigma^2-c'I_{\theta^*}c$ using $\widehat{\sigma}^{2}-\widehat{c}'\widehat{I}_{\theta^{*}}\widehat{c}$, where
\begin{align*}
\widehat{\sigma}^{2}:= & \frac{1}{N} \sum_{i=1}^{N}\left(\left(2 W_i-1\right)\left(Y_i-\frac{1}{M} \sum_{j \in \mathcal{J}_{M}(i, \widehat{\theta}_N)} Y_j\right)-\widehat{\tau}_{N}(\widehat{\theta}_N)\right)^{2}\notag\\
&+\frac{1}{N} \sum_{i=1}^{N}\left(\left(\frac{K_{M, \widehat{\theta}_N}(i)}{M}\right)^{2}+\frac{2M-1}{M}\left(\frac{K_{M, \widehat{\theta}_N}(i)}{M}\right)\right) \widehat{\bar{\sigma}}^{2}\left(W_i, p(X_i)\right),\\
\widehat{c}:= & \frac{1}{N} \sum_{i=1}^{N}\left(\frac{\widehat{\operatorname{Cov}}\left(X_i, \mu\left(1, X_i\right) \mid p(X_i)\right)}{p(X_i;\widehat{\theta}_N)}+\frac{\widehat{\operatorname{Cov}}\left(X_i, \mu\left(0, X_i\right) \mid p(X_i)\right)}{1-p(X_i;\widehat{\theta}_N)}\right) f\left(X_i^{\prime} \widehat{\theta}_{N}\right),\\
~~{\rm and}~~\widehat{I}_{\theta^{*}}:=&\frac{1}{N} \sum_{i=1}^{N} \frac{f\left(X_i^{\prime} \widehat{\theta}_{N}\right)^{2}}{p(X_i;\widehat{\theta}_N)\left(1-p(X_i;\widehat{\theta}_N)\right)} X_i X_i^{\prime},
\end{align*}
with $\widehat{\bar{\sigma}}^{2}\left(W_i, p(X_i)\right)$ and $\widehat{\operatorname{Cov}}\left(X_i, \mu\left(w, X_i\right) \mid p(X_i)\right)$ defined in the same way as \cite{abadie2016matching}:
\begin{align*}
&\widehat{\bar{\sigma}}^{2}\left(W_i, p(X_i)\right):=\frac{1}{Q-1} \sum_{j \in \mathcal{H}_{Q}\left(i, \widehat{\theta}_N\right)}\left(Y_j-\frac{1}{Q} \sum_{k \in \mathcal{H}_{Q}\left(i, \widehat{\theta}_N\right)} Y_k\right)^{2},\\
&\widehat{\operatorname{Cov}}\left(X_i, \mu\left(W_i, X_i\right) \mid p(X_i)\right) \notag\\
&~~~~~~:=\frac{1}{L-1} \sum_{j \in \mathcal{H}_{L}\left(i, \widehat{\theta}_{N}\right)}\left(X_j-\frac{1}{L} \sum_{k \in \mathcal{H}_{L}\left(i, \widehat{\theta}_{N}\right)} X_k\right)\left(Y_j-\frac{1}{L} \sum_{k \in \mathcal{H}_{L}\left(i, \widehat{\theta}_{N}\right)} Y_k\right),\\
{\rm and}~~~&\widehat{\operatorname{Cov}}\left(X_i, \mu\left(1-W_i, X_i\right) \mid p(X_i)\right) \notag\\
&~~~~~~:=\frac{1}{L-1} \sum_{j \in \mathcal{J}_{L}\left(i, \widehat{\theta}_{N}\right)}\left(X_j-\frac{1}{L} \sum_{k \in \mathcal{J}_{L}\left(i, \widehat{\theta}_{N}\right)} X_k\right)\left(Y_j-\frac{1}{L} \sum_{k \in \mathcal{J}_{L}\left(i, \widehat{\theta}_{N}\right)} Y_k\right).
\end{align*}
Here, for a generic $\theta\in\Theta$, $\mathcal{H}_{M,\theta}(i)$ represents the set of the $M$ matches of unit $i$, based on the propensity score matching with $p(X_{i};\theta)$, among the units whose $W=W_{i}$. In other words, there is
\begin{align*}
&\mathcal{H}_{M,\theta}(i):= \notag\\
&\left\{j: W_{j}=W_{i},\sum_{k: W_{k}=W_{i}}\ind\Big(\left|p(X_{i};\theta)-p(X_{k};\theta)\right| \leq \left|p(X_{i};\theta)-p(X_{j};\theta)\right|\Big) \leq M\right\}.
\end{align*}

It is noteworthy that the variance estimator under examination is exactly the one presented in \cite{abadie2016matching} in practical applications. However, the following theorem distinguishes itself by investigating distinct asymptotic behaviors for the parameter $M$, forcing it to diverge.

\begin{theorem}\label{thm:var}
    Assume Assumptions \ref{assumption:1}-\ref{assumption:4} hold. Assume further that there exists a constant $v<1/2$ such that, as $N\to\infty$,
\[
    M\to\infty,~ M=O(N^v),~ Q\to\infty,~ Q=O(N^v),~ \text{ and } L\geq 2 \text{ is a fixed finite positive integer.} 
\]    
Then, under $\P$, $(\widehat{\sigma}^2,\widehat{c},\widehat{I}_{\theta^*})$ is a consistent estimator of $(\sigma^2,c,I_{\theta^*})$.
\end{theorem}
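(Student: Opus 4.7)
The plan is to establish consistency of $\widehat{I}_{\theta^*}$, $\widehat{c}$, and $\widehat{\sigma}^2$ as three independent claims, leaning throughout on the consistency of the MLE, $\widehat{\theta}_N\to\theta^*$ in $\P$-probability, which follows from standard arguments under Assumption \ref{assumption:3}. For $\widehat{I}_{\theta^*}$, the summand $f(X_i'\theta)^2 X_iX_i'/[p(X_i;\theta)(1-p(X_i;\theta))]$ is bounded and continuous in $(x,\theta)$ on the support of $X$ times a neighborhood of $\theta^*$ (bounded $X$ by Assumption \ref{assumption:3}(i), continuous $f$ by Assumption \ref{assumption:3}(ii), and propensity scores bounded away from $\{0,1\}$ by Assumption \ref{assumption:1}(ii)); a uniform law of large numbers combined with the plug-in of $\widehat{\theta}_N$ delivers $\widehat{I}_{\theta^*}\to I_{\theta^*}$. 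For $\widehat{c}$, the parameter $L$ is held fixed, so the argument follows \cite{abadie2016matching} essentially verbatim: matches concentrate at rate $O_p(1/N)$ because the propensity-score density is bounded below; the Lipschitz hypothesis on $\bar{\mu}_\theta$ in Assumption \ref{assumption:3}(iv)(a) controls the bias from inexact matching; Assumption \ref{assumption:4} transfers the limit from $\theta^*$ to $\widehat{\theta}_N$; and a final LLN over the $N$ summands closes the argument.

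The substantive work is showing $\widehat{\sigma}^2\to\sigma^2$. Decompose $\widehat{\sigma}^2=T_A+T_B$ along its two displayed sums. For $T_A$, under $M\to\infty$ with $M=O(N^v)$ the inner matched average $M^{-1}\sum_{j\in\mathcal{J}_{M,\widehat{\theta}_N}(i)}Y_j$ converges in probability to $\bar{\mu}(1-W_i,p(X_i))$: the bias vanishes by Lipschitz plus the $O_p(M/N)$ matching radius, while the $1/M$ variance averaging handles the noise. Taking squares, subtracting $\widehat{\tau}_N(\widehat{\theta}_N)\to\tau$, and applying a LLN yield
\[
T_A\to\E\bigl[(\bar{\mu}(1,p(X))-\bar{\mu}(0,p(X))-\tau)^2\bigr]+\E\bigl[\bar{\sigma}^2(W,p(X))\bigr].
\]
For $T_B$, three ingredients are needed: (i) $K_{M,\widehat{\theta}_N}(i)/M$ concentrates conditionally on $X_i$ at $p(X_i)/(1-p(X_i))$ when $W_i=0$ and at $(1-p(X_i))/p(X_i)$ when $W_i=1$, via first- and second-moment calculations in the style of \cite{abadie2006large}; (ii) the prefactor $(2M-1)/M\to 2$; (iii) with $Q\to\infty$ and $Q=O(N^v)$ for some $v<1/2$, the within-group matching variance estimator $\widehat{\bar{\sigma}}^2(W_i,p(X_i))$ is consistent for $\bar{\sigma}^2(W_i,p(X_i))$, using the equicontinuity in Assumption \ref{assumption:3}(iv)(b) for the bias and the bounded fourth moment in Assumption \ref{assumption:3}(iv)(c) for the variance of the sample variance. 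Collecting and invoking the algebraic identity $(1-p)+p(2-p)/(1-p)=1/(1-p)$ (and its symmetric $W=1$ counterpart $p+(1-p)(1+p)/p=1/p$) make $T_A+T_B$ telescope into precisely $\sigma^2$.

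The main obstacle I anticipate is the combination of (i) and (iii) above: \cite{abadie2016matching} supply analogous moment computations only for fixed $M$ and $Q$, so I will need a variance bound of the form $\mathrm{Var}(K_M(i)/M\mid X_i)=O(1/M)$ (obtainable by counting pairs of opposite-treatment units that share $i$ as one of their nearest $M$) together with a parallel bias-plus-variance control for $\widehat{\bar{\sigma}}^2$ that balances $Q\to\infty$ against $Q/N\to 0$. Transferring each such statement from $\theta^*$ to $\widehat{\theta}_N$ inside propensity-score-dependent statistics is handled by the asymptotic-discretization device already used in the proof of Theorem \ref{thm:1}, so the genuinely new technical ingredient relative to \cite{abadie2016matching} is this pair of diverging-$M$ and diverging-$Q$ moment bounds.
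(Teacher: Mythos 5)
Your decomposition of $\widehat{\sigma}^2$ into the two displayed sums, the limits you assign to them, and the telescoping identities $(1-p)+p(2-p)/(1-p)=1/(1-p)$ and $p+(1-p)(1+p)/p=1/p$ are all correct and match what the paper obtains (the paper uses a finer decomposition into seven terms $I_1,\dots,I_7$, with $I_3+I_4$ and $I_6+I_7$ playing the roles of your $T_A$ and $T_B$ limits and the cross terms shown to vanish). You have also correctly identified the two genuinely new technical ingredients relative to \cite{abadie2016matching}: a diverging-$M$ bound of the type $\operatorname{Var}(K_M(i)/M\mid \cdot)=O(1/M)$, which indeed comes from the conditional binomial representation $K_M(i)\sim B(N_{1-W_i},P_{Ni})$ exactly as in the paper's Lemma \ref{lem:8}, and a diverging-$Q$ bias--variance argument for $\widehat{\bar{\sigma}}^{2}$ using Assumptions \ref{assumption:3}(iv)(b)--(c), which is the paper's treatment of $I_{6,1}$ and $I_{6,2}$.

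The genuine gap is in how you handle the plug-in of the estimated propensity-score parameter. You propose to invoke ``the asymptotic-discretization device already used in the proof of Theorem \ref{thm:1},'' but that device applies to the discretized estimator $\bar{\theta}_N$, whose values on any ball of radius $H/\sqrt{N}$ are finite in number so that a union bound over deterministic local sequences suffices; the variance estimator in Theorem \ref{thm:var} is built from the \emph{undiscretized} $\widehat{\theta}_N$, for which this argument does not close. The paper instead proves that $\tilde{\sigma}^2\to\sigma^2$ \emph{uniformly} over all deterministic sequences $\theta_{(N)}$ with $\sqrt{N}\|\theta_{(N)}-\theta^*\|<H$, which forces every supporting lemma (the $K_M/M$ moment bounds, the matching-radius bounds, the Riemann-sum lemma) to be restated and proved uniformly in $\theta_{(N)}$. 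A second, related omission: the natural conditional-independence structure $W\indep X\mid p(X;\theta_{(N)})$ that makes the $K_M(i)/M$ computations tractable holds under $\P_{\theta_{(N)}}$, not under the data-generating $\P=\P_{\theta^*}$, so after establishing convergence under $\P_{\theta_{(N)}}$ one must transfer it back to $\P$; the paper does this via contiguity, using the LAN expansion to show $\Lambda_N(\theta^*\mid\theta_{(N)})=O_{\P_{\theta_{(N)}}}(1)$ uniformly. Neither step is fatal to your strategy, but both must be supplied explicitly, and together they constitute a substantial fraction of the actual proof.
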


\begin{remark}
Theorem \ref{thm:var} differentiates from the one presented in  \cite{abadie2016matching} by incorporating asymptotically diverging values for both $M$ and $Q$. The introduction of a diverging $M$ is a necessity, as our analysis relies on $M$ approaching infinity. The introduction of a diverging $Q$, on the other hand, is convenient for managing the term $I_{6,2}$ in the proof of Theorem \ref{thm:var}. In fact, even for the study of finite-$M$ matching \citep{abadie2016matching}, one could still employ a diverging $Q$ to consistently estimate the variance.
\end{remark}

\section{Simulation}\label{sec:simulation}
This section uses simulation to complement the theory. 
We run our simulations under the following two designs, where the true ATE are both 5.

{\it Design 1.} We adopt the first design of \cite{abadie2016matching} and consider two covariates, $X_{1}$  and  $X_{2}$, that are independently sampled from the uniform distribution over $[-1 / 2,1 / 2]$. We generate potential outcomes as 
\[
Y(0)=3 X_{1}-3 X_{2}+U_{0} ~~~{\rm and}~~~ Y(1)=5+5 X_{1}+X_{2}+U_{1}, 
\]
where  $U_{0}$  and  $U_{1}$  are independent standard Gaussian random variables sampled independently of the system. Lastly,  we generate $W$ through a logistic regression model,
\[
\operatorname{P}\left(W=1 \mid X_{1}=x_{1}, X_{2}=x_{2}\right)=\frac{\exp \left(x_{1}+2 x_{2}\right)}{1+\exp \left(x_{1}+2 x_{2}\right)}.
\]
In this design, the square root of the efficiency lower bound for estimating ATE \citep{hahn1998role}, $\sigma^{\rm eff}$, is $2.473$. 

{\it Design 2:} Everything is identical to Design 1, except that now we generate potential outcomes as follows,
\[
Y(0)=2X_{1}+4X_{2}+U_{0} ~~~{\rm and}~~~  Y(1)=5-X_{1}-2X_{2}+U_{1}.
\]
In this design, the square root of the efficiency lower bound for estimating ATE \citep{hahn1998role}, $\sigma^{\rm eff}$, is $2.863$. Of note, in this setting, the asymptotic variance of $\hat\tau(\bar\theta_N)$ achieves the efficiency bound. 

We run simulations for $N=512,1024,2048,4096,8192$ and consider all $M=2^{j}, 0\leq j\leq \lfloor \frac{\log_2(N)}{2}\rfloor$, with $\lfloor \cdot\rfloor$ representing the floor function. The simulation is replicated for 2000 times, and we report the according mean absolute error (MAE), the root mean squared error (RMSE), the $95\%$ and $90\%$ coverage rates, and the normalized (scaled by $\sqrt{N}$) standard deviation (NSD) of $\hat\tau_N(\hat\theta_N)$, with $\hat\theta_N$ estimated by the standard logistic regression. For constructing the confidence interval, we use the variance estimator with $L$ and $Q$ set to be 4 and $\left[N^{1/3}\right]$, respectively.

Tables \ref{tab:1} and \ref{tab:2} display all the simulation results. It is ready to check that, in both cases, the RMSE and MAE could decrease as $M$ increases, while still maintaining a reasonable coverage rates. In addition, in Design 2, the NSD is closely matching the efficiency lower bound. All these empirical results support our theoretical findings.



The online codes to replicate all our simulation results are available on \\
\url{https://github.com/yihui-he2001/Propensity-score-matching-with-diverging-M}.


\begin{table}
\centering
\caption{Simulation results under Design 1, $\sigma^{\rm eff}=2.473$.}
\begin{tabular}{ccccccc}
  \hline
$N$ & $M$ & RMSE & MAE & $95\%$ Coverage & $90\%$ Coverage & NSD \\ 
  \hline
  512 & 1 & 0.141 & 0.112 & 0.950 & 0.901 & 3.189 \\ 
  & 2 & 0.126 & 0.100 & 0.951 & 0.902 & 2.856 \\ 
  & 4 & 0.120 & 0.095 & 0.948 & 0.899 & 2.714 \\ 
  & 8 & 0.117 & 0.092 & 0.945 & 0.895 & 2.631 \\ 
  & 16 & 0.118 & 0.093 & 0.934 & 0.887 & 2.591 \\
  \hline
  1024 & 1 & 0.100 & 0.080 & 0.952 & 0.901 & 3.212 \\ 
  & 2 & 0.089 & 0.071 & 0.952 & 0.907 & 2.851 \\ 
  & 4 & 0.084 & 0.067 & 0.955 & 0.905 & 2.689 \\ 
  & 8 & 0.082 & 0.065 & 0.953 & 0.900 & 2.601 \\ 
  & 16 & 0.081 & 0.064 & 0.947 & 0.892 & 2.561 \\ 
  & 32 & 0.085 & 0.067 & 0.938 & 0.875 & 2.555 \\
  \hline
  2048 & 1 & 0.072 & 0.058 & 0.950 & 0.896 & 3.252 \\ 
  & 2 & 0.064 & 0.052 & 0.953 & 0.898 & 2.911 \\ 
  & 4 & 0.060 & 0.048 & 0.949 & 0.906 & 2.721 \\ 
  & 8 & 0.058 & 0.046 & 0.956 & 0.905 & 2.606 \\ 
  & 16 & 0.057 & 0.045 & 0.951 & 0.900 & 2.559 \\ 
  & 32 & 0.057 & 0.046 & 0.951 & 0.890 & 2.528 \\
  \hline
  4096 & 1 & 0.051 & 0.041 & 0.950 & 0.890 & 3.239 \\ 
  & 2 & 0.046 & 0.037 & 0.946 & 0.887 & 2.953 \\ 
  & 4 & 0.043 & 0.035 & 0.946 & 0.898 & 2.773 \\ 
  & 8 & 0.042 & 0.033 & 0.946 & 0.902 & 2.665 \\ 
  & 16 & 0.041 & 0.033 & 0.950 & 0.892 & 2.618 \\ 
  & 32 & 0.041 & 0.033 & 0.941 & 0.891 & 2.599 \\ 
  & 64 & 0.042 & 0.034 & 0.933 & 0.873 & 2.576 \\
  \hline
  8192 & 1 & 0.036 & 0.029 & 0.948 & 0.896 & 3.269 \\ 
  & 2 & 0.033 & 0.026 & 0.946 & 0.889 & 2.950 \\ 
  & 4 & 0.030 & 0.024 & 0.949 & 0.890 & 2.735 \\ 
  & 8 & 0.030 & 0.024 & 0.945 & 0.890 & 2.678 \\ 
  & 16 & 0.029 & 0.023 & 0.949 & 0.892 & 2.621 \\ 
  & 32 & 0.029 & 0.023 & 0.948 & 0.891 & 2.612 \\ 
  & 64 & 0.029 & 0.024 & 0.945 & 0.887 & 2.604 \\ 
   \hline
\end{tabular}\label{tab:1}
\end{table}

\begin{table}
\caption{Simulation results under Design 2, $\sigma^{\rm eff}=2.863$.}
\centering
\begin{tabular}{cccccccc}
  \hline
$N$ & $M$ & RMSE & MAE & $95\%$ Coverage & $90\%$ Coverage & NSD \\ 
  \hline
  512 & 1 & 0.140 & 0.112 & 0.952 & 0.902 & 3.159 \\ 
  & 2 & 0.134 & 0.108 & 0.953 & 0.904 & 3.030 \\ 
  & 4 & 0.131 & 0.106 & 0.953 & 0.896 & 2.969 \\ 
  & 8 & 0.130 & 0.105 & 0.957 & 0.897 & 2.931 \\ 
  & 16 & 0.129 & 0.105 & 0.951 & 0.896 & 2.909 \\
  \hline
  1024 & 1 & 0.099 & 0.079 & 0.952 & 0.898 & 3.153 \\ 
  & 2 & 0.095 & 0.076 & 0.947 & 0.904 & 3.028 \\ 
  & 4 & 0.092 & 0.073 & 0.952 & 0.899 & 2.939 \\ 
  & 8 & 0.091 & 0.072 & 0.956 & 0.900 & 2.896 \\ 
  & 16 & 0.090 & 0.072 & 0.953 & 0.895 & 2.882 \\ 
  & 32 & 0.091 & 0.073 & 0.947 & 0.891 & 2.888 \\
  \hline
  2048 & 1 & 0.071 & 0.056 & 0.945 & 0.892 & 3.198 \\ 
  & 2 & 0.067 & 0.053 & 0.950 & 0.895 & 3.039 \\ 
  & 4 & 0.065 & 0.052 & 0.950 & 0.902 & 2.943 \\ 
  & 8 & 0.064 & 0.051 & 0.949 & 0.907 & 2.897 \\ 
  & 16 & 0.064 & 0.051 & 0.953 & 0.904 & 2.881 \\ 
  & 32 & 0.064 & 0.051 & 0.950 & 0.902 & 2.886 \\
  \hline
  4096 & 1 & 0.049 & 0.038 & 0.954 & 0.901 & 3.107 \\ 
  & 2 & 0.047 & 0.037 & 0.951 & 0.905 & 3.004 \\ 
  & 4 & 0.046 & 0.036 & 0.950 & 0.903 & 2.926 \\ 
  & 8 & 0.045 & 0.036 & 0.950 & 0.900 & 2.901 \\ 
  & 16 & 0.045 & 0.036 & 0.948 & 0.905 & 2.895 \\ 
  & 32 & 0.045 & 0.036 & 0.946 & 0.900 & 2.899 \\ 
  & 64 & 0.046 & 0.036 & 0.944 & 0.895 & 2.896 \\ 
  \hline
  8192 & 1 & 0.035 & 0.028 & 0.956 & 0.904 & 3.156 \\ 
  & 2 & 0.033 & 0.026 & 0.954 & 0.906 & 2.984 \\ 
  & 4 & 0.032 & 0.026 & 0.953 & 0.911 & 2.895 \\ 
  & 8 & 0.032 & 0.025 & 0.951 & 0.915 & 2.864 \\ 
  & 16 & 0.031 & 0.025 & 0.954 & 0.911 & 2.849 \\ 
  & 32 & 0.031 & 0.025 & 0.954 & 0.911 & 2.841 \\ 
  & 64 & 0.031 & 0.025 & 0.955 & 0.906 & 2.841 \\ 
   \hline
\end{tabular}\label{tab:2}
\end{table}

\section{Main proofs}\label{sec:main_proof}

Although in the main text we do not assume all components of $X$ to be continuous, we can analyze the problem conditional on the values of the discrete components of $X$. In each conditional case, $X$ can be seen as continuous. Therefore, in the following proofs we will only consider the case of a fully continuous $X$.

\subsection{Proof of Theorem \ref{thm:1}}
\begin{proof}[Proof of Theorem \ref{thm:1}]
We use the following proposition in this proof.
\begin{proposition}[A modified version of Proposition 2 in \cite{abadie2016matching}]\label{prop:esti}
Let  $\Lambda_{N}\left(\theta \mid \theta^{\prime}\right)$ be the difference between the values of the log-likelihood function evaluated at  $\theta$  and the value at  $\theta'$:
\begin{align*}
&\Lambda_{N}\left(\theta \mid \theta^{\prime}\right):=L\left(\theta \mid Z_{N, 1}, \ldots, Z_{N, N}\right)-L\left(\theta^{\prime} \mid Z_{N, 1}, \ldots, Z_{N, N}\right).
\end{align*}
Assume that Assumptions \ref{assumption:1}-\ref{assumption:4} hold. Assume further that $M=O(N^v)$ for some $v<1/2$ and $M\to\infty$ as $N\to\infty$. Then, under $\operatorname{P}_{\theta_{N}}$,
\begin{align*}
&\begin{array}{l}
\left(\begin{array}{c}
\sqrt{N}\left(\widehat{\tau}_{N}\left(\theta_{N}\right)-\tau\right) \\
\sqrt{N}\left(\widehat{\theta}_{N}-\theta_{N}\right) \\
\Lambda_{N}\left(\theta^{*} \mid \theta_{N}\right)
\end{array}\right) \stackrel{d}{\rightarrow} N\left(\left(\begin{array}{c}
0 \\
0 \\
-h^{\prime} I_{\theta^{*}} h / 2
\end{array}\right),\left(\begin{array}{ccc}
\sigma^{2} & c^{\prime} I_{\theta^{*}}^{-1} & -c^{\prime} h \\
I_{\theta^{*}}^{-1} c & I_{\theta^{*}}^{-1} & -h \\
-h^{\prime} c & -h^{\prime} & h^{\prime} I_{\theta^{*}} h
\end{array}\right)\right),
\end{array}&
\end{align*}
where we recall that
\begin{align*}
\ c=& \E\left[\left(\frac{\operatorname{Cov}\left(X, \mu(1, X) \mid p(X)\right)}{p(X)}+\frac{\operatorname{Cov}\left(X, \mu(0, X) \mid p(X)\right)}{1-p(X)}\right) f(X'\theta^*)\right]&
\end{align*}
and 
\begin{align*}
\sigma^{2}= & \E\left[(\bar{\mu}(1, p(X))-\bar{\mu}(0, p(X))-\tau)^{2}\right]+\E\left[\frac{\bar{\sigma}^{2}(1, p(X))}{p(X)}\right]+\E\left[\frac{\bar{\sigma}^{2}(0, p(X))}{1-p(X)}\right].&
\end{align*}
\end{proposition}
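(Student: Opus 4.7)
The plan is to establish an asymptotically linear representation of each of the three coordinates as a single i.i.d. sum (plus a negligible remainder), then invoke the multivariate CLT and Cramér--Wold to obtain joint normality. This closely mirrors the strategy of \cite{abadie2016matching}, but every step must be reworked to accommodate $M\to\infty$.

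The log-likelihood-ratio and MLE coordinates are standard. Assumption \ref{assumption:3} guarantees that the logistic-type model is differentiable in quadratic mean, so the LAN expansion gives, under $P_{\theta^*}$ (and hence, by contiguity, under $P_{\theta_N}$),
\begin{align*}
\Lambda_N(\theta^* \mid \theta_N) = -\frac{h'}{\sqrt{N}}\sum_{i=1}^N s(Z_i;\theta^*) + \tfrac{1}{2}h'I_{\theta^*}h + o_P(1),
\end{align*}
where $s(z;\theta) := f(x'\theta)\{w-F(x'\theta)\}x/[F(x'\theta)(1-F(x'\theta))]$. The MLE satisfies $\sqrt{N}(\hat\theta_N - \theta_N) = I_{\theta^*}^{-1} N^{-1/2}\sum_i s(Z_i;\theta_N) + o_{P_{\theta_N}}(1)$ by standard consistency-plus-Taylor arguments under Assumption \ref{assumption:3}. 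A short computation, using $\theta_N = \theta^* + h/\sqrt{N}$ and Le Cam's third lemma, reduces everything to joint moments of the score sum $N^{-1/2}\sum_i s(Z_i;\theta^*)$ under $P_{\theta_N}$, which has mean $I_{\theta^*}h$ and variance $I_{\theta^*}$.

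The substantive work is the matching coordinate, for which I would adopt the decomposition
\begin{align*}
\sqrt{N}(\hat\tau_N(\theta_N) - \tau) = D_N + B_N + E_N,
\end{align*}
where $D_N := N^{-1/2}\sum_i[\bar\mu_{\theta_N}(1, p(X_i;\theta_N)) - \bar\mu_{\theta_N}(0, p(X_i;\theta_N)) - \tau]$ is a pure i.i.d.\ signal, $B_N$ collects the Lipschitz bias across matches of $\bar\mu_{\theta_N}(1-W_i,\cdot)$, and $E_N := N^{-1/2}\sum_i(2W_i-1)(1 + K_{M,\theta_N}(i)/M)\tilde\varepsilon_i$ with $\tilde\varepsilon_i := Y_i - \bar\mu_{\theta_N}(W_i, p(X_i;\theta_N))$. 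The novelty relative to the fixed-$M$ analysis is twofold. First, $|B_N|$ is controlled, via the global Lipschitz bound in Assumption \ref{assumption:3}(iv)(a), by $N^{-1/2}\sum_i M^{-1}\sum_{j\in \mathcal{J}_{M,\theta_N}(i)}|p(X_i;\theta_N)-p(X_j;\theta_N)|$; under Assumption \ref{assumption:1}(ii) the typical $M$-th nearest-neighbor gap in propensity score is $O_P(M/N)$, giving $|B_N| = O_P(M/\sqrt{N}) = o_P(1)$ precisely when $M = O(N^v)$ with $v<1/2$. Second, conditionally on $\mathbf{X} := \{(X_j,W_j)\}_{j=1}^N$, $E_N$ is a weighted sum of independent mean-zero residuals with conditional variance $N^{-1}\sum_i(1+K_{M,\theta_N}(i)/M)^2 \bar\sigma^2_{\theta_N}(W_i,p(X_i;\theta_N))$. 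A Riemann-sum argument (Lemma \ref{lem:5}) using Assumption \ref{assumption:3}(iv)(b), combined with the concentration of $K_{M,\theta_N}(i)/M$ to $(1-p(X_i))/p(X_i)$ (if $W_i=1$) or $p(X_i)/(1-p(X_i))$ (if $W_i=0$)---provable via the exchangeable counting identity behind Abadie--Imbens's Lemma 5---drives this conditional variance in probability to $\E[\bar\sigma^2(1,p(X))/p(X)] + \E[\bar\sigma^2(0,p(X))/(1-p(X))]$. A conditional Lindeberg CLT then gives asymptotic normality of $E_N$ given $\mathbf{X}$, and since $D_N$ is $\mathbf{X}$-measurable, the pair $(D_N,E_N)$ is asymptotically jointly normal with independent components summing to total variance $\sigma^2$.

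Finally, I would compute the cross-covariances with the score via the same i.i.d.\ representations. The contribution from $D_N$ vanishes because $\E[s(Z;\theta^*)\mid p(X)]=0$. The contribution from $E_N$ is extracted by splitting $\tilde\varepsilon_i = [Y_i-\mu(W_i,X_i)] + [\mu(W_i,X_i)-\bar\mu(W_i,p(X_i))]$: the first bracket dies by $\E[\cdot\mid W,X] = 0$, while the second, combined with the limits $(1+K_{M,\theta_N}(i)/M)(2W_i-1) \to (2W_i-1)\{p(X_i)^{W_i}(1-p(X_i))^{1-W_i}\}^{-1}$ and the explicit form of $s$, evaluates after iterated conditioning on $p(X)$ to exactly the vector $c$. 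Combined with the classical Fisher-information identities for the MLE and likelihood-ratio coordinates, this pins down the entire covariance matrix in the statement. The principal obstacle is therefore the $E_N$ step: uniform concentration of the match-count $K_{M,\theta_N}(i)/M$ together with a uniform Riemann-sum convergence, all carried out under the triangular-array law $P_{\theta_N}$---which is where Assumption \ref{assumption:4} is indispensable, permitting the conditional expectations and variances at $\theta_N$ to be asymptotically identified with those at $\theta^*$.
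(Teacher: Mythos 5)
Your overall architecture---signal plus matching bias plus weighted residual, Lipschitz control of the bias under $M=O(N^v)$, concentration of $K_{M,\theta_N}(i)/M$, and a Riemann-sum identification of the limiting variance---is the same as the paper's (its $D_N(\theta_N)$ in Lemma \ref{lem:10} is your $D_N+E_N$, and its $R_N(\theta_N)$ in Lemma \ref{lem:3} is your $B_N$). However, there is a genuine gap in the central limit step for $E_N$. You claim that conditionally on $\mathbf{X}=\{(X_j,W_j)\}$ the residuals $\tilde\varepsilon_i=Y_i-\bar\mu_{\theta_N}(W_i,p(X_i;\theta_N))$ are independent and mean zero with conditional variance $\bar\sigma^2_{\theta_N}(W_i,p(X_i;\theta_N))$. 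This is false: $\E[\tilde\varepsilon_i\mid X_i,W_i]=\mu(W_i,X_i)-\bar\mu_{\theta_N}(W_i,p(X_i;\theta_N))\neq 0$, and the conditional variance given $X_i$ is $\sigma^2(W_i,X_i)$, not $\bar\sigma^2$. The statement you wrote is only correct if you condition on the coarser field generated by $(p(X_j;\theta_N),W_j)_j$; but then the score $\Delta_N(\theta_N)$ is no longer measurable with respect to the conditioning field, so the ``$E_N$ is conditionally normal with deterministic variance, hence asymptotically independent of everything measurable'' argument would force $\mathrm{Cov}(E_N,\Delta_N)\to 0$, contradicting the nonzero cross term $c$ that you yourself compute two sentences later from the bracket $\mu(W_i,X_i)-\bar\mu(W_i,p(X_i))$. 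That bracket is precisely the nonvanishing conditional mean your CLT setup assumed away, and it is moreover coupled across $i$ through the weights $K_{M,\theta_N}(i)/M$, so it cannot be absorbed into an i.i.d.\ sum without first invoking the concentration results (Lemma \ref{lem:9}) and controlling the substitution error.

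The paper resolves this by avoiding a single two-level conditioning altogether: following \cite{abadie2016matching}, it writes $z_1D_N(\theta_N)+z_2'\Delta_N(\theta_N)=\sum_{k=1}^{3N}\xi_k$ as a martingale array that sequentially reveals $(p(X_i;\theta_N),W_i)$, then $X_i$, then $Y_i$, and applies the martingale CLT. The middle block is exactly where both the $\E[\Var(\mu(w,X)\mid p(X))/\cdots]$ contribution to $\sigma^2$ and the cross term $c$ arise, and the Lyapunov condition for diverging $M$ is verified via the uniform moment bounds $\E[(K_{M,\theta_N}(i)/M)^k\mid W_i,N_0]=O(1)$ of Lemma \ref{lem:7} (together with the restriction to the event $\{C_1N\le N_0\le C_2N\}$). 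To repair your proof you would need either to adopt this three-block scheme or to explicitly split $E_N=A_N+(E_N-A_N)$ with $A_N$ the $\mathbf{X}$-measurable conditional mean, prove a separate joint CLT for $(D_N+A_N,\Delta_N)$ after replacing $K_{M,\theta_N}(i)/M$ by its limit, and only then condition out the outcome noise; as written, the argument does not go through.
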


Get back to the proof. As in the proof of Lemma 1 in \cite{abadie2016matching},  Condition (ULAN) in \cite{andreou2012alternative} holds. Also, Condition (AN) in \cite{andreou2012alternative} holds due to our Proposition \ref{prop:esti} above. So, we can use Theorem 3.2 in \cite{andreou2012alternative} to obtain the result of this theorem.
\end{proof}
\begin{proof}[Proof of Proposition~\ref{prop:esti}]
The proof is based on the following three lemmas.
\begin{lemma}[Lemma 1 of \cite{abadie2016matching}]\label{lem:1}
Let  $\Delta_{N}(\theta)$  be the normalized score function, i.e., the central sequence,
\begin{align*}
\Delta_{N}(\theta) & :=\frac{1}{\sqrt{N}} \frac{\partial}{\partial \theta} L\left(\theta \mid Z_{N, 1}, \ldots, Z_{N, N}\right) \notag\\
&=\frac{1}{\sqrt{N}} \sum_{i=1}^{N} X_{i} \frac{W_{i}-p(X_i;\theta)}{p(X_i;\theta)\left(1-p(X_i;\theta)\right)} f\left(X_{i}^{\prime} \theta\right).
\end{align*}
Assume Assumptions \ref{assumption:3}(i) and (ii) hold. Then the following are true.
 \begin{enumerate}[itemsep=-.5ex,label=(\roman*)]
\item We have
\begin{align*}
&\Lambda_{N}\left(\theta^{*} \mid \theta_{N}\right) = -h^{\prime} \Delta_{N}\left(\theta_{N}\right)-\frac{1}{2} h^{\prime} I_{\theta^{*}} h+o_{\operatorname{P}_{\theta_{N}}}(1).&
\end{align*}
\item Under  $\operatorname{P}_{\theta_{N}}$, we have
\begin{align*}
&\Delta_{N}\left(\theta_{N}\right) \stackrel{d}{\rightarrow} N\left(0, I_{\theta^{*}}\right).&
\end{align*}
\item For the MLE estimator $\widehat{\theta}_N$ of $\theta_N$, we have
\begin{align*}
&\sqrt{N}\left(\widehat{\theta}_{N}-\theta_{N}\right) = I_{\theta^{*}}^{-1} \Delta_{N}\left(\theta_{N}\right)+o_{\operatorname{P}_{\theta_N}}(1).&
\end{align*}
\end{enumerate}
\end{lemma}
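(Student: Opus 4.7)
The three statements together constitute a standard LAN/regular-estimator package for the smooth parametric family \eqref{eq:ps-base}, so my plan is to prove part (ii) first, use it to deduce (i) via a quadratic expansion of the log-likelihood, and finally combine (i) and (ii) through a Newton-step argument for (iii).

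For part (ii), under $\P_{\theta_N}$ the summands of $\Delta_N(\theta_N)$ are i.i.d.\ with conditional mean $\E_{\theta_N}[W_i-p(X_i;\theta_N)\mid X_i]=0$ and covariance matrix equal to $I_{\theta_N}$. Since $X$ has bounded support, $f$ and $F$ are continuous with $p(x;\theta)$ bounded away from $0$ and $1$ on a neighborhood of $\theta^*$, and $\theta_N\to\theta^*$, we get $I_{\theta_N}\to I_{\theta^*}$ and uniform boundedness of the summands. The Lindeberg--Feller theorem then delivers $\Delta_N(\theta_N)\stackrel{d}{\to}N(0,I_{\theta^*})$ under $\P_{\theta_N}$.

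For part (i), I would Taylor-expand $L(\cdot\mid Z_{N,1},\ldots,Z_{N,N})$ around $\theta_N$ and evaluate at $\theta^*=\theta_N-h/\sqrt{N}$, obtaining
\begin{align*}
\Lambda_N(\theta^*\mid\theta_N)=-\frac{h'}{\sqrt{N}}\nabla L(\theta_N)+\frac{1}{2N}h'\nabla^2 L(\tilde\theta_N)h
\end{align*}
for some $\tilde\theta_N$ on the segment between $\theta^*$ and $\theta_N$. The linear term equals $-h'\Delta_N(\theta_N)$. The Hessian $-\nabla^2 L/N$ is an average of bounded i.i.d.\ quantities that are continuous in $\theta$, so a uniform law of large numbers on a compact neighborhood of $\theta^*$ gives that $-\nabla^2 L(\tilde\theta_N)/N$ converges in $\P_{\theta_N}$-probability to $I_{\theta^*}$, yielding the LAN expansion. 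For part (iii), the MLE $\hat\theta_N$ is consistent under $\P_{\theta_N}$ by standard M-estimator arguments (the criterion is smooth and identifiable by Assumption \ref{assumption:3}(i)); expanding $0=\nabla L(\hat\theta_N)$ around $\theta_N$ and inverting the asymptotically nonsingular Hessian then gives $\sqrt{N}(\hat\theta_N-\theta_N)=I_{\theta^*}^{-1}\Delta_N(\theta_N)+o_{\P_{\theta_N}}(1)$.

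The main obstacle is executing these classical expansions under the drifting measure $\P_{\theta_N}$ rather than $\P_{\theta^*}$. This is handled by noting that the integrands entering the score, Hessian, and Fisher information are continuous in $\theta$ at $\theta^*$ and uniformly bounded on a neighborhood of $\theta^*$ (via Assumption \ref{assumption:3}), so all expectations and laws of large numbers pass through the $1/\sqrt{N}$-perturbation with no additional contiguity machinery. Since the claims here concern only the log-likelihood and its derivatives---quantities that do not involve the matching index $M$---the diverging-$M$ regime plays no role, and the argument is essentially a restatement of Lemma 1 in \cite{abadie2016matching}.
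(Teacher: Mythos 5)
Your overall package (Lindeberg--Feller for (ii), a quadratic log-likelihood expansion for (i), a Newton step for (iii)) is the right classical skeleton, and the paper itself offers no independent argument --- its ``proof'' is a one-line citation to Lemma~1 of \cite{abadie2016matching}. But there is a concrete gap in how you execute (i) and (iii): your Taylor expansion with Lagrange remainder, $\Lambda_N(\theta^*\mid\theta_N)=-h'\Delta_N(\theta_N)+\tfrac{1}{2N}h'\nabla^2L(\tilde\theta_N)h$, and likewise the expansion of $0=\nabla L(\hat\theta_N)$ around $\theta_N$, both require the Hessian $\nabla^2 L$ to exist, i.e.\ $F$ to be twice differentiable. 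Assumption~\ref{assumption:3}(ii) only grants that $F$ is continuously differentiable with continuous derivative $f$; no differentiability of $f$ is assumed, so the Hessian of $W\log F(X'\theta)+(1-W)\log\{1-F(X'\theta)\}$ need not exist and the intermediate-point argument (and the subsequent uniform LLN for $-\nabla^2L/N$) cannot be run as stated.

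The route actually used by \cite{abadie2016matching} --- and explicitly relied upon elsewhere in this paper (see Part~I of the proof of Theorem~\ref{thm:var}, which invokes Proposition~2.1.2 of \cite{bickel1993efficient}) --- is to verify differentiability in quadratic mean of the model $\theta\mapsto\P_\theta$, which is a first-order $L^2$ condition needing only $F\in C^1$ with $f>0$ and the boundedness of $X$. That proposition delivers the LAN expansion (i) directly, with the quadratic term $-\tfrac12h'I_{\theta^*}h$ coming from the Fisher information rather than from an empirical Hessian, and statement (iii) then follows from the standard asymptotic linearity of the MLE under LAN with drifting parameters (e.g., van der Vaart's treatment of regular estimators), again without second derivatives. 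Your part (ii) is fine as written: the summands are i.i.d., bounded, mean zero under $\P_{\theta_N}$ (since only the conditional law of $W$ given $X$ shifts while the law of $X$ is fixed), with covariance $I_{\theta_N}\to I_{\theta^*}$, so Lindeberg--Feller applies. To repair (i) and (iii) you should either add twice-differentiability of $F$ as an extra hypothesis, or replace the Taylor/Hessian argument by the quadratic-mean-differentiability machinery.
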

\begin{lemma}\label{lem:3}
Assume Assumptions \ref{assumption:1}-\ref{assumption:4} hold. Assume further that $M=O(N^v)$ for some $v<1/2$. Choose $C_1$ and $C_2$ satisfying $0<C_1<1-\bar{p}$ and $1-\underline{p}<C_2<1$. Then, under $\operatorname{P}_{\theta_{N}}$, for
\begin{align*}
    R_{N}(\theta_N):= & \frac{\ind(C_1N\leq N_0 \leq C_2N)}{\sqrt{N}} \sum_{i=1}^{N}\left(2 W_{i}-1\right)&\notag\\
&\times\left(\bar{\mu}_{\theta_N}\left(1-W_{i}, p(X_{i};\theta_N)\right)-\frac{1}{M} \sum_{j \in \mathcal{J}_{M,\theta_N}(i)} \bar{\mu}_{\theta_N}\left(1-W_{i}, p(X_{j};\theta_N)\right)\right)&\notag\\
&-\frac{1-\ind(C_1N\leq N_0 \leq C_2N)}{\sqrt{N}} \sum_{i=1}^{N}\left(\bar{\mu}_{\theta_N}\left(1, p(X_{i};\theta_N)\right)-\bar{\mu}_{\theta_N}\left(0, p(X_{i};\theta_N)\right)\right),
\end{align*}
we have
\begin{align*}
R_{N}(\theta_N)=o_{\operatorname{P}_{\theta_N}}(1).
\end{align*}
\end{lemma}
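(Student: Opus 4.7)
The plan is to decompose $R_N(\theta_N) = R_N^{(1)}(\theta_N) + R_N^{(2)}(\theta_N)$ into its two summands and treat them separately. Write $A_N := \ind(C_1 N \le N_0 \le C_2 N)$, so that $R_N^{(1)}$ carries the factor $A_N$ and $R_N^{(2)}$ carries $1 - A_N$. Under $\P_{\theta_N}$, the law of large numbers gives $N_0/N \to 1 - q_{\theta^*}$ in probability, and since $q_{\theta^*} = \E[p(X;\theta^*)] \in [\underline{p}, \bar{p}]$ by Assumption \ref{assumption:1}(ii), the choice of $C_1, C_2$ puts $1 - q_{\theta^*}$ strictly in $(C_1, C_2)$; hence $\P_{\theta_N}(A_N = 1) \to 1$. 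Assumption \ref{assumption:3}(iv)(c) combined with Jensen's inequality supplies a uniform bound on $|\bar{\mu}_{\theta_N}(w, p)|$, so $|R_N^{(2)}(\theta_N)| \le (1 - A_N) \cdot \sqrt{N} \cdot O(1)$, which is $o_{\P_{\theta_N}}(1)$.

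For $R_N^{(1)}(\theta_N)$, since $\theta_N = \theta^* + h/\sqrt{N} \to \theta^*$, the global Lipschitz bound in Assumption \ref{assumption:3}(iv)(a) eventually applies to $\theta = \theta_N$; the triangle inequality then yields
\begin{align*}
|R_N^{(1)}(\theta_N)| \le \frac{C_{\bar{\mu}}}{\sqrt{N}} \sum_{i=1}^N \frac{1}{M} \sum_{j \in \mathcal{J}_{M,\theta_N}(i)} \bigl|p(X_i;\theta_N) - p(X_j;\theta_N)\bigr|.
\end{align*}
The problem thus reduces to controlling the average $M$-nearest-neighbor matching distance in the one-dimensional propensity score, measured across treatment groups. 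Under $\P_{\theta_N}$, the conditional density of $p(X;\theta_N)$ given $W = w$ is proportional to $\bigl[p\cdot\ind(w=1) + (1-p)\cdot\ind(w=0)\bigr]$ times the unconditional density of $p(X;\theta_N)$; Assumption \ref{assumption:1}(ii) gives two-sided bounds on the unconditional density on $[\underline{p}, \bar{p}]$, and the continuity of $F$ and $f$ in Assumption \ref{assumption:3}(ii) transfers those bounds uniformly to all $\theta_N$ sufficiently close to $\theta^*$. On $\{A_N = 1\}$ both groups have size $\Theta(N)$, and a standard one-dimensional order-statistic computation gives, uniformly in $i$,
\begin{align*}
\E_{\theta_N}\!\left[\frac{1}{M}\sum_{j\in\mathcal{J}_{M,\theta_N}(i)} \bigl|p(X_i;\theta_N) - p(X_j;\theta_N)\bigr|\right] = O\!\left(\frac{M}{N}\right).
\end{align*}
Averaging over $i$ and dividing by $\sqrt{N}$ yields $R_N^{(1)} = O_{\P_{\theta_N}}(M/\sqrt{N}) = O_{\P_{\theta_N}}(N^{v-1/2}) = o_{\P_{\theta_N}}(1)$ since $v < 1/2$.

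The main obstacle will be the uniform 1D $M$-NN distance bound near the boundary of the support: a unit whose propensity score lies close to $\underline{p}$ or $\bar{p}$ may have all $M$ opposite-group neighbors on the same side, so a two-sided symmetric-interval concentration argument does not directly apply. A one-sided version of the argument still delivers the $O(M/N)$ bound thanks to the density lower bound on $[\underline{p}, \bar{p}]$, but this boundary case must be handled carefully and uniformly in $i$. A secondary concern is ensuring that the relevant density and Lipschitz controls transfer uniformly from $\theta^*$ to the sequence $\theta_N$; this is granted by the continuity assumptions in \ref{assumption:3}(ii) and the global nature of the Lipschitz constant $C_{\bar{\mu}}$ in \ref{assumption:3}(iv)(a).
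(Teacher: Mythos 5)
Your overall structure matches the paper's: drop the off-event term using $N_0/N \to 1-q_{\theta^*} \in (C_1,C_2)$, then use the global Lipschitz constant $C_{\bar\mu}$ of Assumption \ref{assumption:3}(iv)(a) to reduce everything to the average matched propensity-score discrepancy $\frac{1}{\sqrt N}\sum_i \frac{1}{M}\sum_{j\in\mathcal J_{M,\theta_N}(i)}|p(X_i;\theta_N)-p(X_j;\theta_N)|$. Where you diverge is in how that quantity is controlled. The paper invokes its Lemma \ref{lem:2}, a cell-occupancy argument: partition the support into $K=\lfloor N_0^{1/2}\log N_0\rfloor$ intervals of \emph{equal probability under the control-group law}, show via a union/Chernoff bound that with high probability every cell contains at least $M$ control observations (this is where $M=O(N^v)$, $v<1/2$, enters), and conclude a per-unit bound of order $\bar r/K \asymp 1/(\sqrt N\log N)$, so the total is $O(1/\log N)$. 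You instead propose a direct order-statistics computation giving a per-unit expected $M$-NN distance of $O(M/N)$ and a total of $O(M/\sqrt N)=O(N^{v-1/2})$. Your route, when it applies, gives a polynomially better rate and uses the condition on $M$ only at the final summation rather than to guarantee a high-probability event.

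The one substantive concern is the uniform-in-$i$ claim $\E_{\theta_N}[\frac{1}{M}\sum_{j}|p(X_i;\theta_N)-p(X_j;\theta_N)|]=O(M/N)$, which you assert rather than prove. That bound requires the conditional densities of $p(X;\theta_N)$ given $W=w$ to be bounded \emph{below} on their support; Assumption \ref{assumption:1}(ii) only gives continuity of the density on the compact interval $[\underline p,\bar p]$, hence an upper bound, not a lower bound. The paper's equal-probability partition under $F_0$ is engineered precisely to avoid any lower bound on the density: it needs only the density \emph{ratio} $g_1/g_0\le\bar r$, which follows from the explicit formula $\bar r_{\theta_N}(p)=p(1-q_{\theta_N})/\{(1-p)q_{\theta_N}\}$ and $p\in[\underline p,\bar p]$. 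If a density lower bound is available (the paper does assume one implicitly elsewhere, e.g.\ in the proof of Lemma \ref{lem:9}), your argument goes through, including the one-sided treatment at the boundary that you correctly flag; without it, the uniform $O(M/N)$ rate can fail at interior points where the density vanishes, and you would need to replace the uniform bound by an aggregate one. Either state the lower-bound hypothesis explicitly or adopt the equal-probability-cell device.
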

\begin{lemma} \label{lem:10}
Assume Assumptions \ref{assumption:1}-\ref{assumption:4} hold. Assume further that $M\log N/N\to0$ and $M\to\infty$ as $N\to\infty$. Choose $C_1$ and $C_2$ satisfying $0<C_1<1-\bar{p}$ and $1-\underline{p}<C_2<1$. Then, for
\begin{align*}
    D_{N}(\theta_N):= & \frac{1}{\sqrt{N}} \sum_{i=1}^{N}\left(\bar{\mu}_{\theta_N}\left(1, p(X_{i};\theta_N)\right)-\bar{\mu}_{\theta_N}\left(0, p(X_{i};\theta_N)\right)-\tau\right)& \\
& +\frac{\ind(C_1N\leq N_0 \leq C_2N)}{\sqrt{N}} \sum_{i=1}^{N}\left(2 W_{i}-1\right)\left(1+\frac{K_{M, \theta_N}(i)}{M}\right)\left(Y_{i}-\bar{\mu}_{\theta_N}\left(W_{i}, p(X_{i};\theta_N)\right)\right),&
\end{align*}
we have, under $\operatorname{P}_{\theta_{N}}$, 
\begin{align*}
&\left(\begin{array}{l}D_{N}\left(\theta_{N}\right) \\ \Delta_{N}\left(\theta_{N}\right)\end{array}\right) \stackrel{d}{\rightarrow} N\left(\left(\begin{array}{l}0 \\ 0\end{array}\right),\left(\begin{array}{cc}\sigma^{2} & c^{\prime} \\ c & I_{\theta^{*}}\end{array}\right)\right),&
\end{align*}
where we recall that
\begin{align*}
\ c=& \E\left[\left(\frac{\operatorname{Cov}\left(X, \mu(1, X) \mid p(X)\right)}{p(X)}+\frac{\operatorname{Cov}\left(X, \mu(0, X) \mid p(X)\right)}{1-p(X)}\right) f(X'\theta^*)\right]&
\end{align*}and 
\begin{align*}
\sigma^{2}= & \E\left[(\bar{\mu}(1, p(X))-\bar{\mu}(0, p(X))-\tau)^{2}\right]+\E\left[\frac{\bar{\sigma}^{2}(1, p(X))}{p(X)}\right]+\E\left[\frac{\bar{\sigma}^{2}(0, p(X))}{1-p(X)}\right].&
\end{align*}
\end{lemma}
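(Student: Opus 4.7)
My plan is to decompose $D_N(\theta_N)$ into an $\mathcal{F}_X$-measurable part and a conditionally mean-zero part, where $\mathcal{F}_X := \sigma(\{X_i, W_i\}_{i=1}^N)$, establish joint asymptotic normality of each part with $\Delta_N(\theta_N)$ separately, and recombine via conditional characteristic functions. Write $D_N = A_N + B_N$ where
\[
A_N := \frac{1}{\sqrt N}\sum_{i=1}^N \big[\bar\mu_{\theta_N}(1, p(X_i;\theta_N)) - \bar\mu_{\theta_N}(0, p(X_i;\theta_N)) - \tau\big]
\]
is the first (i.i.d.) summand and $B_N$ is the matching-correction summand. Split the residual $\varepsilon_i := Y_i - \bar\mu_{\theta_N}(W_i, p(X_i;\theta_N))$ as $U_i + V_i$ with $U_i := Y_i - \mu(W_i, X_i)$ (conditionally mean zero given $\mathcal{F}_X$) and $V_i := \mu(W_i, X_i) - \bar\mu_{\theta_N}(W_i, p(X_i;\theta_N))$ ($\mathcal{F}_X$-measurable and conditionally mean zero given $(W_i, p(X_i;\theta_N))$), so $B_N = B_N^{(U)} + B_N^{(V)}$; the pieces $A_N$, $B_N^{(V)}$, $\Delta_N$ are all $\mathcal{F}_X$-measurable whereas only $B_N^{(U)}$ carries the outcome randomness.

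The first step is a conditional Lindeberg CLT for $B_N^{(U)}$: given $\mathcal{F}_X$, the summands are independent and mean zero with conditional variance $\frac{1}{N}\sum_{i=1}^N (1 + K_{M,\theta_N}(i)/M)^2\,\sigma^2(W_i, X_i)$. Using a Riemann-sum argument analogous to that behind the preceding Lemma \ref{lem:3} (and to Lemma 5 alluded to in the text), combined with concentration of the matching multipliers $K_{M,\theta_N}(i)/M$ under $M\log N/N \to 0$, this converges in $\P_{\theta_N}$-probability to a deterministic limit giving exactly the $\sigma^2$-part tied to $U$; together with the $V$-contribution from Step 2 this reconstitutes $\E[\bar\sigma^2(1, p(X))/p(X)] + \E[\bar\sigma^2(0, p(X))/(1-p(X))]$ via the law of total variance $\bar\sigma^2(w, p) = \E[\sigma^2(w,X)\mid p(X)=p] + \operatorname{Var}(\mu(w,X)\mid p(X)=p)$. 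Lindeberg's condition follows from Assumption \ref{assumption:3}(iv)(c).

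The second step is joint asymptotic normality of the $\mathcal{F}_X$-measurable triple $(A_N, B_N^{(V)}, \Delta_N(\theta_N))$. The pair $(A_N, \Delta_N)$ is a standard i.i.d.\ sum with vanishing cross-covariance, because $\bar\mu_{\theta_N}$ depends only on $X$ while the score has the factor $W - p(X;\theta_N)$ with conditional mean zero given $X$. For $B_N^{(V)}$, applying the swapping identity $\sum_i K_{M,\theta_N}(i)\,g(X_i, W_i) = \sum_i \sum_{j \in \mathcal{J}_{M,\theta_N}(i)} g(X_j, W_j)$ followed by a $V$-type analogue of Lemma \ref{lem:3}, I would replace $B_N^{(V)}$ up to $o_{\P_{\theta_N}}(1)$ by an i.i.d.\ sum of bounded $(X_i, W_i)$-measurable summands whose covariance with $\Delta_N$ produces exactly the vector $c = \E[(\cov(X,\mu(1,X)\mid p(X))/p(X) + \cov(X,\mu(0,X)\mid p(X))/(1-p(X)))f(X'\theta^*)]$. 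A multivariate Lindeberg--Feller CLT then delivers joint Gaussianity of the triple.

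Finally, the conditional characteristic-function identity
\[
\E\big[\exp(\sfi(tD_N + s'\Delta_N))\big] = \E\big[\exp(\sfi(t(A_N + B_N^{(V)}) + s'\Delta_N))\,\E[\exp(\sfi t B_N^{(U)}) \mid \mathcal{F}_X]\big]
\]
combines the two layers; since the conditional limit of $B_N^{(U)}$ is Gaussian with a deterministic variance independent of the $\mathcal{F}_X$-limit, one obtains the claimed joint Gaussian limit with covariance $\begin{pmatrix}\sigma^2 & c' \\ c & I_{\theta^*}\end{pmatrix}$. The main obstacle is the $B_N^{(V)}$ replacement step: the matching multipliers $K_{M,\theta_N}(i)/M$ induce subtle coupling across $i$ that only dissipates under the rate $M\log N/N \to 0$, and producing the correct cross-covariance $c$ requires carefully tracking how $V_i$, which depends on the full $X_i$ rather than on $p(X_i)$ alone, interacts with the swapped matching averages. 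This is significantly more delicate than the fixed-$M$ martingale argument used in \cite{abadie2016matching}, because with diverging $M$ each unit's residual couples to $\Theta(M)$ neighbours rather than $O(1)$.
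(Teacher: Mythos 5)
Your architecture --- splitting $D_N$ into an $\mathcal{F}_X$-measurable layer $(A_N, B_N^{(V)}, \Delta_N)$ and a conditionally centered layer $B_N^{(U)}$, then gluing the two limits via conditional characteristic functions --- is a legitimate alternative to the paper's route, which instead applies the Cram\'er--Wold device to $z_1 D_N(\theta_N)+z_2'\Delta_N(\theta_N)$, writes it as a $3N$-term martingale difference array exactly as in Abadie--Imbens, verifies a Lyapunov condition via H\"older's inequality and the moment bounds on $K_{M,\theta_N}(i)/M$ (Lemma \ref{lem:7}), and evaluates the three conditional-variance blocks with Lemma \ref{lem:9}. Your $U$-layer and the recombination step are sound and essentially reproduce the third martingale block; the conditional Lindeberg condition and the convergence of $N^{-1}\sum_i(1+K_{M,\theta_N}(i)/M)^2\sigma^2(W_i,X_i)$ do go through with the tools you name.

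The gap is in $B_N^{(V)}$. After swapping, $B_N^{(V)}=N^{-1/2}\sum_i(2W_i-1)\bigl(V_i-M^{-1}\sum_{j\in\mathcal{J}_{M,\theta_N}(i)}V_j\bigr)$, and a ``$V$-type analogue of Lemma \ref{lem:3}'' cannot reduce this to an i.i.d.\ sum. Lemma \ref{lem:3} works because $\bar\mu_{\theta_N}(w,\cdot)$ is Lipschitz in the propensity score, so the matched average cancels the own value up to the maximal match discrepancy. By contrast, $V_j=\mu(W_j,X_j)-\bar\mu_{\theta_N}(W_j,p(X_j;\theta_N))$ depends on the full covariate $X_j$ and has conditional mean zero but non-degenerate conditional variance given $p(X_j;\theta_N)$; hence $M^{-1}\sum_{j\in\mathcal{J}_{M,\theta_N}(i)}V_j$ is only $O_{\P_{\theta_N}}(M^{-1/2})$, does not cancel $V_i$, and after swapping back contributes a term with conditional variance $N^{-1}\sum_j(K_{M,\theta_N}(j)/M)^2\operatorname{Var}(\mu(W_j,X_j)\mid p(X_j;\theta_N),W_j)$, which has a nonzero limit. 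It is exactly this term that generates the $\E[\operatorname{Var}(\mu(1,X)\mid p(X))/p(X)]$ and $\E[\operatorname{Var}(\mu(0,X)\mid p(X))/(1-p(X))]$ pieces of $\sigma^2$ and the entire cross term $c$. The correct replacement is by $N^{-1/2}\sum_i\bigl(W_i/p(X_i;\theta_N)-(1-W_i)/(1-p(X_i;\theta_N))\bigr)V_i$, and justifying it requires $N^{-1}\sum_i\bigl(K_{M,\theta_N}(i)/M-\rho_i\bigr)^2\overset{\P_{\theta_N}}{\to}0$, where $\rho_i$ equals $(1-p(X_i;\theta_N))/p(X_i;\theta_N)$ for treated units and its reciprocal for controls --- i.e., the law of large numbers for the matching weights (the paper's Lemma \ref{lem:9}, resting on Lemmas \ref{lem:5}--\ref{lem:8}). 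This concentration is precisely where $M\to\infty$ enters and why the fixed-$M$ limit has a strictly larger variance; your outline flags this step as ``the main obstacle'' but supplies no argument for it, and the tool you cite would not deliver it.
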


Get back to the proof. Lemma \ref{lem:1} implies that, in order to prove Proposition \ref{prop:esti},  it suffices to show that
\begin{align*}
&\begin{array}{l}
\left(\begin{array}{c}
\sqrt{N}\left(\widehat{\tau}_{N}\left(\theta_{N}\right)-\tau\right) \\
\Delta_N(\theta_{N})
\end{array}\right) \stackrel{d}{\rightarrow} N\left(\left(\begin{array}{c}
0 \\
0 
\end{array}\right),\left(\begin{array}{cc}
\sigma^{2} & c' \\
c & I_{\theta^{*}} 
\end{array}\right)\right)
\end{array}&
\end{align*}
holds under $\operatorname{P}_{\theta_N}$. For any $C_1$ and $C_2$ satisfying $0<C_1<1-\bar{p}$ and $1-\underline{p}<C_2<1$, $C_1N\leq N_0\leq C_2N$ almost surely holds under $\operatorname{P}_{\theta_N}$ as $N\to\infty$, so we can make the following decomposition,
\begin{align*}
\sqrt{N}\left(\widehat{\tau}_{N}\left(\theta_{N}\right)-\tau\right)=D_N(\theta_N)+R_N(\theta_N)+o_{\operatorname{P}_{\theta_N}}(1).
\end{align*}
Then, combining Lemma \ref{lem:3} and Lemma \ref{lem:10} yields the proposition.
\end{proof}
\subsection{Proof of Theorem \ref{thm:var}}
\begin{proof}[Proof of Theorem~\ref{thm:var}]
As a matter of fact, we only have to handle the consistency of $\widehat{\sigma}^2$ since the analyses of $\widehat{c}$ and $\widehat{I}_{\theta^*}$ are the same as the ones in \cite{abadie2016matching}. 

{\bf Part I.} According to Proposition \ref{prop:esti} and Le Cam's third lemma, we know that 
\begin{align*}
    \widehat{\theta}_N-\theta^*\overset{d}{\to} N(0,I_{\theta^*}^{-1})
\end{align*}
holds under $\operatorname{P}_{\theta^*}$. So, for any $\epsilon>0$, there exists a positive integer $H>0$ such that for all sufficiently large $N$,
\begin{align*}
\operatorname{P}\left(\sqrt{N}\|\widehat{\theta}_N-\theta^*\|\geq H\right)<\frac{1}{2}\epsilon
\end{align*}
always holds. Consider {\it all} real vector sequences $\theta_{(N)}$ satisfying  $\sqrt{N}\|\theta_{(N)}-\theta^*\|<H$.  Let $\tilde{\sigma}^2$ be the statistic obtained by replacing all $\hat{\theta}_N$ in $\widehat{\sigma}^2$ with $\theta_{(N)}$. If $\tilde{\sigma}^2\overset{\P}{\to}\sigma^2$ {\it uniformly} for any choice of $\theta_{(N)}$ mentioned above, it follows that $\widehat{\sigma}^2\overset{\P}{\to}\sigma^2$.

It then remains to prove that $\tilde{\sigma}^2\overset{\P}{\to}\sigma^2$ uniformly. As shown in the proof of Lemma 1 in \cite{abadie2016matching}, the conditions of Proposition 2.1.2 in \cite{bickel1993efficient} are satisfied in our problem. Since $\sqrt{N}\|\theta_{(N)}-\theta^*\|<H$ holds for all $\theta_{(N)}$, the uniform convergence in that proposition yields
\begin{align*}
\Lambda_{N}\left(\theta^{*} \mid \theta_{(N)}\right) &= -\sqrt{N}(\theta_{(N)}-\theta^*)^{\prime} \Delta_{N}\left(\theta^*\right)-\frac{1}{2} (\theta_{(N)}-\theta^*)^{\prime} I_{\theta^{*}} (\theta_{(N)}-\theta^*)+o_{\operatorname{P}_{\theta_{(N)}}}(1)&\\
&=O_{\operatorname{P}_{\theta_{(N)}}}(1),
\end{align*}
uniformly in $\theta_{(N)}$. For any statistics $T_N$ of $(Z_{1},\cdots,Z_{N})$ and any Borel set $\mathcal{B}$, we have
\begin{align*}
    \operatorname{P}(T_N\in\mathcal{B})=\E_{\theta_{(N)}}\left[\Lambda_{N}\left(\theta^{*} \mid \theta_{(N)}\right)\ind(T_N\in\mathcal{B})\right].
\end{align*}
Accordingly, we know that the uniform convergence in probability of $\tilde{\sigma}^2$ under $\operatorname{P}_{\theta_{(N)}}$ leads to the uniform convergence in probability of $\tilde{\sigma}^2$ under $\operatorname{P}$.  We thus only have to prove that 
\[
\tilde{\sigma}^2\overset{\operatorname{P}_{\theta_{(N)}}}{\to}\sigma^2 \text{ holds uniformly.}
\]

{\bf Part II.} Without loss of generality, assume that for any $N$ and the $\epsilon$ in Assumption \ref{assumption:3}(iv), $\|\theta_{(N)}-\theta^*\|<\epsilon$ holds. Let 
\[
\epsilon_i:=Y_i-\bar{\mu}_{\theta_{(N)}}(W_i,p(X_i;\theta_{(N)})), 
\]
one can make the following decomposition:
\begin{align*}
    \tilde{\sigma}^2=&\frac{1}{N} \sum_{i=1}^{N}\left(Y_i-\frac{1}{M} \sum_{j \in \mathcal{J}_{M}(i, \theta_{(N)})} Y_j\right)^2-\widehat{\tau}_{N}^2(\theta_{(N)})\notag\\
&+\frac{1}{N} \sum_{i=1}^{N}\left(\left(\frac{K_{M, \theta_{(N)}}(i)}{M}\right)^{2}+\frac{2M-1}{M}\left(\frac{K_{M, \theta_{(N)}}(i)}{M}\right)\right) \widehat{\bar{\sigma}}_{\theta_{(N)}}^{2}\left(W_i, p(X_i)\right)\notag\\
=&-\frac{1}{N} \sum_{i=1}^{N}\left(\bar{\mu}_{\theta_{(N)}}(1-W_i,p(X_i;\theta_{(N)}))-\frac{1}{M} \sum_{j \in \mathcal{J}_{M}(i, \theta_{(N)})} \bar{\mu}_{\theta_{(N)}}(1-W_i,p(X_j;\theta_{(N)}))\right)^2\\
&+\frac{2}{N}\sum_{i=1}^{N}\left(Y_i-\frac{1}{M} \sum_{j \in \mathcal{J}_{M}(i, \theta_{(N)})} Y_j\right)\notag\\
&\times\left(\bar{\mu}_{\theta_{(N)}}(1-W_i,p(X_i;\theta_{(N)}))-\frac{1}{M} \sum_{j \in \mathcal{J}_{M}(i, \theta_{(N)})} \bar{\mu}_{\theta_{(N)}}(1-W_i,p(X_j;\theta_{(N)}))\right)\\
&+\frac{1}{N} \sum_{i=1}^{N}\left(\bar{\mu}_{\theta_{(N)}}(W_i,p(X_i;\theta_{(N)}))-\bar{\mu}_{\theta_{(N)}}(1-W_i,p(X_i;\theta_{(N)}))\right)^2-\widehat{\tau}_{N}^2(\theta_{(N)})\\
&+\frac{1}{N} \sum_{i=1}^{N}\left(\epsilon_i-\frac{1}{M} \sum_{j \in \mathcal{J}_{M}(i, \theta_{(N)})}\epsilon_j\right)^2\\
&+\frac{2}{N}\sum_{i=1}^{N}\left(\epsilon_i-\frac{1}{M} \sum_{j \in \mathcal{J}_{M}(i, \theta_{(N)})}\epsilon_j\right)\left(\bar{\mu}_{\theta_{(N)}}(W_i,p(X_i;\theta_{(N)}))-\bar{\mu}_{\theta_{(N)}}(1-W_i,p(X_i;\theta_{(N)}))\right)\\
&+\frac{1}{N} \sum_{i=1}^{N}\left(\left(\frac{K_{M, \theta_{(N)}}(i)}{M}\right)^{2}+\frac{2M-1}{M}\left(\frac{K_{M, \theta_{(N)}}(i)}{M}\right)\right) \left(\widehat{\bar{\sigma}}^{2}\left(W_i, p(X_i)\right)-\bar{\sigma}_{\theta_{(N)}}^{2}\left(W_i, p(X_i;\theta_{(N)})\right)\right)\\
&+\frac{1}{N} \sum_{i=1}^{N}\left(\left(\frac{K_{M, \theta_{(N)}}(i)}{M}\right)^{2}+\frac{2M-1}{M}\left(\frac{K_{M, \theta_{(N)}}(i)}{M}\right)\right) \bar{\sigma}_{\theta_{(N)}}^{2}\left(W_i, p(X_i;\theta_{(N)})\right)\\
&=: I_1+I_2+I_3+I_4+I_5+I_6+I_7.
\end{align*}

The following proposition controls the terms $I_1$ to $I_7$. 
\begin{proposition}\label{prop:var1}
    Assume Assumptions \ref{assumption:1}-\ref{assumption:4} hold. For a family of $\theta_{(N)}$ s.t. $\theta_{(N)}\to\theta^*$ uniformly, there are uniform convergences as follows.
    \begin{align*}
    &I_i\overset{\operatorname{P}_{\theta_{(N)}}}{\to}0 \ for \ i=1,2,5,6,\\ &I_3\overset{\operatorname{P}_{\theta_{(N)}}}{\to}\E\left[(\bar{\mu}(1,p(X))-\bar{\mu}(0,p(X))-\tau)^2\right],\\ &I_4\overset{\operatorname{P}_{\theta_{(N)}}}{\to}\E\left[\bar{\sigma}^2\left(W, p(X)\right)\right],\\ & I_7\overset{\operatorname{P}_{\theta_{(N)}}}{\to}\E\left[\frac{\bar{\sigma}^2\left(1, p(X)\right)}{p(X)}+\frac{\bar{\sigma}^2\left(0, p(X)\right)}{1-p(X)}\right]-\E\left[\bar{\sigma}^2\left(W, p(X)\right)\right]. 
    \end{align*}
\end{proposition}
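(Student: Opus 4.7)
The plan is to treat the seven terms separately, exploiting that seven out of nine total moments cancel in the usual Abadie--Imbens decomposition, while making sure every bound is uniform in $\theta_{(N)}$ with $\sqrt{N}\|\theta_{(N)}-\theta^{*}\|<H$.

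First I would handle $I_1$ and $I_2$ jointly. By Assumption~\ref{assumption:3}(iv)(a), $\bar{\mu}_{\theta_{(N)}}(w,\cdot)$ is globally Lipschitz with constant $C_{\bar\mu}$ for all $N$ large, so $|I_1|\le C_{\bar\mu}^{2}\cdot N^{-1}\sum_{i}(\tfrac{1}{M}\sum_{j\in\mathcal{J}_{M,\theta_{(N)}}(i)}|p(X_i;\theta_{(N)})-p(X_j;\theta_{(N)})|)^{2}$. The matched-gap average is already bounded in \cite{abadie2016matching} (and extends to diverging $M=O(N^{v})$ with $v<1/2$ via spacings of uniform order statistics on $[\underline p,\bar p]$: the $M$-th nearest gap is $O_{\P}(M/N)$ uniformly, so squared gaps are $O_{\P}(M^{2}/N^{2})$, giving $I_1=O_{\P}(M^{2}/N^{2})=o_{\P}(1)$). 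Then Cauchy--Schwarz gives $|I_2|\le 2\,I_1^{1/2}\cdot\big(\tfrac{1}{N}\sum_i(Y_i-\tfrac{1}{M}\sum_j Y_j)^{2}\big)^{1/2}$, the second factor being $O_{\P}(1)$ by the uniformly bounded fourth-moment condition \ref{assumption:3}(iv)(c), so $I_2=o_{\P}(1)$.

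Next, $I_3$ and $I_7$ are Riemann-sum type averages. For $I_3$, rewrite $\bar\mu_{\theta_{(N)}}(W_i,\cdot)-\bar\mu_{\theta_{(N)}}(1-W_i,\cdot)=(2W_i-1)(\bar\mu_{\theta_{(N)}}(1,\cdot)-\bar\mu_{\theta_{(N)}}(0,\cdot))$; a standard LLN combined with Assumption~\ref{assumption:4} (which promotes convergence of $\bar\mu_{\theta_{(N)}}$ to $\bar\mu$) and the already-established $\widehat\tau_N(\theta_{(N)})\to\tau$ (from Proposition~\ref{prop:esti}) delivers the claimed limit. For $I_7$, condition on the propensity scores and use the formulas $\E[K_{M,\theta_{(N)}}(i)\mid W,p(X;\theta_{(N)})]$ and $\E[K_{M,\theta_{(N)}}(i)^{2}\mid\cdot]$ derived in \cite{abadie2006large,abadie2016matching}; since $M\to\infty$, the ratio $K_{M,\theta_{(N)}}(i)/M$ concentrates around $(1-p(X_i;\theta_{(N)}))/p(X_i;\theta_{(N)})$ on the treatment side and the reciprocal expression on the control side. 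Equicontinuity of $\bar\sigma^{2}_{\theta}$ (Assumption~\ref{assumption:3}(iv)(b)) then converts the sum into a Riemann integral and yields the stated limit.

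For $I_4$ and $I_5$, condition on $\{W_i,X_i\}_{i=1}^{N}$. The $\epsilon_i$'s are then centered with variance $\bar{\sigma}_{\theta_{(N)}}^{2}(W_i,p(X_i;\theta_{(N)}))$ and are independent across $i$. Expanding $I_4$ gives the diagonal $\tfrac{1}{N}\sum_i\epsilon_i^{2}\to\E[\bar\sigma^{2}(W,p(X))]$ by LLN plus Assumption~\ref{assumption:4}, a double-sum $\tfrac{1}{NM^{2}}\sum_i\sum_{j\in\mathcal{J}_{M,\theta_{(N)}}(i)}\epsilon_j^{2}$ of the same order $O_{\P}(M^{-1})=o_{\P}(1)$, and a cross term with zero conditional mean whose conditional variance is $O(K_{M,\theta_{(N)}}/(NM))=o_{\P}(1)$. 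The same conditional-mean-zero argument, bounding the conditional variance by $C\cdot N^{-1}\max_i(1+K_{M,\theta_{(N)}}(i)/M)^{2}$, shows $I_5=o_{\P}(1)$.

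The main obstacle is $I_6$, the within-group variance estimator error. Here I would mimic the $Q\to\infty$ analysis of \cite{abadie2006large}: decompose $\widehat{\bar{\sigma}}^{2}(W_i,p(X_i))-\bar\sigma^{2}_{\theta_{(N)}}(W_i,p(X_i;\theta_{(N)}))$ into (a) a Lipschitz-type discrepancy controlled by the $Q$-th matched gap $O_{\P}(Q/N)$ combined with equicontinuity of $\bar\sigma^{2}_{\theta}$, and (b) a sample-variance fluctuation of order $O_{\P}(Q^{-1/2})$ by the uniform fourth-moment bound in Assumption~\ref{assumption:3}(iv)(c). The weight $(K_{M,\theta_{(N)}}(i)/M)^{2}+\tfrac{2M-1}{M}(K_{M,\theta_{(N)}}(i)/M)$ has bounded first and second moments (Abadie--Imbens), so a Cauchy--Schwarz-style split yields $I_6=o_{\P}(1)$ once $Q\to\infty$ with $Q=O(N^{v})$. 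Finally, summing the limits of $I_3$, $I_4$, and $I_7$ reproduces $\sigma^{2}$; uniformity in $\theta_{(N)}$ throughout is maintained because every constant above ($C_{\bar\mu}$, the equicontinuity modulus, and the fourth-moment bound) is uniform over the $\varepsilon$-neighborhood of $\theta^{*}$ by Assumption~\ref{assumption:3}(iv).
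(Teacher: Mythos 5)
Your term-by-term analysis mirrors the paper's own proof of this proposition: the same Lipschitz-plus-matched-gap control for $I_1,I_2$ (the paper routes this through Lemma \ref{lem:3}/Lemma \ref{lem:2}), the same LLN argument for $I_3$, the same conditional-moment expansions for $I_4,I_5$ using the boundedness of the $K_{M,\theta_{(N)}}/M$ moments (Lemma \ref{lem:7}), the same bias/fluctuation split of $I_6$ into what the paper calls $I_{6,1}$ and $I_{6,2}$, and the same $K_{M,\theta_{(N)}}/M$-concentration limit for $I_7$ (the paper's Lemma \ref{lem:9}). The one blemish is your claimed uniform-in-$i$ rate $O_{\P}(M/N)$ (resp.\ $O_{\P}(Q/N)$) for the maximal $M$-th (resp.\ $Q$-th) matched gap, which is false when $M$ or $Q$ grows more slowly than $\log N$ (the maximal gap is then of order $\log N/N$); since only $o_{\P}(1)$ control of these gaps, or an averaged per-unit bound, is actually needed — exactly what Lemma \ref{lem:2} supplies — the conclusions are unaffected.
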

With this proposition, the proof is then finished.
\end{proof}
\begin{proof}[Proof of Proposition \ref{prop:var1}]
The proof uses Lemma \ref{lem:3}, Lemma \ref{lem:10}, and the following two lemmas.
\begin{lemma}[An enhanced version of Lemma S.8 in \cite{abadie2016matching}]\label{lem:7}
Assume Assumptions \ref{assumption:1}-\ref{assumption:4} hold. Choose $C_1$ and $C_2$ satisfying $0<C_1<1-\bar{p}$ and $1-\underline{p}<C_2<1$. Then, for any positive integer $k$, 
\[
\ind(C_1N\leq N_0\leq C_2N)\E_{\theta_N}\left[\left(\frac{K_{M,\theta_N}(i)}{M}\right)^{k} \mid W_i=w, N_0\right] 
\]
is uniformly bounded in $N$, $N_0$, $M\leq\min\{N_0,N_1\}$, and $w=0,1$.
\end{lemma}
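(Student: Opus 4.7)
The plan is to exploit the conditional Binomial structure of $K_{M,\theta_N}(i)$ and combine it with sharp moment estimates for one-dimensional nearest-neighbor order statistics. Throughout the argument, fix $w\in\{0,1\}$ and condition on $W_i=w$ together with $N_0$ satisfying $C_1 N\leq N_0\leq C_2 N$. Under $\P_{\theta_N}$, the propensity scores $P_j:=p(X_j;\theta_N)$ with $W_j=w$ (resp.\ $W_j=1-w$) are i.i.d.\ with conditional density $f_{w,\theta_N}$ (resp.\ $f_{1-w,\theta_N}$) on $[\underline p,\bar p]$; by Assumption \ref{assumption:1}(ii) combined with $\theta_N\to\theta^*$ and Assumption \ref{assumption:3}(i)--(ii), both densities are uniformly (in $N$) bounded above by some $\bar C$ and below by some $c>0$ on $[\underline p,\bar p]$.

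The first step is a conditional Binomial reduction. Let $A_i\subseteq[\underline p,\bar p]$ be the (random but $\sigma(\{P_k:W_k=w\})$-measurable) set of queries $q$ for which $P_i$ lies in the $M$-nearest-neighbor set among the $W=w$ scores, so that $K_{M,\theta_N}(i)=\sum_{j:W_j=1-w}\ind(P_j\in A_i)$. Conditionally on the $W=w$ configuration the indicators are i.i.d.\ Bernoulli, whence
\[
K_{M,\theta_N}(i)\mid(\text{$W=w$ scores},\,N_{1-w})\;\sim\;\mathrm{Bin}(N_{1-w},q_i),\qquad q_i:=\int_{A_i}f_{1-w,\theta_N}(q)\,dq.
\]
Standard Binomial moment bounds then give $\E[K_{M,\theta_N}(i)^k\mid\cdots]\leq C_k\{(N_{1-w}q_i)^k+1\}$.

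The second step is a one-dimensional geometric bound on $A_i$. Let $D_i^+$ denote the distance from $P_i$ to its $M$-th nearest $W=w$ neighbor lying strictly to the right of $P_i$, with the convention $D_i^+:=\bar p-P_i$ if fewer than $M$ such neighbors exist; define $D_i^-$ analogously to the left. For any $q=P_i+r>P_i$, the $W=w$ scores strictly closer to $q$ than $P_i$ are exactly those in $(P_i,P_i+2r)$, and their cardinality first reaches $M$ at $2r=D_i^+$; the symmetric statement for $q<P_i$ then yields $A_i\subseteq[P_i-D_i^-/2,\,P_i+D_i^+/2]$, and consequently
\[
q_i\;\leq\;\tfrac{\bar C}{2}\,(D_i^++D_i^-).
\]

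The third and main step is a uniform order-statistic moment bound: $\E[(N_w D_i^{\pm}/M)^k\mid P_i=p]\leq C_k$ for every $p\in[\underline p,\bar p]$. This is obtained by a Chernoff estimate: for $r\leq\bar p-P_i$, the event $\{D_i^+>r\}$ requires fewer than $M$ of the $N_w-1$ i.i.d.\ same-treatment draws to land in an interval of Lebesgue mass at least $cr$, which has probability at most $\exp\bigl(-c'(cN_w r-M)_+^2/(cN_w r)\bigr)$; integrating $kr^{k-1}$ against this tail gives $\E[(D_i^+)^k\mid P_i=p]\leq C_k(M/N_w)^k$ uniformly in $p$, while the truncation $D_i^+\leq\bar p-P_i\leq\bar p-\underline p$ automatically absorbs the event that $P_i$ falls within $O(M/N)$ of the endpoint. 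Assembling everything,
\[
\E\!\left[\Bigl(\frac{K_{M,\theta_N}(i)}{M}\Bigr)^{\!k}\,\Big|\,W_i=w,\,N_0\right]\leq C_k\Bigl(\frac{N_{1-w}}{N_w}\Bigr)^{\!k}+\frac{C_k}{M^k}\leq C_k\!\left(\frac{1-C_1}{C_1}\right)^{\!k}+C_k,
\]
which is the desired uniform bound. I expect the main technical obstacle to be the honest verification of the order-statistic moment bound when $P_i$ approaches $\underline p$ or $\bar p$: one must show, through the truncation convention together with a careful tracking of Chernoff constants, that the bound $\E[(N_w D_i^{\pm}/M)^k\mid P_i=p]\leq C_k$ holds with a constant $C_k$ depending only on $k$, $c$, $\bar C$, $\bar p-\underline p$, and not on $N$, $M$, $N_0$, or $w$.
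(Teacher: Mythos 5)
Your overall architecture --- a conditional Binomial reduction, an interval bound on the catchment set via the one-sided $M$-th nearest-neighbor distances $D_i^{\pm}$, and a Chernoff-plus-integration moment bound $\E[(N_wD_i^{\pm}/M)^k]\leq C_k$ --- is a genuinely different route from the paper's, which simply feeds the uniform two-sided bound on the density ratio $\bar r_{\theta}(p)$ from \eqref{eq:ratio} into Lemma S.8 of \cite{abadie2016matching} (factorial moments of the Binomial plus Beta moments of order-statistic spacings, yielding the explicit Stirling-number bound). Your Binomial reduction and the geometry of the catchment interval are essentially sound, modulo a harmless factor of $2$: under your truncation convention $D_i^+=\bar p-P_i$ when fewer than $M$ right-neighbors exist, every $q\in(P_i,\bar p\,]$ then has $P_i$ among its $M$ nearest same-treatment scores, so the correct containment is $A_i\subseteq[P_i-D_i^-,\,P_i+D_i^+]$ rather than the version with halved radii; this only changes constants.

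The genuine gap is the claimed uniform lower bound $c>0$ on the conditional density of $p(X;\theta_N)$ given $W=w$. Assumption \ref{assumption:1}(ii) gives continuity of the density of $p(X)$ on the compact interval $[\underline p,\bar p]$, hence a uniform upper bound, but no lower bound: for bounded $X$ the density of $X'\theta$ (and hence of $F(X'\theta)$) typically vanishes at the endpoints of its support, so nothing in Assumptions \ref{assumption:1}--\ref{assumption:4} rules out $f_{w,\theta_N}(p)\to 0$ as $p$ approaches $\underline p$ or $\bar p$. Your Chernoff step uses this lower bound essentially: without it the interval $(P_i,P_i+r]$ need not carry probability mass of order $cr$, $D_i^{\pm}$ need not be $O_{\P}(M/N_w)$ uniformly over the location of $P_i$, and the bound $\E[(N_wD_i^{\pm}/M)^k]\le C_k$ fails. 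The repair is exactly the paper's device: instead of bounding $q_i\le\bar C\,|A_i|$ by Lebesgue measure, bound $q_i\le C_h\,\P_{w}(A_i)$, where $\P_w$ is the same-treatment conditional law and $C_h$ bounds the density ratio $f_{1-w,\theta_N}/f_{w,\theta_N}=\bar r_{\theta_N}(p)^{\pm1}=\big(p(1-q_{\theta_N})/((1-p)q_{\theta_N})\big)^{\pm1}$, which is automatically bounded because the support of $p(X;\theta_N)$ stays away from $0$ and $1$. Then $\P_w(A_i)$ is at most the probability content, under the same-treatment law itself, of the span of the $2M$ nearest same-treatment neighbors, i.e.\ a sum of $2M$ uniform spacings after the probability integral transform, whose $r$-th moments are $\frac{(2M+r-1)!\,N_w!}{(2M-1)!\,(N_w+r)!}$ with no density regularity needed at all; this is precisely the computation in Lemma S.8 of \cite{abadie2016matching} that the paper's proof invokes.
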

\begin{lemma}[A modified version of Lemma S.10 in \cite{abadie2016matching}]\label{lem:9}
Assume Assumptions \ref{assumption:1}-\ref{assumption:4} hold. Assume further that $M=O(N^v)$ for some $v<1/2$ and $M\to\infty$ as $N\to\infty$. Choose $C_1$ and $C_2$ satisfying $0<C_1<1-\bar{p}$ and $1-\underline{p}<C_2<1$. Under $\operatorname{P}_{\theta_N}$, for any $w=0,1$ and any uniformly bounded and equicontinuous functions $\bar{m}(w,p;\theta_N)$, which is simplified as $\bar{m}(w,p)$, we have
\begin{align*}
&\frac{\ind(C_1N\leq N_0 \leq C_2N)}{N_{w}} \sum_{i: W_i=w} \bar{m}(w,p(X_i;\theta_N)) \frac{K_{M,\theta_N}(i)}{M}\\
\stackrel{\operatorname{P}_{\theta_N}}{\rightarrow}~~&\E_{\theta_N}\left[\bar{m}(w,p(X;\theta_N))\left(\frac{p(X;\theta_N)}{1-p(X;\theta_N)}\right)^{1-2 w} \mid W=w\right]
\end{align*}
and
\begin{align*}
&\frac{\ind(C_1N\leq N_0 \leq C_2N)}{N_{w}} \sum_{i: W_i=w} \bar{m}(w,p(X_i;\theta_N)) \left(\frac{K_{M,\theta_N}(i)}{M}\right)^{2}\\
\stackrel{\operatorname{P}_{\theta_N}}{\rightarrow} ~~&\E_{\theta_N}\left[\bar{m}(w,p(X;\theta_N))\left(\frac{p(X;\theta_N)}{1-p(X;\theta_N)}\right)^{2(1-2 w)} \mid W=w\right].
\end{align*}
\end{lemma}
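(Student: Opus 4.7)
The plan is to exploit one combinatorial identity --- the pair-swap
\[
\sum_{i: W_i = w} g(X_i)\, K_{M,\theta_N}(i) = \sum_{j: W_j = 1-w}\, \sum_{i \in \mathcal{J}_{M,\theta_N}(j)} g(X_i),
\]
valid for any function $g$ --- which turns a sum weighted by matching multiplicity into a sum of local match-averages on the opposite-treatment side. I will work throughout on the event $\{C_1 N \leq N_0 \leq C_2 N\}$, whose probability tends to one under $\operatorname{P}_{\theta_N}$ because $q_{\theta_N}\to q_{\theta^*}\in[\underline{p},\bar p]\subset(1-C_2,1-C_1)$, so that both $N_0$ and $N_1$ are of order $N$ and the leading indicator in the statement equals $1$ eventually.

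For the first statement, I apply the identity with $g(\cdot)=\bar{m}(w,p(\cdot;\theta_N))$ and normalize by $N_w M$ to rewrite the left-hand side as
\[
\frac{N_{1-w}}{N_w}\cdot\frac{1}{N_{1-w}}\sum_{j:W_j=1-w}\Big(\frac{1}{M}\sum_{i\in\mathcal{J}_{M,\theta_N}(j)}\bar{m}(w,p(X_i;\theta_N))\Big).
\]
Since $M=o(N)$ and $p(X)$ has a continuous positive density on $[\underline{p},\bar p]$ (Assumption~\ref{assumption:1}(ii)), a standard $M$-NN bandwidth bound yields $\max_{i\in\mathcal{J}_{M,\theta_N}(j)}|p(X_i;\theta_N)-p(X_j;\theta_N)|=o_{\operatorname{P}_{\theta_N}}(1)$ uniformly in $j$; equicontinuity of $\bar m$ then collapses the inner average to $\bar{m}(w,p(X_j;\theta_N))+o_{\operatorname{P}_{\theta_N}}(1)$, uniformly in $j$. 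A WLLN for the outer i.i.d.\ sum, together with $N_{1-w}/N_w\to q_{*}^{1-2w}(1-q_{*})^{-(1-2w)}$ (with $q_{*}=q_{\theta^*}$), produces a limit that matches the stated conditional expectation after a one-line Bayes change of measure.

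For the second statement, I apply the pair-swap twice by writing $K_{M,\theta_N}(i)^2 = K_{M,\theta_N}(i)\cdot\sum_{k:W_k=1-w}\ind(i\in\mathcal{J}_{M,\theta_N}(k))$ and moving one sum outside, which reduces the left-hand side to
\[
\frac{N_{1-w}}{N_w}\cdot\frac{1}{N_{1-w}}\sum_{k:W_k=1-w}\Big(\frac{1}{M}\sum_{i\in\mathcal{J}_{M,\theta_N}(k)}\bar{m}(w,p(X_i))\,\frac{K_{M,\theta_N}(i)}{M}\Big).
\]
Equicontinuity of $\bar m$ again replaces $\bar{m}(w,p(X_i))$ by $\bar{m}(w,p(X_k;\theta_N))$ inside the inner average, and what remains is the \emph{uniform local concentration}
\[
\sup_{k:W_k=1-w}\Big|\frac{1}{M}\sum_{i\in\mathcal{J}_{M,\theta_N}(k)}\frac{K_{M,\theta_N}(i)}{M}-\Big(\frac{p(X_k;\theta_N)}{1-p(X_k;\theta_N)}\Big)^{1-2w}\Big|\stackrel{\operatorname{P}_{\theta_N}}{\rightarrow}0.
\]
This is the main technical obstacle: the first statement supplies only a globally weighted average of $K_M/M$, whereas here I need the average of $K_M(i)/M$ over the vanishing-width match set of a specific $k$ to equal the limit dictated by $p(X_k;\theta_N)$. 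I would establish it by partitioning $[\underline{p},\bar p]$ into $J_N\to\infty$ bins of vanishing width, using equicontinuity to approximate the pointwise local average by a bin-average, and applying the first statement of the lemma to each bin with $\bar m$ replaced by a smooth bump supported there; the uniform fourth-moment bound from Assumption~\ref{assumption:3}(iv)(c) together with Lemma~\ref{lem:7} furnish the uniform integrability needed to pass from bin-averages to the pointwise local average. Once this concentration is in hand, a WLLN on the outer sum over $k$ and the same Bayes change of measure as in the first case yield the stated limit with the doubled exponent.
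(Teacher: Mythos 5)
Your treatment of the \emph{first} display is correct and takes a genuinely different, more elementary route than the paper: the exact pair-swap identity, the vanishing $M$-NN bandwidth (the paper's Lemma \ref{lem:2}), and equicontinuity of $\bar m$ together replace the paper's machinery of conditional binomial moments of $K_{M,\theta_N}(i)$ given the design (Lemma \ref{lem:8}), the mean-value comparison of the catchment probability $P_{Ni}$ with a density ratio times an order-statistic spacing (Lemma \ref{lem:6}), and the Riemann-sum limit for spacings (Lemma \ref{lem:5}). For the first moment this is a real simplification, and the Bayes change of measure at the end does produce the stated conditional expectation.

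The second display is where the proposal breaks down. The step you isolate as ``uniform local concentration'',
\[
\sup_{k:W_k=1-w}\Bigl|\frac{1}{M}\sum_{i\in\mathcal{J}_{M,\theta_N}(k)}\frac{K_{M,\theta_N}(i)}{M}-\Bigl(\frac{p(X_k;\theta_N)}{1-p(X_k;\theta_N)}\Bigr)^{1-2w}\Bigr|=o_{\operatorname{P}_{\theta_N}}(1),
\]
is not only unproven but false under the lemma's hypotheses. Indeed, $\sum_{i\in\mathcal{J}_{M,\theta_N}(k)}K_{M,\theta_N}(i)=\sum_{j:W_j=1-w}\bigl|\mathcal{J}_{M,\theta_N}(j)\cap\mathcal{J}_{M,\theta_N}(k)\bigr|$, so this local average is driven by the number of opposite-treatment points in a window of width of order $M/N$ around $p(X_k;\theta_N)$; that count has mean of order $M$, fluctuations of order $\sqrt{M}$, and its maximal relative deviation over the roughly $N/M$ essentially disjoint windows is of order $\sqrt{\log(N/M)/M}$. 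Since the lemma only assumes $M\to\infty$ and $M=O(N^v)$ (so $M=\log\log N$ is allowed), this supremum need not vanish. Your proposed repair --- binning $[\underline{p},\bar{p}]$ into $J_N\to\infty$ cells and applying the first statement to a bump function on each cell --- cannot deliver uniformity either, because the first statement is a plain convergence in probability with no rate, and you would need it simultaneously over a growing number of cells. What actually suffices for the outer average over $k$ is an averaged, $L^1$-in-$k$ version of the concentration, and establishing that requires controlling the covariances of $K_{M,\theta_N}(i)$ and $K_{M,\theta_N}(j)$ for nearby $i,j$; this is exactly the catchment-interval overlap counting carried out in the paper's Lemma \ref{lem:8} (at most $2(M-1)N_0$ overlapping pairs), combined with Lemmas \ref{lem:6} and \ref{lem:5} to identify the limit of $N_{1-w}^2P_{Ni}^2/M^2$. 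As written, the second half of your argument has a genuine gap.
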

Notably, the above lemmas can be established in a uniform fashion. Later, they will also be used to prove Theorem \ref{thm:1}, where we focus on the case of a particular $\theta_N$. However, these lemmas still hold when we change $\theta_N$ to any $\theta_{(N)}$ converging to $\theta^*$. Furthermore, all the convergences and the uniform boundedness they achieve are uniform in a family of $\theta_{(N)}$ as long as $\theta_{(N)}\to\theta^*$ uniformly. We do not present the lemmas with a family of $\theta_{(N)}$ for notation symplicity.

Get back to the proof. As for $I_1$, since the convergence in Lemma \ref{lem:3} holds uniformly for any sequences of $\theta_{(N)}$ uniformly converging to $\theta^*$, and $\bar{\mu}_{\theta_{(N)}}(w,p)$ is uniformly bounded in $\theta_{(N)}$, $w$, and $p$, we know that 
\[
I_1\overset{\operatorname{P}_{\theta_{(N)}}}{\to}0 \text{ uniformly in }\theta_{(N)}.
\] 

For $I_2$, let 
\[
\mathbf{W}_N:=(W_{1},\cdots,W_{N})~~ {\rm and}~~ \mathbf{F}_N:=(X_{1}'\theta_{(N)},\cdots,X_{N}'\theta_{(N)}). 
\]
We then have
\begin{align*}
    &\E_{\theta_{(N)}}\left[\ind(C_1N\leq N_0\leq C_2N)I_2^2\mid \mathbf{W}_N,\mathbf{F}_N\right]\\
    \leq& \frac{\ind(C_1N\leq N_0\leq C_2N)16C_Y^4}{N^2}\\
    &\times\left(\sum_{i=1}^{N}\left|\bar{\mu}_{\theta_{(N)}}(1-W_i,p(X_i;\theta_{(N)}))-\frac{1}{M} \sum_{j \in \mathcal{J}_{M}(i, \theta_{(N)})} \bar{\mu}_{\theta_{(N)}}(1-W_i,p(X_j;\theta_{(N)}))\right|\right)^2,
\end{align*}
where $C_Y$ is the upper bound of $\left(\E_{\theta_{(N)}}\left[|Y|^4|W,p(X;\theta_{(N)}))\right]\right)^{\frac{1}{4}}$. By the uniform convergence version of Lemma \ref{lem:3}, we know that the conditional expectation is $o_{\operatorname{P}_{\theta_{(N)}}}(1)$ uniformly in $\theta_{(N)}$. Then it is straightforward to show that
\[
\ind(C_1N\leq N_0\leq C_2N)I_2\overset{\operatorname{P}_{\theta_{(N)}}}{\to}0, 
\]
that is, 
\[
I_2\overset{\operatorname{P}_{\theta_{(N)}}}{\to}0 \text{ uniformly in }\theta_{(N)}.
\] 

As for $I_3$, since the two terms are both asymptotically normal and their asymptotic variances are uniformly bounded in $\theta_{(N)}$, we have the uniform convergence, that is, 
\begin{align*}
I_3&\overset{\operatorname{P}_{\theta_{(N)}}}{\to}\E_{\theta_{(N)}}\left[(\bar{\mu}_{\theta_{(N)}}(1,p(X;\theta_{(N)}))-\bar{\mu}_{\theta_{(N)}}(0,p(X;\theta_{(N)})))^2\right]-\tau^2\\
&\to\E\left[(\bar{\mu}(1,p(X))-\bar{\mu}(0,p(X))-\tau)^2\right].
\end{align*}

As for $I_4$, we decompose
\begin{align}\label{I4_decom}
    I_4=\frac{1}{N} \sum_{i=1}^{N}\epsilon_i^2\left(\frac{K_{M, \theta_{(N)}}(i)}{M^2}+1\right)-\frac{2}{NM} \sum_{i=1}^{N}\sum_{j \in \mathcal{J}_{M}(i, \theta_{(N)})}\epsilon_i\epsilon_j+\frac{1}{NM^2} \sum_{i=1}^{N}\sum_{j, k \in \mathcal{J}_{M}(i, \theta_{(N)}),j_1\neq j_2}\epsilon_k\epsilon_j.
\end{align}    
Similar to the calculation of $I_3$, we have 
\begin{align*}
\frac{1}{N} \sum_{i=1}^{N}\epsilon_i^2\overset{\operatorname{P}_{\theta_{(N)}}}{\to}\E_{\theta_{(N)}}\left[\bar{\sigma}_{\theta_{(N)}}^{2}(W, p(X;\theta_{(N)}))\right]\to\E\left[\bar{\sigma}^{2}\left(W, p(X)\right)\right]
\end{align*}
holds uniformly in $\theta_{(N)}$. For the rest terms on the right-hand side of \eqref{I4_decom}, we have 
\begin{align*}
    &\E_{\theta_{(N)}}\left[\frac{\ind(C_1N\leq N_0\leq C_2N)}{N} \sum_{i=1}^{N}\epsilon_i^2\frac{K_{M, \theta_{(N)}}(i)}{M^2}|\mathbf{W}_N,\mathbf{F}_N\right]\\
    \leq &\frac{\ind(C_1N\leq N_0\leq C_2N)C_Y^2}{NM} \sum_{i=1}^{N}\left(\frac{K_{M, \theta_{(N)}}(i)}{M}\right),\\ 
    &\E_{\theta_{(N)}}\left[\left(\frac{\ind(C_1N\leq N_0\leq C_2N)2}{NM} \sum_{i=1}^{N}\sum_{j \in \mathcal{J}_{M}(i, \theta_{(N)})}\epsilon_i\epsilon_j\right)^2\mid\mathbf{W}_N,\mathbf{F}_N\right]\\
    \leq&\frac{\ind(C_1N\leq N_0\leq C_2N)4}{N^2M^2} \sum_{i=1}^{N}\sum_{j \in \mathcal{J}_{M}(i, \theta_{(N)})}2\bar{\sigma}_{\theta_{(N)}}^{2}(W_i, p(X_i;\theta_{(N)}))\bar{\sigma}_{\theta_{(N)}}^{2}(W_j, p(X_j;\theta_{(N)}))\\
    \leq&\frac{\ind(C_1N\leq N_0\leq C_2N)8C_Y^4}{NM},\\
    &\E_{\theta_{(N)}}\left[\left(\frac{\ind(C_1N\leq N_0\leq C_2N)}{NM^2} \sum_{i=1}^{N}\sum_{j, k \in \mathcal{J}_{M}(i, \theta_{(N)}),j_1\neq j_2}\epsilon_k\epsilon_j\right)^2\mid\mathbf{W}_N,\mathbf{F}_N\right]\\
    \leq&\frac{\ind(C_1N\leq N_0\leq C_2N)C_Y^4}{N^2M^2} \sum_{i=1}^{N}\left(\frac{2K_{M, \theta_{(N)}}(i)}{M}\right)^2,
\end{align*}
where the inequalities about the last two terms are done by bounding the maximal number of the non-zero terms in the full expansion of the two expectations and bounding all terms in the form of $\bar{\sigma}_{\theta_{(N)}}^{2}(W_i, p(X_i;\theta_{(N)}))$ by $C_Y^2$. According to Lemma \ref{lem:7}, all the three terms are $o_{\operatorname{P}_{\theta_{(N)}}}(1)$ uniformly in $\theta_{(N)}$, and accordingly 
\[
I_4\overset{\operatorname{P}_{\theta_{(N)}}}{\to}\E\left[\bar{\sigma}^{2}\left(W, p(X)\right)\right] \text{ uniformly in }\theta_{(N)}.
\] 

As for $I_5$, one can derive that
\begin{align*}
    &\E_{\theta_{(N)}}\left[\ind(C_1N\leq N_0\leq C_2N)I_5^2\mid\mathbf{W}_N,\mathbf{F}_N\right]\\
    \leq&\frac{\ind(C_1N\leq N_0\leq C_2N)4}{N^2}\sum_{i=1}^{N}\E_{\theta_{(N)}}\left[\left(\left(1+\frac{K_{M, \theta_{(N)}}(i)}{M}\right)2C_Y\right)^2\epsilon_i^2\mid\mathbf{W}_N,\mathbf{F}_N\right]\\
    \leq &\frac{\ind(C_1N\leq N_0\leq C_2N)16C_Y^4}{N^2}\sum_{i=1}^{N}\left(1+\frac{K_{M, \theta_{(N)}}(i)}{M}\right)^2
\end{align*}
holds and the conditional expectation is $o_{\operatorname{P}_{\theta_{(N)}}}(1)$ uniformly in $\theta_{(N)}$. So $I_5\overset{\operatorname{P}_{\theta_{(N)}}}{\to}0$ uniformly in $\theta_{(N)}$. 

As for $I_6$, we can decompose it as $I_6=I_{6,1}+I_{6,2}+o_{\operatorname{P}_{\theta_{(N)}}}(1)$ uniformly in $\theta_{(N)}$, where
\begin{align*}
    I_{6,1}:=&\frac{1}{N} \sum_{i=1}^{N}\left(\left(\frac{K_{M, \theta_{(N)}}(i)}{M}\right)^{2}+\frac{2M-1}{M}\left(\frac{K_{M, \theta_{(N)}}(i)}{M}\right)\right)\\
    &\times\left(E_{\theta_{(N)}}\left[\ind(C_1N\leq N_0\leq C_2N)\widehat{\bar{\sigma}}^{2}\left(W_i, p(X_i)\right)\mid \mathbf{F}_N,\mathbf{W}_N\right]-\bar{\sigma}_{\theta_{(N)}}^{2}\left(W_i, p(X_i;\theta_{(N)})\right)\right),\\
    I_{6,2}:=&\frac{1}{N} \sum_{i=1}^{N}\left(\left(\frac{K_{M, \theta_{(N)}}(i)}{M}\right)^{2}+\frac{2M-1}{M}\left(\frac{K_{M, \theta_{(N)}}(i)}{M}\right)\right)\\
    &\times\left(\ind(C_1N\leq N_0\leq C_2N)\widehat{\bar{\sigma}}^{2}\left(W_i, p(X_i)\right)\right.\\
    &\left.-E_{\theta_{(N)}}\left[\ind(C_1N\leq N_0\leq C_2N)\widehat{\bar{\sigma}}^{2}\left(W_i, p(X_i)\right)\mid \mathbf{F}_N,\mathbf{W}_N\right]\right).
\end{align*}
For $I_{6,1}$, we have
\begin{align*}
    &\E_{\theta_{(N)}}\left[\ind(C_1N\leq N_0\leq C_2N)\widehat{\bar{\sigma}}^{2}\left(W_i, p(X_i)\right)\mid \mathbf{F}_N,\mathbf{W}_N\right]-\bar{\sigma}_{\theta_{(N)}}^{2}\left(W_i, p(X_i;\theta_{(N)})\right)\\
    =&\E_{\theta_{(N)}}\left[\frac{\ind(C_1N\leq N_0\leq C_2N)}{Q-1}\sum_{j \in \mathcal{H}_{Q}\left(i, \bar{\theta}_{N}\right)}\left(Y_j-\frac{1}{Q} \sum_{k \in \mathcal{H}_{Q}\left(i, \bar{\theta}_{N}\right)} Y_k\right)^{2}\mid \mathbf{F}_N,\mathbf{W}_N\right]\\
    &-\bar{\sigma}_{\theta_{(N)}}^{2}\left(W_i, p(X_i;\theta_{(N)})\right)\\
    =&\frac{\ind(C_1N\leq N_0\leq C_2N)}{Q-1} \sum_{j \in \mathcal{H}_{Q}\left(i, \bar{\theta}_{N}\right)}\left(\bar{\mu}_{\theta_{(N)}}(W_j,p(X_j;\theta_{(N)}))-\frac{1}{Q} \sum_{k \in \mathcal{H}_{Q}\left(i, \bar{\theta}_{N}\right)} \bar{\mu}_{\theta_{(N)}}(W_k,p(X_k;\theta_{(N)}))\right)^{2}\\
    &+\frac{\ind(C_1N\leq N_0\leq C_2N)}{Q}\sum_{j \in \mathcal{H}_{Q}\left(i, \bar{\theta}_{N}\right)}\bar{\sigma}^2_{\theta_{(N)}}(W_j,p(X_j;\theta_{(N)}))-\bar{\sigma}_{\theta_{(N)}}^{2}\left(W_i, p(X_i;\theta_{(N)})\right).
\end{align*}
By the derivations of Lemma \ref{lem:2}, we can analogously obtain that
\begin{align}\label{eq:use_l2}
    \max_{j,k\in \mathcal{H}_{Q}\left(i, \bar{\theta}_{N},\right),\forall i}\left|p(X_j;\theta_{(N)})-p(X_k;\theta_{(N)})\right|=o_{\operatorname{P}_{\theta_{(N)}}}(1)
\end{align}
uniformly in $\theta_{(N)}$. Then, with the Lipschitz property and the equicontinuity in Assumption \ref{assumption:3}(iv), we have the uniform convergences that
\begin{align*}
    &\max_{j,k\in \mathcal{H}_{Q}\left(i, \bar{\theta}_{N},\right),\forall i}\left|\bar{\mu}_{\theta_{(N)}}(W_j,p(X_j;\theta_{(N)}))-\bar{\mu}_{\theta_{(N)}}(W_k,p(X_k;\theta_{(N)}))\right|=o_{\operatorname{P}_{\theta_{(N)}}}(1),\\
    &\max_{j,k\in \mathcal{H}_{Q}\left(i, \bar{\theta}_{N},\right),\forall i}\left|\bar{\sigma}^2_{\theta_{(N)}}(W_j,p(X_j;\theta_{(N)}))- \bar{\sigma}^2_{\theta_{(N)}}(W_k,p(X_k;\theta_{(N)}))\right|=o_{\operatorname{P}_{\theta_{(N)}}}(1).
\end{align*}
Accordingly,
\begin{align*}
    \max_{i}\left|\E_{\theta_{(N)}}\left[\ind(C_1N\leq N_0\leq C_2N)\widehat{\bar{\sigma}}^{2}\left(W_i, p(X_i)\right)\mid \mathbf{F}_N,\mathbf{W}_N\right]-\bar{\sigma}_{\theta_{(N)}}^{2}\left(W_i, p(X_i;\theta_{(N)})\right)\right|=o_{\operatorname{P}_{\theta_{(N)}}}(1)
\end{align*}
uniformly in $\theta_{(N)}$. Since Lemma \ref{lem:7} yields that
\begin{align}\label{I_6_mid}
    \frac{\ind(C_1N\leq N_0\leq C_2N)}{N} \sum_{i=1}^{N}\left(\left(\frac{K_{M, \theta_{(N)}}(i)}{M}\right)^{2}+\frac{2M-1}{M}\left(\frac{K_{M, \theta_{(N)}}(i)}{M}\right)\right)=O_{\operatorname{P}_{\theta_{(N)}}}(1)
\end{align}
uniformly in $\theta_{(N)}$, we know that $I_{6,1}\overset{\operatorname{P}_{\theta_{(N)}}}{\to}0$ uniformly in $\theta_{(N)}$. 

For $I_{6,2}$, we have
\begin{align*}
    &\ind(C_1N\leq N_0\leq C_2N)\widehat{\bar{\sigma}}^{2}\left(W_i, p(X_i)\right)-\E_{\theta_{(N)}}\left[\ind(C_1N\leq N_0\leq C_2N)\widehat{\bar{\sigma}}^{2}\left(W_i, p(X_i)\right)\mid \mathbf{F}_N,\mathbf{W}_N\right]\\
    =&\frac{\ind(C_1N\leq N_0\leq C_2N)}{Q}\sum_{j\in \mathcal{H}_{Q}\left(i, \bar{\theta}_{N}\right)}\left(Y_j^2-\E_{\theta_{(N)}}\left[Y_j^2\mid W_j,p(X_j;\theta_{(N)})\right]\right)\\
    &-\frac{\ind(C_1N\leq N_0\leq C_2N)}{Q(Q-1)}\sum_{j,k\in\mathcal{H}_{Q}\left(i, \bar{\theta}_{N}\right),j\neq k}\epsilon_j\epsilon_k.
\end{align*}
Using the facts that
\begin{align*}
    &\E_{\theta_{(N)}}\left[\left(\frac{\ind(C_1N\leq N_0\leq C_2N)}{Q}\sum_{j\in \mathcal{H}_{Q}\left(i, \bar{\theta}_{N}\right)}\left(Y_j^2-\E_{\theta_{(N)}}\left[Y_j^2\mid W_j,p(X_j;\theta_{(N)})\right]\right)\right)^2\mid\mathbf{F}_N,\mathbf{W}_N\right]\leq \frac{C_Y^4}{Q},\\
    &\E_{\theta_{(N)}}\left[\left(\frac{\ind(C_1N\leq N_0\leq C_2N)}{Q(Q-1)}\sum_{j,k\in\mathcal{H}_{Q}\left(i, \bar{\theta}_{N}\right),j\neq k}\epsilon_j\epsilon_k\right)^2\mid\mathbf{F}_N,\mathbf{W}_N\right]\leq \frac{C_Y^44Q(Q-1)/2}{Q^2(Q-1)^2},
\end{align*}
we know that 
\begin{align*}
    &\E_{\theta_{(N)}}\left[\left(\ind(C_1N\leq N_0\leq C_2N)\widehat{\bar{\sigma}}^{2}\left(W_i, p(X_i)\right)\right.\right.\\
    &\left.\left.~~~~~~-E_{\theta_{(N)}}\left[\ind(C_1N\leq N_0\leq C_2N)\widehat{\bar{\sigma}}^{2}\left(W_i, p(X_i)\right)\mid \mathbf{F}_N,\mathbf{W}_N\right]\right)^2\mid\mathbf{F}_N,\mathbf{W}_N\right]=o(1)   
\end{align*}
uniformly in $\mathbf{F}_N$ and $\mathbf{W}_N$ and $\theta_{(N)}$. Since
\begin{align*}
    &\E_{\theta_{(N)}}\left[I_6^2\mid\mathbf{F}_N,\mathbf{W}_N\right]\\
    \leq&\frac{1}{N} \sum_{i=1}^{N}\E_{\theta_{(N)}}\left[\left(\left(\frac{K_{M, \theta_{(N)}}(i)}{M}\right)^{2}+\frac{2M-1}{M}\left(\frac{K_{M, \theta_{(N)}}(i)}{M}\right)\right)^2\right.\\
    &\left.\times\left(\ind(C_1N\leq N_0\leq C_2N)\widehat{\bar{\sigma}}^{2}\left(W_i, p(X_i)\right)\right.\right.\\
    &\left.\left.-E_{\theta_{(N)}}\left[\ind(C_1N\leq N_0\leq C_2N)\widehat{\bar{\sigma}}^{2}\left(W_i, p(X_i)\right)\mid \mathbf{F}_N,\mathbf{W}_N\right]\right)^2\mid\mathbf{F}_N,\mathbf{W}_N\right]\\
    \leq&\frac{1}{N} \sum_{i=1}^{N}\left(\left(\frac{K_{M, \theta_{(N)}}(i)}{M}\right)^{2}+\frac{2M-1}{M}\left(\frac{K_{M, \theta_{(N)}}(i)}{M}\right)\right)^2\\
    &\E_{\theta_{(N)}}\left[\left(\ind(C_1N\leq N_0\leq C_2N)\widehat{\bar{\sigma}}^{2}\left(W_i, p(X_i)\right)\right.\right.\\
    &\left.\left.-E_{\theta_{(N)}}\left[\ind(C_1N\leq N_0\leq C_2N)\widehat{\bar{\sigma}}^{2}\left(W_i, p(X_i)\right)\mid \mathbf{F}_N,\mathbf{W}_N\right]\right)^2\mid\mathbf{F}_N,\mathbf{W}_N\right],
\end{align*}
we can use (\ref{I_6_mid}) to derive that $\E_{\theta_{(N)}}\left[I_6^2\mid\mathbf{F}_N,\mathbf{W}_N\right]=o_{\operatorname{P}_{\theta_{(N)}}}(1)$ uniformly in $\theta_{(N)}$. That is to say, $I_{6,2}\overset{\operatorname{P}_{\theta_{(N)}}}{\to}0$ and $I_{6}\overset{\operatorname{P}_{\theta_{(N)}}}{\to}0$ uniformly in $\theta_{(N)}$. 

As for $I_7$, by Lemma \ref{lem:9} and the derivations in Lemma \ref{lem:10}, 
\begin{align*}
I_7\overset{\operatorname{P}_{\theta_{(N)}}}{\to}&\E_{\theta_{(N)}}\left[\bar{\sigma}_{\theta_{(N)}}^{2}\left(1, p(X;\theta_{(N)})\right)\left(\left(\frac{1-p(X;\theta_{(N)})}{p(X;\theta_{(N)})}+1\right)^2-1\right)W\right]\\
&+\E_{\theta_{(N)}}\left[\bar{\sigma}_{\theta_{(N)}}^{2}\left(0, p(X;\theta_{(N)})\right)\left(\left(\frac{p(X;\theta_{(N)})}{1-p(X;\theta_{(N)})}+1\right)^2-1\right)(1-W)\right]\\
\to& \E\left[\bar{\sigma}^{2}\left(1, p(X)\right)\left(\frac{1}{p(X)}-p(X)\right)+\bar{\sigma}^{2}\left(0, p(X)\right)\left(\frac{1}{1-p(X)}-(1-p(X))\right)\right]\\
=& \E\left[\frac{\bar{\sigma}^2\left(1, p(X)\right)}{p(X)}+\frac{\bar{\sigma}^2\left(0, p(X)\right)}{1-p(X)}\right]-\E\left[\bar{\sigma}^2\left(W, p(X)\right)\right].
\end{align*}
Combining them all finishes the proof.
\end{proof}

\section{Proofs of the lemmas in Section \ref{sec:main_proof}}\label{sec:proof_direct}
This section proves the lemmas used in Section \ref{sec:main_proof}. In order to avoid circular arguments, we put the proof of Lemma \ref{lem:10}, which uses Lemma \ref{lem:7} and Lemma \ref{lem:9}, at last. We will further introduce some new lemmas in this section, whose proofs will be put in the next section. 

\begin{proof}[Proof of Lemma \ref{lem:1}]
    It's exactly Lemma 1 in 
 \cite{abadie2016matching}.
\end{proof}

\begin{proof}[Proof of Lemma \ref{lem:3}]
To prove this lemma, we use the following lemma.
\begin{lemma}[An enhanced version of Lemma S.1 in \cite{abadie2016matching}]\label{lem:2}
Let  $F_{0}$  and  $F_{1}$ be two continuous distribution functions with a common real interval support with the densities $f_0$ and $f_1$, respectively. Assume the density ratio $f_{1}(x) / f_{0}(x) \leq \bar{r}<\infty$ for any $x$ in the support of $F_0, F_1$. Suppose
\[
\tilde{X}_{0,1}, \ldots, \tilde{X}_{0, n_{0}} \stackrel{\rm i.i.d.}{\sim}  F_{0}  ~~\text{ and }~~ \tilde{X}_{1,1}, \ldots, \tilde{X}_{1, n_{1}} \stackrel{\rm i.i.d.}{\sim}  F_{1}
\]
and denote $n=n_{0}+n_{1}$.  For  $1 \leq i \leq n_{1}$ and  $1 \leq m \leq M \leq n_{0}$, let  $\left|U_{n_{0}, n_{1}, i}\right|_{(m)}$ be the $m$-th order statistic of  
\[
\left\{\left|\tilde{X}_{1, i}-\tilde{X}_{0,1}\right|, \ldots,\left|\tilde{X}_{1, i}-\tilde{X}_{0, n_{0}}\right|\right\}. 
\]
Then, as long as $n_0\geq4$, we have (i)
\begin{align*}
&\operatorname{P}\left(\max_{i}\Big|U_{n_{0}, n_{1}, i}\Big|_{(M)}>2\max_{i}\left|F_0^{-1}\left(\frac{i+1}{\lfloor n_0^{1/2}\log n_0\rfloor}\right)-F_0^{-1}\left(\frac{i}{\lfloor n_0^{1/2}\log n_0\rfloor}\right)\right|\right)\notag\\
&\leq \exp\left(\log M+1+(M-1) \log \left(\frac{n_0e}{(\lfloor n_0^{1/2}\log n_0\rfloor-1)M}\right)-\frac{n_0}{\lfloor n_0^{1/2}\log n_0\rfloor}\right).&
\end{align*}
If further assuming that the support of  $F_{0}$  and  $F_{1}$  is an interval inside $[0,1]$, we have (ii)
\begin{align*}
&\E\left[\frac{1}{\sqrt{N}} \sum_{i=1}^{n_{1}} \frac{1}{M} \sum_{m=1}^{M}\left|U_{n_{0}, n_{1}, i}\right|_{(m)}\right]\leq \frac{3\bar{r}n_{1}}{n^{1 / 2}\lfloor n_0^{1/2}\log n_0\rfloor}&\notag\\
&~~~~~~+\exp\left(\log \frac{Mn_1}{n^{1/2}}+1+(M-1) \log \left(\frac{n_0e}{(\lfloor n_0^{1/2}\log n_0\rfloor-1)M}\right)-\frac{n_0}{\lfloor n_0^{1/2}\log n_0\rfloor}\right).&
\end{align*}
\end{lemma}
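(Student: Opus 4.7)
The plan is to partition the support of $F_0$ into $K := \lfloor n_0^{1/2}\log n_0 \rfloor$ equal-probability quantile bins $B_j := [F_0^{-1}(j/K), F_0^{-1}((j+1)/K))$, each of $F_0$-mass $1/K$ and Lebesgue width $w_j := F_0^{-1}((j+1)/K) - F_0^{-1}(j/K)$. Define the good event $\mathcal{E}$, which depends only on $\{\tilde{X}_{0,k}\}_{k=1}^{n_0}$, to be the event that every bin contains at least $M$ of the $\tilde{X}_{0,k}$'s. On $\mathcal{E}$, if $\tilde{X}_{1,i}$ falls into bin $B_{J(i)}$, then the at-least-$M$ samples from $\{\tilde{X}_{0,k}\}$ in $B_{J(i)}$ all lie within distance $\le w_{J(i)}$ of $\tilde{X}_{1,i}$, so $|U_{n_0,n_1,i}|_{(M)} \le w_{J(i)} \le \max_j w_j$. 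This deterministic pointwise bound reduces part~(i) to estimating $\P(\mathcal{E}^c)$; the factor of $2$ in the statement is slack.

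To control $\P(\mathcal{E}^c)$, I would use a union bound over the $K$ bins together with the fact that each bin-count is $\mathrm{Bin}(n_0,1/K)$:
\[
\P(\mathcal{E}^c) \le K \cdot \P\bigl(\mathrm{Bin}(n_0,1/K) \le M-1\bigr).
\]
Since $M = O(N^v)$ with $v<1/2$ satisfies $M \ll n_0/K$, the summands in the lower tail are monotone in $k \le M-1$, so the tail is at most $M$ times the largest term $\binom{n_0}{M-1}(1/K)^{M-1}(1-1/K)^{n_0-M+1}$. Invoking Stirling via $\binom{n_0}{M-1} \le (n_0 e/(M-1))^{M-1}$, the standard inequality $(1-1/K)^{n_0-M+1} \le \exp(-n_0/K + O(M/K))$, and absorbing the factor of $K$ and the minor $K$-versus-$K-1$ discrepancies into the additive constant inside the exponential would reproduce the stated bound. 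The careful constant-chasing in this step is where I expect the most technical care to be needed.

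For part~(ii), split the scaled expectation along $\mathcal{E}$ and $\mathcal{E}^c$. On $\mathcal{E}$, every $|U_{n_0,n_1,i}|_{(m)}$ is at most the bin width $w_{J(i)}$, so $\frac{1}{M}\sum_{m=1}^M |U_{n_0,n_1,i}|_{(m)} \le w_{J(i)}$, and the density-ratio bound $f_1 \le \bar{r} f_0$ gives
\[
\E[w_{J(i)}] = \sum_j w_j \int_{B_j} f_1(x)\,\d x \le \bar{r}\sum_j w_j/K \le \bar{r}/K,
\]
using that $\sum_j w_j$ equals the diameter of the common support, which is at most $1$. Summing over $i = 1, \ldots, n_1$ and dividing by $\sqrt{n}$ contributes at most $\bar{r}n_1/(\sqrt{n}\,K)$ to the expectation, and a universal factor of $3$ absorbs the minor overhead. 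On $\mathcal{E}^c$ I would use the crude bound $|U_{n_0,n_1,i}|_{(m)} \le 1$ (support inside $[0,1]$), so the unscaled double sum is at most $n_1$, giving a contribution of at most $(n_1/\sqrt{n}) \cdot \P(\mathcal{E}^c)$; pulling the $n_1/\sqrt{n}$ factor inside the exponential obtained in part~(i) produces the $\log(Mn_1/n^{1/2})$ term in the second summand, completing the proof.
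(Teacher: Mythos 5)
Your proposal is correct and follows essentially the same route as the paper's proof: equal-$F_0$-probability quantile bins with $K=\lfloor n_0^{1/2}\log n_0\rfloor$, a first-moment/union bound on the event that some bin contains fewer than $M$ control points, the ``$M$ times the largest term'' plus Stirling bound on the binomial lower tail, and the split along $\mathcal{E}$ versus $\mathcal{E}^c$ combined with the density-ratio bound $f_1\le\bar r f_0$ for part~(ii). The only cosmetic difference is that the paper bounds the conditional sum in~(ii) by the width of three adjacent bins (which is where its explicit constant $3$ comes from), whereas you use a single bin width and treat the factor $3$ as slack.
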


Get back to the proof. Under $\operatorname{P}_{\theta_N}$, since $C_1N\leq N_0\leq C_2N$ almost surely holds as $N\to\infty$, there is
\begin{align*}
    R_N(\theta_N)\overset{\operatorname{P}_{\theta_N}}{\to}& \operatorname{plim} \frac{\ind(C_1N\leq N_0\leq C_2N)}{\sqrt{N}} \sum_{i=1}^{N}\left(2 W_i-1\right)&\\
&\times\left(\bar{\mu}_{\theta_N}\left(1-W_i, p(X_i;\theta_N)\right)-\frac{1}{M} \sum_{j \in \mathcal{J}_{M,\theta_N}(i)} \bar{\mu}_{\theta_N}\left(1-W_i, p(X_j;\theta_N)\right)\right),&
\end{align*}
where ``$\operatorname{plim}$'' represents the probability limit under $\operatorname{P}_{\theta_N}$. Then, using Lemma \ref{lem:2} with $p(X_i;\theta_N)$ now represented as $\tilde{X}$'s, we know that, as long as $N\geq 4/C_1$,
\begin{align*}
&\E_{\theta_N}\left[\left|\frac{\ind(C_1N\leq N_0\leq C_2N)}{\sqrt{N}} \sum_{i:W_i=1}\left(2 W_i-1\right)\right.\right.&\\
&\left.\left.~~\times\left(\bar{\mu}_{\theta_N}\left(1-W_i, p(X_i;\theta_N)\right)-\frac{1}{M} \sum_{j \in \mathcal{J}_{M,\theta_N}(i)} \bar{\mu}_{\theta_N}\left(1-W_i, p(X_j;\theta_N)\right)\right)\right|\mid \boldsymbol{W}_N\right]&\\
\leq&C_{\bar{\mu}}\E_{\theta_N}\left[\frac{\ind(C_1N\leq N_0\leq C_2N)}{\sqrt{N}} \sum_{i:W_i=1}\frac{1}{M} \sum_{j \in \mathcal{J}_{M,\theta_N}(i)} \left|p(X_j;\theta_N)-p(X_i;\theta_N)\right|\mid \boldsymbol{W}_N\right]&\\
\leq &\ind(C_1N\leq N_0\leq C_2N) C_{\bar{\mu}}\left[\frac{3\max_p\{\bar{r}_{\theta_N}(p)\}N_{1}}{N^{1 / 2}\lfloor N_0^{1/2}\log N_0\rfloor}\right.&\\
&\left.+\exp\left(\log \frac{MN_1}{N^{1/2}}+1+(M-1) \log \left(\frac{N_0e}{(\lfloor N_0^{1/2}\log N_0\rfloor-1)M}\right)-\frac{N_0}{\lfloor N_0^{1/2}\log N_0\rfloor}\right)\right],&
\end{align*}
where $\boldsymbol{W}_N:=(W_{1},\cdots,W_{N})$, $C_{\bar{\mu}}$ is the Lipschitz constant introduced in Assumption \ref{assumption:3}(iv), and $\bar{r}_{\theta_N}$ is the ratio of the density of $p(X;\theta_N)$ conditional on $W=1$ and $W=0$. Since
\begin{align}
    \bar{r}_{\theta_N}(p)&=\frac{\operatorname{P}_{\theta_N}(W=1|p(X;\theta_N)=p)/\operatorname{P}_{\theta_N}(W=1)}{\operatorname{P}_{\theta_N}(W=0|p(X;\theta_N)=p)/\operatorname{P}_{\theta_N}(W=0)}\notag\\
    &=\frac{p(1-q_{\theta_N})}{(1-p)q_{\theta_N}}\label{eq:ratio}
\end{align}
and the supports of $p(X;\theta_N)$ for different $N$ are uniformly bounded away from 0 and 1, $\bar{r}_{\theta_N}(p)$ is uniformly bounded in $N$ and $p$. This, in combination with the fact that $M=O(N^v)$ for some $v<1/2$, tells us that 
\begin{align*}
    &\E_{\theta_N}\left[\left|\frac{\ind(C_1N\leq N_0\leq C_2N)}{\sqrt{N}} \sum_{i:W_i=1}\left(2 W_i-1\right)\right.\right.&\\
&\left.\left.\times\left(\bar{\mu}_{\theta_N}\left(1-W_i, p(X_i;\theta_N)\right)-\frac{1}{M} \sum_{j \in \mathcal{J}_{M,\theta_N}(i)} \bar{\mu}_{\theta_N}\left(1-W_i, p(X_j;\theta_N)\right)\right)\right|\mid \boldsymbol{W}_N\right]\to 0
\end{align*}
uniformly in $\boldsymbol{W}_N$. By symmetry, we also have
\begin{align*}
    &\E_{\theta_N}\left[\left|\frac{\ind(C_1N\leq N_0\leq C_2N)}{\sqrt{N}} \sum_{i:W_i=0}\left(2 W_i-1\right)\right.\right.&\\
&\left.\left.\times\left(\bar{\mu}_{\theta_N}\left(1-W_i, p(X_i;\theta_N)\right)-\frac{1}{M} \sum_{j \in \mathcal{J}_{M,\theta_N}(i)} \bar{\mu}_{\theta_N}\left(1-W_i, p(X_j;\theta_N)\right)\right)\right|\mid \boldsymbol{W}_N\right]\to 0
\end{align*}
uniformly in $\boldsymbol{W}_N$. Accordingly
\begin{align*}
    &\E_{\theta_N}\left[\left|\frac{\ind(C_1N\leq N_0\leq C_2N)}{\sqrt{N}} \sum_{i=1}^{N}\left(2 W_i-1\right)\right.\right.&\\
&\left.\left.\times\left(\bar{\mu}_{\theta_N}\left(1-W_i, p(X_i;\theta_N)\right)-\frac{1}{M} \sum_{j \in \mathcal{J}_{M,\theta_N}(i)} \bar{\mu}_{\theta_N}\left(1-W_i, p(X_j;\theta_N)\right)\right)\right|\right]\to 0.
\end{align*}
As a corollary,
\begin{align*}
    &\frac{\ind(C_1N\leq N_0\leq C_2N)}{\sqrt{N}} \sum_{i=1}^{N}\left(2 W_i-1\right)&\\
&\times\left(\bar{\mu}_{\theta_N}\left(1-W_i, p(X_i;\theta_N)\right)-\frac{1}{M} \sum_{j \in \mathcal{J}_{M,\theta_N}(i)} \bar{\mu}_{\theta_N}\left(1-W_i, p(X_j;\theta_N)\right)\right)=o_{\operatorname{P}_{\theta_N}}(1).
\end{align*}
This completes the proof.
\end{proof}
\begin{remark}
    Actually, this lemma still holds when we replace $\theta_N$ with any sequence $\tilde{\theta}_N\to\theta^*$. Furthermore, the convergence of $R_N(\tilde{\theta}_N)$ under $\operatorname{P}_{\tilde{\theta}_N}$ is uniform in a family of $\tilde{\theta}_N$ as long as $\tilde{\theta}_N\to\theta^*$ uniformly. The use of the constant $C_{\bar{\mu}}$ here makes it necessary to enforce Assumption \ref{assumption:3}(iv)(a) in the article.
\end{remark}

\begin{proof}[Proof of Lemma \ref{lem:7}]
According to \eqref{eq:ratio} in the proof of Lemma \ref{lem:3}, for any neighborhood of $\theta^*$, there is a constant $C_h>0$ s.t. $1/C_h\leq \bar{r}_{\theta}(p)\leq C_h$ holds for any $\theta$ there. Then, according to Lemma S.8 in \cite{abadie2016matching}, we know that as long as $C_1N\leq N_0\leq C_2N$, $M\leq N_0$ and $M\leq N_1$, 
\begin{align*} 
\E_{\theta_N}\left[\left(\frac{K_{M,\theta_N}(i)}{M}\right)^{k} \mid W_i=1, N_0\right]\leq& \sum_{r=0}^{k} S(k, r) \left(\frac{N_0}{N_1}\right)^r C_{h}^{r} \frac{(2 M+r-1) !}{(2 M-1)!M^k}\\
\leq&\sum_{r=0}^{k} S(k, r) \left(\frac{N_0}{N_1}\right)^r C_{h}^{r} (r+1)^r,
\end{align*}
where $S(k,r)$ is the Stirling number of the second kind. By symmetry, we can bound the conditional expectation with regard to $W_i=0$, which yields this lemma.
\end{proof}
\begin{remark}
    Actually, the boundedness here is uniform with regard to the $\theta$ in a neighborhood of $\theta^*$. 
\end{remark}

\begin{proof}[Proof of Lemma \ref{lem:9}]
To prove this lemma, we need the following result.

\begin{lemma}[A modified version of Lemma S.6 in \cite{abadie2016matching}]\label{lem:5}
For each $N$, let  $\xi_{1: N}, \ldots, \xi_{N: N}$ be the order statistics for a random sample of size  $N$ from the uniform distribution over $[0,1]$ and let $F_N:[a_N, b_N] \to[0,1]$  be a strictly increasing and absolutely continuous distribution function with derivative $f_N(x)$. Suppose $f_N(\cdot)$'s are uniformly bounded and bounded away from 0 in $x$ and $N$. Suppose  $m_N:[a_N, b_N] \to \mathbb{R}$ are equicontinuous and uniformly bounded. Then, for any $M$ satisfying $M/N\to 0$ and $M\to\infty$ as $N\to\infty$, we have
\begin{align}
&\frac{1}{2M}\sum_{i=M+1}^{N-M} m_N\left(F_N^{-1}\left(\xi_{i: N}\right)\right)\left(\xi_{i+M: N}-\xi_{i-M: N}\right) \stackrel{\P}{\rightarrow} \int_{a_N}^{b_N} m_N(s) f_N(s) {\sf d} s\label{t1}&
\end{align}
and
\begin{align}
&\frac{1}{4M^2}\sum_{i=M+1}^{N-M} m_N\left(F_N^{-1}\left(\xi_{i: N}\right)\right) N\left(\xi_{i+M: N}-\xi_{i-M: N}\right)^{2}\stackrel{\P}{\rightarrow} \int_{a_N}^{b_N} m_N(s) f_N(s) {\sf d} s\label{t2}.&
\end{align}
\end{lemma}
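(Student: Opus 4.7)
The plan is to view both sums as weighted Riemann-type approximations of the target
$$\int_{a_N}^{b_N} m_N(s) f_N(s)\,ds = \int_0^1 g_N(u)\,du, \qquad g_N := m_N \circ F_N^{-1},$$
and to exploit the concentration of the block spacing $\eta_i := \xi_{i+M:N} - \xi_{i-M:N}$ around $2M/N$ when $M \to \infty$. Because $f_N$ is uniformly bounded away from zero, $F_N^{-1}$ is Lipschitz with a uniform-in-$N$ constant, and hence $g_N$ inherits uniform boundedness and equicontinuity from $m_N$.

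The probabilistic input I rely on is the classical representation $(\xi_{1:N},\ldots,\xi_{N:N}) \stackrel{d}{=} (S_1/S_{N+1},\ldots,S_N/S_{N+1})$, where $S_k = E_1 + \cdots + E_k$ and $E_j$ are i.i.d.\ $\mathrm{Exp}(1)$. Then $\eta_i = T_i/S_{N+1}$ with $T_i := \sum_{k=i-M+1}^{i+M} E_k \sim \mathrm{Gamma}(2M,1)$. Combining $\E[T_i] = 2M$, $\mathrm{Var}(T_i) = 2M$, and $S_{N+1}/N \to 1$ a.s.\ yields $\E[\eta_i] = 2M/N + o(M/N)$ and $\mathrm{Var}(\eta_i) = O(M/N^2)$, uniformly in $i$.

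For \eqref{t1}, I would decompose
\begin{align*}
\frac{1}{2M}\sum_{i=M+1}^{N-M} g_N(\xi_{i:N})\eta_i = \frac{1}{N}\sum_{i=M+1}^{N-M} g_N(\xi_{i:N}) + \frac{1}{2M}\sum_{i=M+1}^{N-M} g_N(\xi_{i:N})\left(\eta_i - \frac{2M}{N}\right).
\end{align*}
The first summand differs from $N^{-1}\sum_{j=1}^N g_N(U_j)$, with $U_j$ i.i.d.\ uniform, by only $(2M/N)\|g_N\|_\infty = o(1)$ of boundary mass, and a Chebyshev bound (using boundedness of $g_N$) yields convergence in probability to $\int_0^1 g_N(u)\,du$. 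The error summand has mean $o(1)$ and second moment of order $1/M$, by a Cauchy--Schwarz bound on covariances using $\mathrm{Var}(\eta_i) = O(M/N^2)$; hence it is $o_P(1)$. For \eqref{t2}, the algebraic identity
$$\frac{1}{4M^2}\sum g_N(\xi_{i:N}) N\eta_i^2 = \frac{1}{2M}\sum g_N(\xi_{i:N})\eta_i + \frac{1}{2M}\sum g_N(\xi_{i:N})\eta_i\left(\frac{N\eta_i}{2M}-1\right)$$
reduces the claim to \eqref{t1} plus a second-moment-controlled error, which vanishes by the same spacing-variance estimate applied to $\eta_i (N\eta_i/(2M)-1)$ after expanding the square.

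The main obstacle is the uniformity in $N$: because $m_N$ and $F_N$ change with $N$, the classical Riemann-sum and Glivenko--Cantelli statements must be re-proved for the triangular-array family. This is where the equicontinuity of $\{g_N\}$ plays the decisive role, since it permits replacing $g_N(\xi_{i:N})$ by a deterministic surrogate at a uniform modulus-of-continuity cost; once that replacement is made, all remaining probabilistic work reduces to moment bounds on partial sums of centered $\mathrm{Exp}(1)$'s, for which elementary tools suffice.
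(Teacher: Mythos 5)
Your proposal is correct, and it takes a genuinely different --- and leaner --- route than the paper's. The paper inherits the three-step skeleton of the fixed-$M$ Lemma S.6 of Abadie and Imbens: (a) replace the random argument $m_N(F_N^{-1}(\xi_{i:N}))$ by the deterministic $m_N(F_N^{-1}(i/N))$ using equicontinuity together with $\max_i|\xi_{i:N}-i/N|\to0$; (b) replace the (squared) block-spacing weights by their deterministic means through an exact variance computation that splits the block spacing into $2M$ unit spacings and invokes the Beta moments of Lemma \ref{lem:4}, giving an $O(1/N)$ bound; (c) pass from the resulting deterministic Riemann sum to $\int_{a_N}^{b_N} m_N f_N$. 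You instead keep the random argument throughout: writing $\eta_i:=\xi_{i+M:N}-\xi_{i-M:N}$ and $g_N:=m_N\circ F_N^{-1}$, your main term $\frac1N\sum_i g_N(\xi_{i:N})$ is, up to $O(M/N)$ boundary mass, exactly the i.i.d.\ average $\frac1N\sum_j g_N(U_j)$, so Chebyshev alone yields the limit $\int_0^1 g_N=\int_{a_N}^{b_N}m_Nf_N$ with no Riemann-sum or equicontinuity argument; the centered remainder is killed by an all-pairs Cauchy--Schwarz using $\E[(\eta_i-2M/N)^2]=O(M/N^2)$ (exact from the $\mathrm{Beta}(2M,N-2M+1)$ law), giving $O(1/M)$; and \eqref{t2} reduces to \eqref{t1} via the identity $N\eta_i^2/(4M^2)=(\eta_i/(2M))\cdot(N\eta_i/(2M))$ together with the same moment bounds. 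What your route buys is brevity and the removal of the equicontinuity hypothesis from this step (uniform boundedness of $m_N$ suffices); what it gives up is that your $O(1/M)$ bound vanishes only because $M\to\infty$, so, unlike the paper's $O(1/N)$ bound, it cannot recover the fixed-$M$ version of the lemma. Two minor points: your closing remark that equicontinuity of $g_N$ is \emph{decisive} for replacing $g_N(\xi_{i:N})$ by a deterministic surrogate contradicts your own decomposition, which never needs such a surrogate; and deducing the moments of $\eta_i=T_i/S_{N+1}$ from $\operatorname{Var}(T_i)=2M$ and $S_{N+1}/N\to1$ is loose since $T_i$ and $S_{N+1}$ are dependent --- it is cleaner to quote the exact Beta law directly, as the paper's Lemma \ref{lem:4} does.
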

\begin{lemma}[An enhanced version of Lemma S.7 in \cite{abadie2016matching}]\label{lem:6}
Suppose $M=O(N^v)$ for some $v<1/2$. For each $N$, let  $\operatorname{P}_N$  be a distribution with interval support $[a_N, b_N]$, distribution function  $G_N$, and density function  $g_N$  that is continuous on  $[a_N, b_N]$. Let $\tilde{X}_{1}, \ldots, \tilde{X}_{N}$ be independently sampled from $\operatorname{P}_N$, and let  $\tilde{X}_{j: N}$  be the corresponding $j$-th order statistic. Let $\tilde{X}_{j: N}=a_N$  if  $j<1$  and  $\tilde{X}_{j: N}=b_N$  if  $j>N$. Let  $V_{N k}$  be the rank of the $k$-th observation in $\tilde{X}_{1}, \ldots, \tilde{X}_{N}$. Let  $\tilde{P}_{N k}$  be  defined as the probability that  the $k$-th observation  will be a match for an out-of sample observation with continuous density $f_N$:
\begin{align*}
\tilde{P}_{N k} := & \int_{a_N}^{\left(\tilde{X}_{V_{N k}: N}+\tilde{X}_{V_{N k}+M: N}\right) / 2} f_N(x) {\sf d} x \ind(V_{N k} \leq M)&\\
& +\int_{\left(\tilde{X}_{V_{N k}: N}+\tilde{X}_{V_{N k}-M: N}\right) / 2}^{\left(\tilde{X}_{V_{N k}: N}+\tilde{X}_{V_{N k}+M: N}\right) / 2} f_N(x) {\sf d} x \ind(M+1 \leq V_{N k} \leq N-M)&\\
& +\int_{\left(\tilde{X}_{V_{N k}: N}+\tilde{X}_{V_{N k}-M: N}\right) / 2}^{b_N} f_N(x) {\sf d} x \ind(N-M+1 \leq V_{N k}).&
\end{align*}
Assume that $f_N(x)$ and $g_N(x)$ are uniformly bounded and bounded away from 0 in $x$ and $N$ on $[a_N, b_N]$. Let  $m_N(\cdot)$  be uniformly bounded functions with domain $[a_N, b_N]$. Then, under $\operatorname{P}_{N}$,
\begin{align*}
&\sum_{k=1}^{N} m_N\left(\tilde{X}_k\right)\left(\frac{\tilde{P}_{N k}}{M}-\frac{f_N\left(\tilde{X}_k\right)}{g_N\left(\tilde{X}_k\right)} \frac{G_N\left(\tilde{X}_{V_{N k}+M: N}\right)-G_N\left(\tilde{X}_{V_{N k}-M: N}\right)}{2M}\right) =o_{\operatorname{P}_N}(1)& 
\end{align*}
and
\begin{align*}
&\sum_{k=1}^{N} m_N\left(\tilde{X}_k\right) N\left(\left(\frac{\tilde{P}_{N k}}{M}\right)^{2}-\left(\frac{f_N\left(\tilde{X}_k\right)}{g_N\left(\tilde{X}_k\right)} \frac{G_N\left(\tilde{X}_{V_{N k}+M: N}\right)-G_N\left(\tilde{X}_{V_{N k}-M: N}\right)}{2M}\right)^{2}\right) =o_{\operatorname{P}_N}(1).&
\end{align*}
\end{lemma}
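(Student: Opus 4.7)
The plan is to adapt the proof of Lemma S.7 in \cite{abadie2016matching} while tracking dependence on $N$ carefully enough to upgrade the original pointwise statement to a uniform-in-$N$ one. I would begin with two preliminary reductions. First, pass from $\tilde X_i$ to the uniform variates $U_i:=G_N(\tilde X_i)$, so that $U_{j:N}=G_N(\tilde X_{j:N})$ and the $G_N$-spacing $\Delta_{Nk}:=U_{V_{Nk}+M:N}-U_{V_{Nk}-M:N}$ can be controlled by standard uniform-order-statistics machinery. Second, split the sum over $k$ into the two boundary strips $V_{Nk}\leq M$ and $V_{Nk}\geq N-M+1$, and the bulk region $M+1\leq V_{Nk}\leq N-M$.

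For the boundary strips there are only $2M$ indices. Since $f_N/g_N$ is uniformly bounded, $\tilde P_{Nk}$ is dominated by the $G_N$-mass of an interval containing at most $M$ sample points, so $\tilde P_{Nk}=O_P(M/N)$ uniformly, and hence $|\tilde P_{Nk}/M|=O_P(1/N)$; the same bound holds for the approximant. Consequently the boundary contribution to the first display is $O_P(M/N)=o_P(1)$, and to the second display $O_P(M^2/N^2)=o_P(1)$. All constants depend only on the uniform bounds on $f_N,g_N$, so these estimates are automatically uniform in $N$.

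For the bulk, I would make two first-order expansions. Writing $\tilde P_{Nk}=\int_L^R f_N(x)\,dx$ with $R-L=(\tilde X_{V_{Nk}+M:N}-\tilde X_{V_{Nk}-M:N})/2$, uniform-spacings estimates give $\max_k\Delta_{Nk}=O_P(M\log N/N)=o_P(1)$, and hence $\max_k(R-L)=o_P(1)$ since $g_N$ is bounded away from $0$. Continuity of $f_N$ then yields $\int_L^R f_N\,dx=f_N(\tilde X_k)(R-L)+\omega_f(R-L)\cdot O(R-L)$, and the mean value theorem applied to $G_N$ gives $R-L=\Delta_{Nk}/(2 g_N(\tilde X_k))+\omega_g(R-L)\cdot O(R-L)$. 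Combining produces a per-$k$ error of size $o_P(1/N)$; after summation over the bulk the first identity follows because $\sum_k\Delta_{Nk}=O_P(M)$. For the second display I factor $a^2-b^2=(a-b)(a+b)$ with $a=\tilde P_{Nk}/M$ and $b$ the approximant; since $a+b=O_P(1/N)$ in the bulk while $a-b=o_P(1/N)$, the per-$k$ summand is $o_P(1/N)$ after multiplication by $N$, whose sum over the bulk is $o_P(1)$.

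The principal obstacle is keeping every modulus-of-continuity estimate uniform in $N$: the lemma as stated assumes only that $f_N,g_N$ are continuous on $[a_N,b_N]$, which does not by itself deliver a uniform modulus. In every intended use inside the paper, however, $f_N$ and $g_N$ arise from the parametric family $\{p(\cdot;\theta):\theta\in U\}$ on a neighbourhood $U$ of $\theta^*$, and Assumption \ref{assumption:3}(ii) together with compactness of $U$ supplies an equicontinuity modulus independent of $N$; this is what makes the Taylor remainders above genuinely $o_P(1)$. Tracking how the exponent in $M=O(N^v)$, $v<1/2$, interacts with the uniform maximum-spacing bound is the other quantitative point that needs care, and it is exactly that slack below $1/2$ which makes the summed remainder vanish.
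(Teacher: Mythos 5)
Your overall strategy---a first-order (mean-value) expansion of $\tilde P_{Nk}$ and of the $G_N$-increment around $\tilde X_k$, control of the expansion error through the maximum $2M$-spacing, and separate treatment of the $2M$ boundary indices---is essentially the paper's. The paper likewise applies the mean value theorem to produce intermediate points $\bar X_{f,k,N,M},\bar X_{g,k,N,M}$ in the catchment interval, invokes Lemma \ref{lem:2}(i) to get $\max_k|\tilde X_{k:N}-\tilde X_{k+M:N}|=o_{\operatorname{P}_N}(1)$, and concludes by continuity that the per-observation error $Z_{Nk}^{(1)}:=m_N(\tilde X_k)N\bigl(\tilde P_{Nk}/M-\cdots\bigr)$ is $o_{\operatorname{P}_N}(1)$. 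Your closing observation about the missing equicontinuity-in-$N$ of $f_N,g_N$ is also well taken: the statement only assumes continuity, the paper's proof needs a modulus that does not degenerate with $N$, and the paper addresses this only in a remark about families satisfying the hypotheses ``uniformly.''

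The genuine weak point is the passage from per-term smallness to smallness of the sum. For the first display your pathwise bound closes: the error is at most $\omega_N\cdot C\sum_k(R_k-L_k)/M$ with $\sum_k(R_k-L_k)=O(M)$, giving $o_{\operatorname{P}_N}(1)$. For the second display, however, ``$N$ terms each $o_P(1/N)$ sum to $o_P(1)$'' is not a valid inference, and the natural pathwise estimate does not deliver it: with $a-b=\omega_N\,O((R_k-L_k)/M)$ and $a+b=O((R_k-L_k)/M)$ one gets
\begin{align*}
\sum_k N|a-b||a+b|\;\lesssim\; N\,\omega_N\,\frac{\max_k(R_k-L_k)\sum_k(R_k-L_k)}{M^2}\;=\;\omega_N\,O_{\operatorname{P}_N}(\log N),
\end{align*}
since $\max_k(R_k-L_k)=O_{\operatorname{P}_N}(M\log N/N)$ (note also that this means your uniform claim $a+b=O_P(1/N)$ carries a $\log N$ factor). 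Without a rate on $\omega_N$ this is not $o_{\operatorname{P}_N}(1)$. The fix is exactly the moment machinery the paper deploys: either bound $\E\bigl[\sum_k(U_{V_{Nk}+M:N}-U_{V_{Nk}-M:N})^2\bigr]=O(M^2/N)$ using the Beta-distribution identity of Lemma \ref{lem:4} (which removes the $\log N$), or, as the paper does, show that $\E_N|Z_{Nk}^{(1)}|^{2r}$ is uniformly bounded for every $r$ via the binomial bound $\E_N|N\tilde P_{Nk}/M|^r\leq C_h^r(2M+r-1)!/((2M-1)!M^r)$, so that $Z_{Nk}^{(1)}=o_{\operatorname{P}_N}(1)$ upgrades to $\E_N|Z_{Nk}^{(1)}|^2\to0$ by uniform integrability and the averaged sum converges in $L^1$. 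Once you add either of these ingredients your argument is complete.
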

\begin{lemma}[A modified version of Lemma S.10 in \cite{abadie2016matching}] \label{lem:8}
Assume Assumptions \ref{assumption:1}-\ref{assumption:4} hold. Assume that $\bar{m}(w,p;\theta_N)$ defined in Lemma \ref{lem:9} is uniformly bounded and equicontinuous. For each $N$, let $P_{N k}$ be the probability that, among the $N$ samples, the $k$-th observation is used as a match for any particular observation in the opposite treatment arm, conditional on $ \mathbf{F}_{N,W_k}=(p(X_i;\theta_N))_{i:W_i=W_k}$, i.e.,
\begin{align*}
P_{N k}:=\operatorname{P}_{\theta_N}\left(k \in \mathcal{J}_{M,\theta_N}(i)|W_i=1-W_k,\mathbf{F}_{N,W_k}\right).
\end{align*}
Assume $M\to\infty$ as $N\to\infty$ and $M/N\to0$. Then, we have
\begin{align*}
&\frac{\ind(C_1N\leq N_0 \leq C_2N)}{N_{w}} \sum_{i: W_i=w} \bar{m}\left(w, p(X_i;\theta_N)\right) \left(\frac{K_{M,\theta_N}(i)}{M}-N_{1-w} \frac{P_{N i}}{M}\right)=o_{\operatorname{P}_{\theta_N}}(1)&
\end{align*}
and
\begin{align*}
&\frac{\ind(C_1N\leq N_0 \leq C_2N)}{N_{w}} \sum_{i: W_i=w} \bar{m}\left(w, p(X_i;\theta_N)\right) \left(\left(\frac{K_{M,\theta_N}(i)}{M}\right)^{2}-N_{1-w}^{2} \left(\frac{P_{N i}}{M}\right)^{2}\right)=o_{\operatorname{P}_{\theta_N}}(1).
\end{align*}
\end{lemma}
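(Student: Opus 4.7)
The plan is to work on the event $\{C_1N\leq N_0\leq C_2N\}$, on which $N_w$ and $N_{1-w}$ are both of order $N$, and to condition on the $\sigma$-field $\mathcal{G}_w:=\sigma(\mathbf{F}_{N,w},\mathbf{W}_N)$ with $\mathbf{F}_{N,w}:=(p(X_i;\theta_N))_{i:W_i=w}$. Since the sample is i.i.d.\ under $\operatorname{P}_{\theta_N}$, conditionally on $\mathcal{G}_w$ the propensity scores $\{p(X_j;\theta_N):W_j=1-w\}$ are i.i.d., and $\mathcal{J}_{M,\theta_N}(j)$ depends on the rest of the data only through $p(X_j;\theta_N)$ together with the frozen values in $\mathbf{F}_{N,w}$. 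Hence the indicators $\{\ind(i\in\mathcal{J}_{M,\theta_N}(j)):W_j=1-w\}$ are, as a collection indexed by $j$, conditionally independent given $\mathcal{G}_w$, each with conditional mean $P_{Ni}$ by the very definition of $P_{Nk}$. Writing $K_{M,\theta_N}(i)=\sum_{j:W_j=1-w}\ind(i\in\mathcal{J}_{M,\theta_N}(j))$ then identifies $K_{M,\theta_N}(i)-N_{1-w}P_{Ni}$ as a conditionally centered sum of independent indicators.

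For the first claim, I interchange the order of summation and rewrite the quantity of interest as
\[
T_N:=\frac{\ind(C_1N\leq N_0\leq C_2N)}{N_wM}\sum_{j:W_j=1-w}Y_j,\qquad Y_j:=\sum_{i:W_i=w}\bar m(w,p(X_i;\theta_N))\bigl[\ind(i\in\mathcal{J}_{M,\theta_N}(j))-P_{Ni}\bigr],
\]
where the $Y_j$ are conditionally independent given $\mathcal{G}_w$ with mean zero. The combinatorial identity $\sum_{i:W_i=w}\ind(i\in\mathcal{J}_{M,\theta_N}(j))=M$---holding a.s.\ because propensity scores have a Lebesgue density and hence no ties occur---together with its conditional expectation $\sum_{i:W_i=w}P_{Ni}=M$ gives $|Y_j|\leq 2MC_m$ with $C_m:=\sup|\bar m|<\infty$. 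Consequently $\mathrm{Var}(T_N\mid\mathcal{G}_w)\leq N_{1-w}(2MC_m)^2/(N_wM)^2=O(1/N)$ on the event $\{C_1N\leq N_0\leq C_2N\}$, and a conditional Chebyshev bound followed by taking expectations yields $T_N=o_{\operatorname{P}_{\theta_N}}(1)$.

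For the second claim, I apply the factorization
\[
\Bigl(\tfrac{K_{M,\theta_N}(i)}{M}\Bigr)^2-\Bigl(\tfrac{N_{1-w}P_{Ni}}{M}\Bigr)^2=\tfrac{K_{M,\theta_N}(i)-N_{1-w}P_{Ni}}{M}\cdot\tfrac{K_{M,\theta_N}(i)+N_{1-w}P_{Ni}}{M}
\]
and invoke Cauchy-Schwarz on the average over $i:W_i=w$. The ``sum'' factor is $O_{\operatorname{P}_{\theta_N}}(1)$ in the $L^2$-average by Lemma \ref{lem:7} (which bounds the average of $(K_{M,\theta_N}(i)/M)^2$, and by Jensen also the average of $(N_{1-w}P_{Ni}/M)^2=(\mathbb{E}[K_{M,\theta_N}(i)/M\mid\mathcal{G}_w])^2$). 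For the ``difference'' factor I compute conditionally: $\mathbb{E}\bigl[((K_{M,\theta_N}(i)-N_{1-w}P_{Ni})/M)^2\mid\mathcal{G}_w\bigr]=N_{1-w}P_{Ni}(1-P_{Ni})/M^2$, and averaging over $i:W_i=w$ while using $\sum_{i:W_i=w}P_{Ni}=M$ gives a conditional bound of order $N_{1-w}/(N_wM)=O(1/M)\to 0$ since $M\to\infty$. Markov's inequality then forces the difference factor to be $o_{\operatorname{P}_{\theta_N}}(1)$, and the product vanishes in probability.

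The one delicate ingredient is the uniform $O(M)$ bound on $|Y_j|$: a naive triangle-inequality bound would give $|Y_j|=O(N_w)$, which is too crude by a factor of $N/M$; it is the hard combinatorial constraint $\sum_{i:W_i=w}\ind(i\in\mathcal{J}_{M,\theta_N}(j))=M$ that collapses $Y_j$ to size $O(M)$, which is exactly what is needed to compensate for the $1/M$ in the normalization. Once this observation is in hand, both claims reduce to familiar conditional second-moment computations combined with Lemma \ref{lem:7}.
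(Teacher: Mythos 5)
Your proof is correct, but it takes a genuinely different route from the paper's. The paper works directly with the $i$-indexed sum and computes its conditional variance given $(\mathbf{W}_N,\mathbf{F}_{N,w})$: the diagonal terms $\mu_{4,i}-\mu_{2,i}^2$ are controlled by Lemma \ref{lem:7}, while the off-diagonal covariances $\mu_{2,2,i,j}-\mu_{2,i}\mu_{2,j}$ are handled by the catchment-interval argument inherited from Abadie--Imbens (they can be positive only if the catchment intervals of $i$ and $j$ overlap, and there are at most $2(M-1)N_0$ overlapping pairs), followed by a Cauchy--Schwarz step involving eighth moments of $K_{M,\theta_N}(i)/M$; the residual $N_{1-w}P_{Ni}(1-P_{Ni})/M^2$ term is then disposed of using Lemmas \ref{lem:5} and \ref{lem:6}. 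You bypass the catchment-interval combinatorics entirely. For the first display, interchanging the order of summation turns the statistic into a normalized sum of conditionally independent, mean-zero variables $Y_j$, and the hard constraint $\sum_{i:W_i=w}\ind(i\in\mathcal{J}_{M,\theta_N}(j))=M$ (valid almost surely by absence of ties, together with its conditional mean $\sum_{i}P_{Ni}=M$) gives $|Y_j|\le 2MC_m$; this yields a conditional variance of order $1/N$, i.e.\ a rate $O_{\operatorname{P}_{\theta_N}}(N^{-1/2})$ that is stronger than needed and requires no covariance bookkeeping --- the paper leaves this first claim as ``similar but simpler.'' For the second display, the factorization $a^2-b^2=(a-b)(a+b)$ plus Cauchy--Schwarz reduces everything to the exact binomial conditional variance $N_{1-w}P_{Ni}(1-P_{Ni})/M^2$, whose average over $i$ is $O(1/M)=o(1)$ precisely because $\sum_i P_{Ni}=M$ and $M\to\infty$, together with Lemma \ref{lem:7} (and Jensen) for the $O_{\operatorname{P}_{\theta_N}}(1)$ factor. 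Both arguments rest on the same two structural facts --- that $K_{M,\theta_N}(i)$ is conditionally $B(N_{1-w},P_{Ni})$ and that moments of $K_{M,\theta_N}(i)/M$ are uniformly bounded --- but yours trades the paper's overlap counting and higher-moment estimates for the observation that each unit $j$ in the opposite arm contributes exactly $M$ match indicators, which is arguably more transparent and covers both displays in a unified way.
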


Get back to the proof of Lemma \ref{lem:9}. We will combine Lemmas \ref{lem:5}, \ref{lem:6}, and \ref{lem:8} to obtain this lemma, with $p(X_i;\theta_N)$ now viewed as the covariate in these lemmas. Let $g_w(p)$ be the probability density function of $p(X;\theta_N)$ conditional on $W=w$. Let $g(p)$ be the probability density function of $p(X;\theta_N)$. Since 
\begin{align*}
    g_0(p)=\frac{g(p)(1-p)}{1-q_{\theta_N}} ~~~{\rm and}~~~g_1(p)=\frac{g(p)p}{q_{\theta_N}},
\end{align*}
$g_w(p)$ is uniformly bounded and bounded away from 0 in $p$, $w$ and $N$. Therefore, the conditions of Lemma \ref{lem:5}, Lemma \ref{lem:6}, and Lemma \ref{lem:8} are satisfied and we then obtain
\begin{align*}
    &\frac{\ind(C_1N\leq N_0 \leq C_2N)}{N_{w}} \sum_{i: W_i=w} \bar{m}(w,p(X_i;\theta_N)) \frac{K_{M,\theta_N}(i)}{M}\notag \\
    \stackrel{\operatorname{P}_{\theta_N}}{\rightarrow}&\frac{\ind(C_1N\leq N_0\leq C_2N)N_{1-w}}{N_w}\E_{\theta_N}\left[\bar{m}(w,p(X;\theta_N))\left(\frac{g_\ind(p(X;\theta_N))}{g_0(p(X;\theta_N))}\right)^{1-2 w} \mid W=w\right]&\\
    &\times \frac{\ind(C_1N\leq N_0 \leq C_2N)}{N_{w}} \sum_{i: W_i=w} \bar{m}(w,p(X_i;\theta_N)) \left(\frac{K_{M,\theta_N}(i)}{M}\right)^{2}\notag\\
    \stackrel{\operatorname{P}_{\theta_N}}{\rightarrow}&\left(\frac{\ind(C_1N\leq N_0\leq C_2N)N_{1-w}}{N_w}\right)^2\E_{\theta_N}\left[\bar{m}(w,p(X;\theta_N))\left(\frac{g_\ind(p(X;\theta_N))}{g_0(p(X;\theta_N))}\right)^{2(1-2 w)} \mid W=w\right]&
\end{align*}
under $\operatorname{P}_{\theta_N}$. The above results, when combined with the fact that
\begin{align*}
    \frac{g_{1}(p)}{g_0(p)}=\frac{p(1-q_{\theta_N})}{(1-p)q_{\theta_N}}~~~{\rm and}~~~
    \ind(C_1N\leq N_0\leq C_2N)\frac{N_{1-w}}{N_w}\overset{\operatorname{P}_{\theta_N}}{\to}\left(\frac{q_{\theta_N}}{1-q_{\theta_N}}\right)^{1-2w},
\end{align*}
leads to the desired result.
\end{proof}
\begin{remark}
    It's based on Lemma S.11 in \cite{abadie2016matching}. Actually, this lemma still holds when we replace $\theta_N$ with any sequence $\tilde{\theta}_N\to\theta^*$. Furthermore, the convergences obtained in this lemma are uniform in a family of $\tilde{\theta}_N$ as long as $\tilde{\theta}_N\to\theta^*$ uniformly.
\end{remark}

\begin{proof}[Proof of Lemma \ref{lem:10}]

To prove this lemma, we use the martingale representation of matching estimators as in \cite{abadie2016matching}. Employing the Cram\'{e}r-Wold device and consider $C_{N}=z_{1} D_{N}\left(\theta_{N}\right)+z_{2}^{\prime} \Delta_{N}\left(\theta_{N}\right)$ with the understanding by \cite{abadie2016matching} that
\begin{align*}
C_{N}= & z_{1} \frac{1}{\sqrt{N}} \sum_{i=1}^{N}\left(\bar{\mu}_{\theta_{N}}\left(1, p(X_i;\theta_N)\right)-\bar{\mu}_{\theta_{N}}\left(0, p(X_i;\theta_N)\right)-\tau\right) &\\
& +z_{1} \frac{\ind(C_1N\leq N_0\leq C_2N)}{\sqrt{N}} \sum_{i=1}^{N}\left(2 W_{i}-1\right)\left(1+\frac{K_{M, \theta_{N}}(i)}{M}\right) &\\
&~~~~ \times\left(Y_{i}-\bar{\mu}_{\theta_{N}}\left(W_{i}, p(X_i;\theta_N)\right)\right) &\\
& +z_{2}^{\prime} \frac{1}{\sqrt{N}} \sum_{i=1}^{N} X_{i} \frac{W_{i}-p(X_i;\theta_N)}{p(X_i;\theta_N)\left(1-p(X_i;\theta_N)\right)} f\left(X_{i}^{\prime} \theta_{N}\right). &
\end{align*}

Let $C_{N}=\sum_{k=1}^{3 N} \xi_{k}$ with $\xi_k$'s taking the values in \citet[Pages 802-803]{abadie2016matching}. By using Holder's Inequality, Lemma \ref{lem:7}, and our Assumption \ref{assumption:3}(iv), it holds true that
\begin{align*}
&\sum_{k=1}^{3 N} \E_{\theta_{N}}\left[\left|\xi_{k}\right|^{2+\delta}\right]\to 0&
\end{align*}
for some $\delta>0$. As a result, we obtain (by leveraging the martingale central limit theorem) that, under  $\operatorname{P}_{\theta_{N}}$,
\begin{align*}
&C_{N} \stackrel{d}{\rightarrow} N\left(0, \sigma_{1}^{2}+\sigma_{2}^{2}+\sigma_{3}^{2}\right),&
\end{align*}
where
\begin{align*}
\sigma_{1}^{2}=&\operatorname{plim} \sum_{k=1}^{N} \E_{\theta_{N}}\left[\xi_{k}^{2} \mid \mathcal{F}_{k-1}\right],& \\
\sigma_{2}^{2}=&\operatorname{plim} \sum_{k=N+1}^{2 N} \E_{\theta_{N}}\left[\xi_{k}^{2} \mid \mathcal{F}_{k-1}\right],& \\
\sigma_{3}^{2}=&\operatorname{plim} \sum_{k=2 N+1}^{3 N} \E_{\theta_{N}}\left[\xi_{k}^{2} \mid \mathcal{F}_{k-1}\right],&
\end{align*}
where $\operatorname{plim}$ means the probability limit under $\operatorname{P}_{\theta_N}$. Same as \cite{abadie2016matching}, we have
\begin{align*}
\sigma_{1}^{2}= & z_{1}^{2} \E\left[\left(\bar{\mu}\left(1, p(X)\right)-\bar{\mu}\left(0, p(X)\right)-\tau\right)^{2}\right]  +z_{2}^{\prime} \E\left[\frac{f^{2}\left(X^{\prime} \theta^{*}\right)}{p(X)\left(1-p(X)\right)} \E\left[X \mid p(X)\right] \E\left[X^{\prime} \mid p(X)\right]\right] z_{2}.&
\end{align*}
By Lemma \ref{lem:9} and the fact that $W$  is independent of $X$ given  $p(X;\theta_N)$ under $\operatorname{P}_{\theta_N}$, we have
\begin{align*}
\sigma_{2}^{2}= & z_{2}^{\prime} \E\left[\frac{f^{2}\left(X^{\prime} \theta^{*}\right)}{p(X)\left(1-p(X)\right)} \operatorname{Var}\left(X \mid p(X)\right)\right] z_{2}& \\
& +z_{1}^{2} \operatorname{plim}\frac{N_1}{N}\E_{\theta_N}\left[\left(\mu(1, X)-\bar{\mu}_{\theta_N}(1,p(X; \theta_{N})\right)^2\left(1+\frac{1-p(X;\theta_N)}{p(X;\theta_N)}\right)^2\mid W=1\right]&\\
&+z_{1}^{2} \operatorname{plim}\frac{N_0}{N}\E_{\theta_N}\left[\left(\mu(0, X)-\bar{\mu}_{\theta_N}(0,p(X; \theta_{N}))\right)^2\left(1+\frac{p(X;\theta_N)}{1-p(X;\theta_N)}\right)^2\mid W=0\right]&\\
&+2z_2'z_1 \operatorname{plim}\frac{N_1}{N}\E_{\theta_N}\left[ \left(X-\E_{\theta_N}\left[X \mid p(X;\theta_N)\right]\right)\left(\mu(1, X)-\bar{\mu}_{\theta_N}(1,p(X; \theta_{N}))\right)\right. &\\
& \left.\times \frac{p(X;\theta_{N}) f\left(X^{\prime} \theta_{N}\right)}{p(X;\theta_N)\left(1-p(X;\theta_N)\right)}\left(1+\frac{1-p(X;\theta_N)}{p(X;\theta_N)}\right)\mid W=1\right]& \\
&+2z_2'z_1 \operatorname{plim}\frac{N_0}{N}\E_{\theta_N}\left[ \left(X-\E_{\theta_N}\left[X \mid p(X;\theta_N)\right]\right)\left(\mu(0, X)-\bar{\mu}_{\theta_N}(0,p(X; \theta_{N}))\right)\right. &\\
& \left.\times \frac{(1-p(X;\theta_N)) f\left(X^{\prime} \theta_{N}\right)}{p(X;\theta_N)\left(1-p(X;\theta_N)\right)}\left(1+\frac{p(X;\theta_N)}{1-p(X;\theta_N)}\right)\mid W=0\right]& \\
= & z_{2}^{\prime} \E\left[\frac{f^{2}\left(X^{\prime} \theta^{*}\right)}{p(X)\left(1-p(X)\right)} \operatorname{Var}\left(X \mid p(X)\right)\right] z_{2}& \\
& +z_{1}^{2} \operatorname{plim} \E_{\theta_N}\left[\left(\mu(1, X)-\bar{\mu}_{\theta_N}(1,p(X; \theta_{N}))\right)^2\left(\frac{1}{p(X;\theta_N)}\right)^2W\right]&\\
&+z_{1}^{2} \operatorname{plim} \E_{\theta_N}\left[\left(\mu(0, X)-\bar{\mu}_{\theta_N}(0,p(X; \theta_{N}))\right)^2\left(\frac{1}{1-p(X;\theta_N)}\right)^2(1-W)\right]&\\
&+2z_2'z_1 \operatorname{plim} \E_{\theta_N}\left[\left(X-\E_{\theta_N}\left[X \mid p(X;\theta_N)\right]\right)\left(\mu(1, X)-\bar{\mu}_{\theta_N}(1,p(X; \theta_{N}))\right)\right. &\\
& \left.\times \frac{p(X;\theta_{N}) f\left(X^{\prime} \theta_{N}\right)}{p(X;\theta_N)\left(1-p(X;\theta_N)\right)}\frac{W}{p(X;\theta_N)}\right]& \\
&+2z_2'z_1 \operatorname{plim} \E_{\theta_N}\left[ \left(X-\E_{\theta_N}\left[X \mid p(X;\theta_N)\right]\right)\left(\mu(0, X)-\bar{\mu}_{\theta_N}(0,p(X; \theta_{N}))\right)\right. &\\
& \left.\times \frac{(1-p(X;\theta_N)) f\left(X^{\prime} \theta_{N}\right)}{p(X;\theta_N)\left(1-p(X;\theta_N)\right)}\frac{1-W}{1-p(X;\theta_N)}\right]& \\
=&z_{2}^{\prime} \E\left[\frac{f^{2}\left(X^{\prime} \theta^{*}\right)}{p(X)\left(1-p(X)\right)} \operatorname{Var}\left(X \mid p(X)\right)\right] z_{2}&\\
&+z_1^2\E\left[\frac{\operatorname{Var}\left(\mu(1, X) \mid p(X)\right)}{1-p(X)}+\frac{\operatorname{Var}\left(\mu(0, X) \mid p(X)\right)}{1-p(X)}\right] \\
& +2 z_{2}^{\prime} \E\left[\left(\frac{\operatorname{Cov}\left(X, \mu(1, X) \mid p(X)\right)}{p(X)}\right.\right. \\
& \left.\left.+\frac{\operatorname{Cov}\left(X, \mu(0, X) \mid p(X)\right)}{1-p(X)}\right) p(X)\right] z_{1} .
\end{align*}
As for $\sigma_3^2$, by the analysis in \cite{abadie2016matching} and the methods used in the analysis about $\sigma_2^2$, we know that
\begin{align*}
\sigma_{3}^{2}= & z_{1}^{2} \operatorname{plim} \frac{1}{N} \sum_{i=1}^{N}\left(1+\frac{K_{M, \theta_{N}}(i)}{M}\right)^{2}\sigma^2(W_i, X_i)& \\
= & z_{1}^{2} \operatorname{plim} \frac{1}{N} \sum_{i=1}^{N}\left(1+\frac{K_{M, \theta_{N}}(i)}{M}\right)^{2}\E_{\theta_N}[\sigma^2(W_i, X_i)|p(X_i;\theta_N)]& \\
= & z_{1}^{2} \E\left[\frac{\E\left[\operatorname{Var}(Y \mid X, W=1) \mid p(X)\right]}{p(X)}+\frac{\E\left[\operatorname{Var}(Y \mid X, W=0) \mid p(X)\right]}{1-p(X)}\right].
\end{align*}
Putting them together, we obtain
\begin{align*}
&\sigma_{1}^{2}+\sigma_{2}^{2}+\sigma_{3}^{2}=z_{1}^{2} \sigma^{2}+z_{2}^{\prime} I_{\theta^{*}} z_{2}+2 z_{2}^{\prime} c z_{1}.&   
\end{align*}
Combining all, under $\operatorname{P}_{\theta_{N}}$, we obtain
\begin{align*}
&\left(\begin{array}{c}
D_{N}\left(\theta_{N}\right) \\
\Delta_{N}\left(\theta_{N}\right)
\end{array}\right) \stackrel{d}{\rightarrow} N\left(\left(\begin{array}{l}
0 \\
0
\end{array}\right),\left(\begin{array}{cc}
\sigma^{2} & c^{\prime} \\
c & I_{\theta^{*}}
\end{array}\right)\right).&
\end{align*}
This completes the proof.
\end{proof}
\begin{remark}
    Actually, this lemma still holds when we replace $\theta_N$ with any sequence $\tilde{\theta}_N\to\theta^*$. Furthermore, the convergence of $(D_N(\tilde{\theta}_N),\delta_N(\tilde{\theta}_N))$ under $\operatorname{P}_{\tilde{\theta}_N}$ is uniform in a family of $\tilde{\theta}_N$ as long as $\tilde{\theta}_N\to\theta^*$ uniformly.
\end{remark}
\section{Proof of the lemmas in Section \ref{sec:proof_direct}}\label{sec:proof_indirect}
\begin{proof}[Proof of Lemma \ref{lem:2}]
Divide the support of  $F_{0}$  and  $F_{1}$  into  $K$  intervals of equal probability,  $1/K$, under  $F_{0}$. With a little bit abuse of notation, let  $Z_{M, n_{0}}$  be the number of intervals that are not occupied by at least  $M$  observations from the sample $\tilde{X}_{0,1}, \cdots, \tilde{X}_{0, n_{0}}$. Let  $\mu_{M, n_{0}}=\E\left[Z_{M, n_{0}}\right]$. Then, as long as $n_0\geq K-1$,
\begin{align*}
\mu_{M, n_{0}} & =\sum_{m=0}^{M-1}K\left(\begin{array}{c}
n_{0} \\
m
\end{array}\right)\left(\frac{1}{K}\right)^{m}\left(1-\frac{1}{K}\right)^{n_{0}-m} &\\
& \leq \sum_{m=0}^{M-1}K\frac{n_{0}^{m}}{m !}\left(\frac{1}{K}\right)^{m}\left(1-\frac{1}{K}\right)^{n_{0}-m}&\\
& \leq M \frac{n_{0}^{M-1}e^M}{M^MK^{M-1}}\left(1-\frac{1}{K}\right)^{n_{0}-M+1}&\\
&\leq M \frac{n_{0}^{M-1}e^M}{(K-1)^{M-1}M^M}\exp\left(-\frac{n_0}{K}\right)&\\
&=\exp\left(\log M+1+(M-1) \log \left(\frac{n_0e}{(K-1)M}\right)-\frac{n_0}{K}\right).&
\end{align*}
According to Markov's inequality, there is
\begin{align}\label{lem2.1}
&\operatorname{P}\left(Z_{M, n_{0}}>0\right) \leq \mu_{M, n_{0}} \leq \exp\left(\log M+1+(M-1) \log \left(\frac{n_0e}{(K-1)M}\right)-\frac{n_0}{K}\right).&
\end{align}
For  $0 \leq k \leq K$ , let  $c_{n_{0}, k}$  be the point in the support of  $F_{0}$  such that  $c_{n_{0}, k}=F_{0}^{-1}\left(k /K\right)$. Especially, let $c_{n_{0}, -1}=0$ and $c_{n_0,n_0+1}=1$. Then, if the support of  $F_{0}$  and  $F_{1}$  is an interval inside $[0,1]$, we have
\begin{align*}
&\E\left[\sum_{m=1}^{M}\left|U_{n_{0}, n_{1}, i}\right|_{(m)} \mid Z_{M, n_{0}}=0\right]&\\
\leq &\sum_{k=1}^{K} M\left(c_{n_{0}, k+1}-c_{n_{0}, k-2}\right) \operatorname{P}\left(c_{n_{0}, k-1} \leq \tilde{X}_{1, i} \leq c_{n_{0}, k} \mid Z_{M, n_{0}}=0\right)&\\
\leq &\frac{M \bar{r}}{K} \sum_{k=1}^{K}\left(c_{n_{0}, k+1}-c_{n_{0}, k-2}\right)&\\
\leq &\frac{3M \bar{r}}{K},&
\end{align*}
and furthermore,
\begin{align}
&\E\left[\frac{1}{\sqrt{N}} \sum_{i=1}^{n_{1}} \frac{1}{M} \sum_{m=1}^{M}\left|U_{n_{0}, n_{1}, i}\right|_{(m)}\right]\notag&\\
=&\E\left[\frac{1}{\sqrt{N}} \sum_{i=1}^{n_{1}} \frac{1}{M} \sum_{m=1}^{M}\left|U_{n_{0}, n_{1}, i}\right|_{(m)} \mid Z_{M, n_{0}}=0\right] \operatorname{P}\left(Z_{M, n_{0}}=0\right)& \notag\\
&+\E\left[\frac{1}{\sqrt{N}} \sum_{i=1}^{n_{1}} \frac{1}{M} \sum_{m=1}^{M}\left|U_{n_{0}, n_{1}, i}\right|_{(m)} \mid Z_{M, n_{0}}>0\right] \operatorname{P}\left(Z_{M, n_{0}}>0\right)& \notag\\
\leq &\E\left[\frac{1}{\sqrt{N}} \sum_{i=1}^{n_{1}} \frac{1}{M} \sum_{m=1}^{M}\left|U_{n_{0}, n_{1}, i}\right|_{(m)} \mid Z_{M, n_{0}}=0\right]+\frac{n_{1}}{N^{1 / 2}} \operatorname{P}\left(Z_{M, n_{0}}>0\right)& \notag\\
=&\frac{1}{\sqrt{N}} \sum_{i=1}^{n_{1}} \E\left[\frac{1}{M} \sum_{m=1}^{M}\left|U_{n_{0}, n_{1}, i}\right|_{(m)} \mid Z_{M, n_{0}}=0\right]+\frac{n_{1}}{N^{1 / 2}} \operatorname{P}\left(Z_{M, n_{0}}>0\right) \notag&\\
\leq &\frac{n_{1}}{N^{1 / 2}}\left[\frac{3\bar{r}}{K}+\exp\left(\log M+1+(M-1) \log \left(\frac{n_0e}{(K-1)M}\right)-\frac{n_0}{K}\right)\right]\label{lem2.2}.&
\end{align}
Letting $K=\lfloor n_0^{1/2}\log n_0\rfloor\leq n_0$ in (\ref{lem2.1}) and (\ref{lem2.2}),  the lemma is proved.
\end{proof}
\begin{remark}
  Equation  \eqref{lem2.1} can also lead to \eqref{eq:use_l2} in the proof of Theorem \ref{thm:var} and \eqref{eq:I2_use2} in the proof of Lemma \ref{lem:6}.
\end{remark}
\begin{proof}[Proof of Lemma \ref{lem:5}]
In this proof, we use the following lemma about some regular distributions without a proof.
\begin{lemma}\label{lem:4}
Let  $Y_{1}, \ldots, Y_{N+1}$  be independently sampled from a standard exponential (i.e.,  $\Gamma(1,1)$), the Gamma distribution with parameters  $(1,1)$. Let  $S_{j}=\sum_{i=1}^{j} Y_{i}$  and  $S_{N+1, j}=\sum_{i=1}^{j} Y_{i} / \sum_{i=1}^{N+1} Y_{i}$, for  $1 \leq j \leq   N+1$. Let $k$ denote a positive integer. Then, $S_{N+1, j}$  has a Beta distribution with parameters $(j, N-j+1)$ and moments
\begin{align*}
&\E\left[\left(S_{N+1, j}\right)^{k}\right]=\frac{(j+k-1) ! N !}{(j-1) !(N+k) !}&.
\end{align*}
Especially, Let  $\xi_{1: N}, \ldots, \xi_{N: N}$ be the order statistics for a random sample of size  $N$ from the uniform distribution over $[0,1]$, any $\xi_{k+j: N}-\xi_{k: N}$ can be viewed as $S_{N+1, j}$ and has a Beta distribution with parameters $(j, N-j+1)$.
\end{lemma}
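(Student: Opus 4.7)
The plan is to proceed in three short steps: first identify the distribution of $S_{N+1,j}$ as Beta, then compute its moments by direct integration against the Beta density, and finally transfer the statement to the uniform order statistics via the well-known ratio representation.

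For the distributional claim, I would exploit the independence and closure properties of the Gamma family. Since $Y_1,\ldots,Y_{N+1}$ are i.i.d.\ $\Gamma(1,1)$, the partial sum $S_j=\sum_{i=1}^j Y_i$ follows $\Gamma(j,1)$ and $S_{N+1}-S_j=\sum_{i=j+1}^{N+1} Y_i$ follows $\Gamma(N-j+1,1)$; moreover, these two sums are independent, since they are functions of disjoint blocks of the $Y_i$'s. The standard Gamma-to-Beta construction then gives that $S_j/S_{N+1}=S_j/(S_j+(S_{N+1}-S_j))$ has the Beta distribution with parameters $(j,N-j+1)$, with density $f(s)=s^{j-1}(1-s)^{N-j}/B(j,N-j+1)$ on $[0,1]$.

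For the moment formula, I would simply integrate:
\[
\E\bigl[(S_{N+1,j})^{k}\bigr]=\frac{1}{B(j,N-j+1)}\int_0^1 s^{j+k-1}(1-s)^{N-j}\,{\sf d} s=\frac{B(j+k,N-j+1)}{B(j,N-j+1)},
\]
and using $B(a,b)=\Gamma(a)\Gamma(b)/\Gamma(a+b)$ together with the integer-factorial identity $\Gamma(n)=(n-1)!$, the right-hand side simplifies to $\frac{(j+k-1)!\,N!}{(j-1)!\,(N+k)!}$, as desired.

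For the final clause about uniform order statistics, I would invoke the classical R\'enyi-type representation: the vector $(S_1/S_{N+1},\ldots,S_N/S_{N+1})$ has exactly the same joint distribution as the ordered sample $(\xi_{1:N},\ldots,\xi_{N:N})$ from $\mathrm{Uniform}[0,1]$. Consequently, $\xi_{k+j:N}-\xi_{k:N}$ has the same law as $(S_{k+j}-S_k)/S_{N+1}=(Y_{k+1}+\cdots+Y_{k+j})/S_{N+1}$, which by the exchangeability of the $Y_i$'s equals in distribution $S_j/S_{N+1}=S_{N+1,j}$, completing the identification. No step here is a genuine obstacle, as each is a textbook computation; the only care needed is in citing the Gamma-ratio representation of uniform order statistics and in carrying out the exchangeability reduction cleanly.
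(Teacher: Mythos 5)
Your proposal is correct and complete. Note that the paper itself states this lemma explicitly ``without a proof,'' so there is no in-paper argument to compare against; your three-step argument --- Gamma closure and independence of disjoint blocks giving the Beta law of $S_j/S_{N+1}$, direct integration against the Beta density for the moments, and the R\'enyi representation of uniform order statistics combined with exchangeability to identify $\xi_{k+j:N}-\xi_{k:N}$ with $S_{N+1,j}$ --- is exactly the standard textbook derivation the authors are implicitly relying on.
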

Get back to the proof. From the assumptions, we can find that $F_N^{-1}$ are equicontinuous. To prove (\ref{t1}), we need the following three results:
\begin{align}
&\sum_{i=M+1}^{N-M}\left(m_N\left(F_N^{-1}\left(\xi_{i: N}\right)\right)-m_N\left(F_N^{-1}(i / N)\right)\right)\frac{\xi_{i+M: N}-\xi_{i-M: N}}{M}=o_{\P}(1),\label{1}&\\
&\sum_{i=M+1}^{N-M} m_N\left(F_N^{-1}(i / N)\right)\frac{\xi_{i+M: N}-\xi_{i-M: N}}{M}-\frac{2}{N} \sum_{i=M+1}^{N-M} m_N\left(F_N^{-1}(i / N)\right)=o_{\P}(1),\label{2}&\\
&\frac{1}{N} \sum_{i=M+1}^{N-M} m_N\left(F_N^{-1}(i / N)\right)-\int_{a}^{b} m_N(s) f_N(s) {\sf d} s=o(1).\label{3}&
\end{align}
For any $\epsilon>0$, there exists $\delta>0$ such that
\begin{align*}
&\operatorname{P}\left(\max _{i=1, \ldots, N}\left|m_N\left(F_N^{-1}\left(\xi_{i: N}\right)\right)-m_N\left(F_N^{-1}(i / N)\right)\right|<\varepsilon\right)\geq \operatorname{P}\left(\max _{i=1, \ldots, N}\left|\xi_{i: N}-i / N\right|<\delta\right) \rightarrow 1,&   
\end{align*}
and the first inequality holds for any $N$.  Then, we have
\begin{align*}
&\left|\sum_{i=M+1}^{N-M}\left(m_N\left(F_N^{-1}\left(\xi_{i: N}\right)\right)-m_N\left(F_N^{-1}(i / N)\right)\right)\frac{\xi_{i+M: N}-\xi_{i-M: N}}{M}\right|&\\
\leq&\max _{i=1, \ldots, N}\left|m_N\left(F_N^{-1}\left(\xi_{i: N}\right)\right)-m_N\left(F_N^{-1}(i / N)\right)\right|\left|\sum_{i=M+1}^{N-M}\frac{\xi_{i+M: N}-\xi_{i-M: N}}{M}\right|& \\
\leq&2\max _{i=1, \ldots, N}\left|m_N\left(F_N^{-1}\left(\xi_{i: N}\right)\right)-m_N\left(F_N^{-1}(i / N)\right)\right|\stackrel{\P}{\rightarrow} 0,   &
\end{align*}
that is, we have (\ref{1}). According to the derivation of Lemma S.6 in \cite{abadie2016matching}, we have
\begin{align*}
    &\E\left[\left(\sum_{i=M+1}^{N-M} m_N\left(F_N^{-1}(i / N)\right)\left(\xi_{i+k+1: N}-\xi_{i+k: N}\right)-\frac{1}{N} \sum_{i=M+1}^{N-M} m_N\left(F_N^{-1}(i / N)\right)\right)^2\right]&\\
    \leq &\left[\frac{-1}{N(N+1)} \sum_{i=M+1}^{N-M} m_N\left(F_N^{-1}(i / N)\right)\right]^2+\frac{1}{N^{2}} \sum_{i=M+1}^{N-M} m_N\left(F_N^{-1}(i / N)\right)^{2}&
\end{align*}
for $-M\leq k\leq M-1$. Then
\begin{align*}
    &\E\left[\left(\sum_{i=M+1}^{N-M} m_N\left(F_N^{-1}(i / N)\right)\frac{\xi_{i+M: N}-\xi_{i-M: N}}{M}-\frac{1}{N} \sum_{i=M+1}^{N-M} m_N\left(F_N^{-1}(i / N)\right)\right)^2\right]&\\
    \leq&\frac{1}{M^2}2M\sum_{k=-M}^{M-1}\E\left[\left(\sum_{i=M+1}^{N-M} m_N\left(F_N^{-1}(i / N)\right)\left(\xi_{i+k+1: N}-\xi_{i+k: N}\right)-\frac{1}{N} \sum_{i=M+1}^{N-M} m_N\left(F_N^{-1}(i / N)\right)\right)^2\right]&\\
    \leq &4\left[\frac{-1}{N(N+1)} \sum_{i=M+1}^{N-M} m_N\left(F_N^{-1}(i / N)\right)\right]^2+\frac{4}{N^{2}} \sum_{i=M+1}^{N-M} m_N\left(F_N^{-1}(i / N)\right)^{2}\\
    \leq& \frac{4C_m^2}{(N+1)^2} +\frac{4C_m^2}{N}\\
    \to& 0,
\end{align*}
where $C_m$ is the upper bound of $|m_N(\cdot)|$. Equation (\ref{2}) holds as its corollary. Finally, since $m_N$ and $F_N^{-1}$ are both equicontinuous, and $M/N\to0$, we have
\begin{align*}
    \frac{1}{N} \sum_{i=M+1}^{N-M} m_N\left(F_N^{-1}(i / N)\right)\overset{\P}{\to}\int_{0}^{1} m_N(F_N^{-1}(t)) {\sf d} t=\int_{a_N}^{b_N} m_N(s) f_N(s) {\sf d} s.  
\end{align*}
Putting the above results together, we obtain (\ref{t1}).

To prove (\ref{t2}), we need (\ref{3}) and the following two results,
\begin{align}
    &\sum_{i=M+1}^{N-M}\left(m\left(F_N^{-1}\left(\xi_{i: N}\right)\right)-m\left(F_N^{-1}(i / N)\right)\right) N\left(\frac{\xi_{i+M: N}-\xi_{i-M: N}}{M}\right)^{2}=o_{\P}(1),&\label{4}\\
    &\sum_{i=M+1}^{N-M} m\left(F_N^{-1}(i / N)\right) N\left(\frac{\xi_{i+M: N}-\xi_{i-M: N}}{M}\right)^{2}-\frac{4}{N} \sum_{i=M+1}^{N-M} m\left(F_N^{-1}(i / N)\right)=o_{\P}(1).&\label{5}
\end{align}
According to Lemma \ref{lem:4},
\begin{align*}
    &\E\left[\sum_{i=M+1}^{N-M} N\left(\frac{\xi_{i+M: N}-\xi_{i-M: N}}{M}\right)^{2}\right]=\frac{2 M(2 M+1)}{M^2}\frac{(N-2M)N}{(N+1)(N+2)}\leq 8.&
\end{align*}
Then, similar to the proof of (\ref{1}), we know that (\ref{4}) holds. According to the derivation of Lemma S.6 in \cite{abadie2016matching}, we have
\begin{align*}
    &E\left[\sum_{i=M+1}^{N-M} m\left(F_N^{-1}(i / N)\right) N\left(\frac{\xi_{i+M: N}-\xi_{i-M: N}}{M}\right)^{2}-\frac{4}{N} \sum_{i=M+1}^{N-M} m\left(F_N^{-1}(i / N)\right)\right]&\\
    \leq&\left[\left(\frac{2(2 M+1) N}{(N+1)(N+2)M}-\frac{4}{N}\right)\sum_{i=M+1}^{N-M} m\left(F_N^{-1}(i / N)\right)\right]^2&\\
    &+C_{m}^{2}\frac{(4M-1)(2M+3)!}{(2M-1)!M^4}\frac{N !(N-2M)N^{2}}{(N+4)!}&\\
    \leq&\left[\frac{2N^2-3NM-2M}{N(N+1)(N+2)M}\sum_{i=M+1}^{N-M} m\left(F_N^{-1}(i / N)\right)\right]^2+625C_{m}^{2}\frac{4M-1}{N+4}\\
    \leq&\left(\frac{(2N^2-3NM-2M)C_m}{(N+1)(N+2)M}\right)^2+625C_{m}^{2}\frac{4M-1}{N+4}\\
     \to&0.&
\end{align*}
Then, (\ref{5}) holds as the corollary of this result. Putting together, we proved (\ref{t2}). 
\end{proof}
\begin{remark}
Of note, when there are two families of $m_N(\cdot)$ and $f_N(\cdot)$ such that the functions there satisfy the conditions of this lemma uniformly, the convergences in this lemma hold uniformly for any $m_N$ and $f_N$ chosen from the two families.
\end{remark}
\begin{remark}
To meet the assumptions of this lemma in the calculation of $\sigma_2^2$ in Lemma \ref{lem:10}, we require the global Lipschitz constant in Assumption \ref{assumption:3}(iv) and $f$ to be continuous  in Assumption \ref{assumption:3}(ii). 
In this lemma, there is no distribution shift, and we thus simply use $o_{\P}(1)$ and $\overset{\P}{\to}$ to describe the convergence in probability.
\end{remark}

\begin{proof}[Prood of Lemma \ref{lem:6}] 
Let
\begin{align*}
    &Z_{N k}^{(1)}:=m_N\left(\tilde{X}_k\right) N\left(\frac{\tilde{P}_{N k}}{M}-\frac{f_N\left(\tilde{X}_k\right)}{g_N\left(\tilde{X}_k\right)} \frac{G_N\left(\tilde{X}_{V_{N k}+M: N}\right)-G_N\left(\tilde{X}_{V_{N k}-M: N}\right)}{2M}\right).&
\end{align*}
Since  $f_N$  and  $g_N$  are continuous, with the mean value theorem, there are values  $\bar{X}_{f, k, N, M}$  and  $\bar{X}_{g, k, N, M}$  in  
\[
\left(\left(\tilde{X}_{V_{N k}: N}+\tilde{X}_{V_{N k}-M: N}\right) / 2, \left(\tilde{X}_{V_{N k}: N}+\tilde{X}_{V_{N k}+M: N}\right) / 2\right)
\]
and  
\[
\left(\tilde{X}_{V_{N k}-M: N}, \tilde{X}_{V_{N k}+M: N}\right), 
\]
respectively, such that
\begin{align*}
0= &m_N\left(\tilde{X}_k\right) N\left(\frac{\tilde{P}_{N k}}{M}-\frac{f_N\left(\bar{X}_{f, k, N, M}\right)}{g_N\left(\bar{X}_{g, k, N, M}\right)} \frac{G_N\left(\tilde{X}_{V_{N k}+M: N}\right)-G_N\left(\tilde{X}_{V_{N k}-M: N}\right)}{2M}\right)+o_{\operatorname{P}_N}(1)&\\
=&Z_{N k}^{(1)}+\left(\frac{f_N\left(\tilde{X}_k\right)}{g_N\left(\tilde{X}_k\right)}-\frac{f_N\left(\bar{X}_{f, k, N, M}\right)}{g_N\left(\bar{X}_{g, k, N, M}\right)}\right)m_N \left(\tilde{X}_k\right)\frac{N\left(G_N\left(\tilde{X}_{V_{N k}+M: N}\right)-G_N\left(\tilde{X}_{V_{N k}-M: N}\right)\right)}{2M}+o_{\operatorname{P}_N}(1).&
\end{align*}
Since $g_N(x)$ is bounded and bounded away from 0,  Lemma \ref{lem:2}(i) implies that
\begin{align}
    &\max_{k}|\tilde{X}_{k:N}-\tilde{X}_{k+M:N}|=o_{\operatorname{P}_N}(1).\label{eq:I2_use2}
\end{align}  
According to lemma \ref{lem:4}, we know that
\begin{align}\label{double}
    &\E_N\left[\frac{N\left(G_N\left(\tilde{X}_{V_{N k}+M: N}\right)-G_N\left(\tilde{X}_{V_{N k}-M: N}\right)\right)}{2M}\right]=\frac{2MN}{2M(N+1)}=O(1),&
\end{align}  
where $\E_N$ is the expectation under $\operatorname{P}_N$. Therefore, 
\begin{align*}
0=&Z_{N k}^{(1)}+\left(\frac{f_N\left(\tilde{X}_{k}\right)}{g_N\left(\tilde{X}_k\right)}-\frac{f_N\left(\bar{X}_{f, k, N, M}\right)}{g_N\left(\bar{X}_{g, k, N, M}\right)}\right)m_N \left(\tilde{X}_{k}\right)\frac{N\left(G_N\left(\tilde{X}_{V_{N k}+M: N}\right)-G_N\left(\tilde{X}_{V_{N k}-M: N}\right)\right)}{2M}+o_{\operatorname{P}_N}(1)& \\
= & Z_{N k}^{(1)}+o_{\operatorname{P}_N}(1) O_{\operatorname{P}_N}(1)+o_{\operatorname{P}_N}(1),&
\end{align*}
that is, $Z_{N k}^{(1)}=o_{\operatorname{P}_N}(1)$. According to the derivation of Lemma S.7 in \cite{abadie2016matching}, for any $r>0$,
\begin{align}\label{P_NK}
\E_N\left[\left|\frac{N\tilde{P}_{N k}}{M}\right|^r\right]\leq C_h^r\frac{(2 M+r-1)!}{(2 M-1)!M^r}, &
\end{align}
where $C_h$ is an upper bound of $f_N(x)/g_N(x)$. With this result and (\ref{double}), we know that for any $r>0$, $\E_N\left[\left|Z_{Nk}^{(1)}\right|^r\right]$ are uniformly bounded in $M$ and $N$. This, when combined with the fact that $Z_{N k}^{(1)}=o_{\operatorname{P}_N}(1)$, implies 
\[
\E_N\left[\left|Z_{Nk}^{(1)}\right|\right]\to 0. 
\]
Then, according to the derivation of Lemma S.7 in \cite{abadie2016matching}, 
\begin{align*}
    &\frac{1}{N}\sum_{k=1}^{N} Z_{Nk}^{(1)}=o_{\operatorname{P}_N}(1).
\end{align*}
Since we also know that $\left(Z_{N k}^{(1)}\right)^2=o_{\operatorname{P}_N}(1)$ and that $E_N\left[\left|Z_{Nk}^{(1)}\right|^{2r}\right]$ is uniformly bounded in $M$ and $N$, there is $E_N\left[\left|Z_{Nk}^{(1)}\right|^2\right]\to 0$. This implies the second statement of this lemma.
\end{proof}
\begin{remark}
    Actually, if there is a family of $\operatorname{P}_N$, a family of $f_N(\cdot)$, and a family of $m_N(\cdot)$ such that all the $g_N$, $f_N$ and $m_N$ functions derived from the families satisfy the conditions of this lemma uniformly, the convergences in this lemma hold uniformly for any $\operatorname{P}_N$, $f_N$ and $m_N$ from the families. Lemma \ref{lem:6} is a stronger version of Lemma S.7 in \cite{abadie2016matching}. The original lemma is its corollary.
\end{remark}

\begin{proof}[Proof of Lemma \ref{lem:8}]
Here, we only prove the second formula. The proof of the first formula is similar but simpler. Without loss of generality, we only consider the cases where $w=0$. 

Let $\mathbf{W}_N=(W_1,\cdots,W_N)$. For any $i$ satisfying $W_i=0$, we have $K_{M,\theta_N}(i)\mid \mathbf{W}_N,\mathbf{F}_{N,0}\sim B(N_1,P_{N i})$, where $B(\cdot,\cdot)$ denotes binomial distribution. Let 
\[
\mu_{r, i}=\E_{\theta_N}\left[K_{M,\theta_N}^{r}(i) \mid \mathbf{W}_N, \mathbf{F}_{N,0}\right] ~~~ {\rm and}~~~  \mu_{r, s, i, j}=\E_{\theta_N}\left[K_{M,\theta_N}^{r}(i) K_{M,\theta_N}^{s}(j) \mid \mathbf{W}_N, \mathbf{F}_{N,0}\right],
\]
we know from the conditional distribution that
\begin{align*}
\mu_{2, i}=& N_{1}P_{N i}(1-P_{N i})+N_{1}^2P_{N i}^{2}.
\end{align*}
We first prove
\begin{align}\label{lem8_original}
&\frac{\ind(C_1N\leq N_0\leq C_2N)}{N_0} \sum_{i:W_i=0} \bar{m}(0,p(X_i;\theta_N)) \left(\left(\frac{K_{M,\theta_N}(i)}{M}\right)^{2}-\frac{\mu_{2,i}}{M^2}\right)=o_{\operatorname{P}_N}(1).&
\end{align}
Consider
\begin{align}
&\E_{\theta_N}\left[\left(\frac{\ind(C_1N\leq N_0\leq C_2N)}{N_{0}} \sum_{i:W_i=0} \bar{m}(0,p(X_i;\theta_N))\frac{K_{M,\theta_N}^{2}(i)-\mu_{2, i}}{M^2}\right)^{2} \mid \mathbf{W}_N, \mathbf{F}_{N,0}\right]&\notag\\
=&\frac{\ind(C_1N\leq N_0\leq C_2N)}{N_{0}^{2}} \sum_{i:W_i=0} \bar{m}(0,p(X_i;\theta_N))^2\frac{\mu_{4, i}-\mu_{2, i}^{2}}{M^4}&\notag\\
&+\frac{\ind(C_1N\leq N_0\leq C_2N)}{N_{0}^{2}} \sum_{(i,j):W_i=W_j=0,i\neq j}\bar{m}(0,p(X_i;\theta_N)) \bar{m}\left(0, p(X_j;\theta_N)\right)\frac{\mu_{2,2, i, j}-\mu_{2, i} \mu_{2, j}}{M^4}.\label{decompose}
\end{align}
For the first term on the right-hand side, since $\mathbf{W}_N$ only reveals information about $N_0$ and $W_{i}$ about $K_{M,\theta_N}(i)$, we know that
\begin{align*}
&\E_{\theta_N}\left[\frac{\ind(C_1N\leq N_0\leq C_2N)}{N_{0}^{2}} \sum_{i:W_i=0} \bar{m}^2\left(0, p(X_i;\theta_N)\right)\frac{\mu_{4, i}-\mu_{2, i}^{2}}{M^4}\mid \mathbf{W}_N\right]\\
\leq &\frac{\ind(C_1N\leq N_0\leq C_2N)C_{m}^2}{N_{0}}\E_{\theta_N}\left[\left(\frac{K_{M,\theta_N}(i)}{M}\right)^4\mid W_i=0, N_0\right],
\end{align*}
where $C_{m}^2$ is the upper bound of $|\bar{m}|$. From Lemma \ref{lem:7},  we know that the term is uniformly $o(1)$ in $N_0$. Accordingly, the expectation of the first term on the right-hand side of (\ref{decompose}) is $o(1)$.

For the second term, let $F_{(v)}$ be the $v$-th smallest value in $\mathbf{F}_{N,0}$. As long as $W_i=0$, let the catchment interval of observation $i$ be
\begin{align*}
    &A_M(i) := \left\{\begin{array}{cc}
			\left(\frac{F_{(v-M)}+F_{(v)}}{2}, \frac{F_{(v+M)}+F_{(v)}}{2}\right),\quad &M+1\leq v\leq N_0-M,\\
			\left(\frac{F_{(v-M)}+F_{(v)}}{2}, F_{(N_0)}\right),\quad &v> N_0-M,\\
                \left(F_{(1)}, \frac{F_{(v+M)}+F_{(v)}}{2}\right),\quad &v<M+1,
		\end{array}\right.&
\end{align*}
where $v$ is the rank of $p(X_i;\theta_N)$ among $\mathbf{F}_{N,0}$. Let  $I_{ij}$  be an indicator function that takes value 1 if the catchment intervals of observations  $i$  and  $j$ overlap, and value zero otherwise. According to Lemma S.10 in \cite{abadie2016matching}, only when $I_{ij}=1$, there may be $\left(\mu_{2,2, i, j}-\mu_{2, i} \mu_{2, j}\right)>0$, and the number of $(i,j)$ satisfying $i\neq j$ and $I_{ij}=1$ is smaller than  $2(M-1)N_{0}$. Thus,
\begin{align*}
&\frac{\ind(C_1N\leq N_0\leq C_2N)}{N_{0}^{2}} \sum_{(i,j):W_i=W_j=0,i\neq j} \bar{m}(0,p(X_i;\theta_N)) \bar{m}(0,p(X_j;\theta_N))\frac{\mu_{2,2, i, j}-\mu_{2, i} \mu_{2, j}}{M^4}\\
\leq&\frac{\ind(C_1N\leq N_0\leq C_2N)}{N_{0}^{2}} \sum_{(i,j):W_i=W_j=0,i\neq j}\bar{m}(0,p(X_i;\theta_N))\bar{m}(0,p(X_j;\theta_N))\frac{\mu_{4, i}+\mu_{4, j}}{2M^4} I_{ij}.
\end{align*}
We can go on to obtain that
\begin{align*}
&\E_{\theta_N}\left[\frac{\ind(C_1N\leq N_0\leq C_2N)}{N_{0}^{2}} \sum_{(i,j):W_i=W_j=0,i\neq j}\bar{m}(0,p(X_i;\theta_N))\bar{m}(0,p(X_j;\theta_N))\frac{\mu_{4, i}+\mu_{4, j}}{M^4} I_{ij}\mid \mathbf{W}_N\right]&\\
\leq&\frac{C_{m}^2\ind(C_1N\leq N_0\leq C_2N)}{N_{0}^{2}} \left(\E_{\theta_N}\left[\sum_{(i,j):W_i=W_j=0,i\neq j}\left(\frac{\mu_{4, i}+\mu_{4, j}}{M^4}\right)^2\mid \mathbf{W}_N\right] \E_{\theta_N}\left[\sum_{(i,j):W_i=W_j=0,i\neq j}I_{ij}\mid \mathbf{W}_N\right]\right)^{1/2}\\
\leq& \frac{\ind(C_1N\leq N_0\leq C_2N)C_{m}^2}{N_{0}^{2}} \left(4N_0^2\E_{\theta_N}\left[\frac{K_M(i)^8}{M^8}\mid W_i=0,N_0\right] 2(M-1)N_0\right)^{1/2}.
\end{align*}
Then, since 
\[
\frac{\ind(C_1N\leq N_0\leq C_2N)(M-1)}{N_0}\to0
\]
uniformly in $N_0$ and 
\[
\ind(C_1N\leq N_0\leq C_2N)\E_{\theta_N}\left[\frac{K_M(i)^8}{M^8}\mid W_i=0,N_0\right]
\]
is uniformly bounded in $N_0$, we obtain that the expectation of the second term on the right-hand side of (\ref{decompose}) is also $o(1)$. Since the left-hand side of (\ref{lem8_original}) converges to 0 in mean squares, (\ref{lem8_original}) holds. 

By Lemma \ref{lem:5} and Lemma \ref{lem:6}, we know that
\begin{align}
    &\frac{\ind(C_1N\leq N_0\leq C_2N)}{N_0} \sum_{i:W_i=0} \bar{m}(0,p(X_i;\theta_N)) \frac{N_1P_{Ni}}{M}=O_{\operatorname{P}_{\theta_N}}(1),
\end{align}
which, in combination with (\ref{P_NK}), yields that
\begin{align*}
    &\frac{\ind(C_1N\leq N_0\leq C_2N)}{N_0} \sum_{i:W_i=0} \bar{m}(0,p(X_i;\theta_N)) \frac{N_1P_{Ni}(1-P_{Ni})}{M^2}\overset{\operatorname{P}_N}{\to}0,
\end{align*}
since $M\to\infty$ and $\E_{\theta_N}\left[\frac{NP_{Ni}^2}{M^2}\mid W_i=0\right]\to0$ as $N\to\infty$. Therefore, (\ref{lem8_original}) is equivalent to what we want to prove.
\end{proof}
\begin{remark}
    Actually, this lemma still holds when we replace $\theta_N$ with any sequence $\tilde{\theta}_N\to\theta^*$. Furthermore, the convergences achieved by this lemma are uniform in a family of $\tilde{\theta}_N$ as long as $\tilde{\theta}_N\to\theta^*$ uniformly.
\end{remark}
{
\bibliographystyle{apalike}
\bibliography{AMS}

\begin{thebibliography}{}

\bibitem[Abadie and Imbens, 2006]{abadie2006large}
Abadie, A. and Imbens, G.~W. (2006).
\newblock Large sample properties of matching estimators for average treatment
  effects.
\newblock {\em Econometrica}, 74:235--267.

\bibitem[Abadie and Imbens, 2008]{abadie2008failure}
Abadie, A. and Imbens, G.~W. (2008).
\newblock On the failure of the bootstrap for matching estimators.
\newblock {\em Econometrica}, 76(6):1537--1557.

\bibitem[Abadie and Imbens, 2011]{abadie2011bias}
Abadie, A. and Imbens, G.~W. (2011).
\newblock Bias-corrected matching estimators for average treatment effects.
\newblock {\em Journal of Business and Economic Statistics}, 29:1--11.

\bibitem[Abadie and Imbens, 2012]{abadie2012martingale}
Abadie, A. and Imbens, G.~W. (2012).
\newblock A martingale representation for matching estimators.
\newblock {\em Journal of the American Statistical Association},
  107(498):833--843.

\bibitem[Abadie and Imbens, 2016]{abadie2016matching}
Abadie, A. and Imbens, G.~W. (2016).
\newblock Matching on the estimated propensity score.
\newblock {\em Econometrica}, 84:781--807.

\bibitem[Andreou and Werker, 2012]{andreou2012alternative}
Andreou, E. and Werker, B.~J. (2012).
\newblock An alternative asymptotic analysis of residual-based statistics.
\newblock {\em Review of Economics and Statistics}, 94(1):88--99.

\bibitem[Andrews, 1994]{andrews1994empirical}
Andrews, D.~W. (1994).
\newblock Empirical process methods in econometrics.
\newblock {\em Handbook of Econometrics}, 4:2247--2294.

\bibitem[Angrist and Pischke, 2009]{angrist2009mostly}
Angrist, J.~D. and Pischke, J.-S. (2009).
\newblock {\em Mostly Harmless Econometrics: An Empiricist's Companion}.
\newblock Princeton University Press.

\bibitem[Azadkia and Chatterjee, 2021]{azadkia2019simple}
Azadkia, M. and Chatterjee, S. (2021).
\newblock A simple measure of conditional dependence.
\newblock {\em The Annals of Statistics}, 49(6):3070--3102.

\bibitem[Bickel et~al., 1998]{bickel1993efficient}
Bickel, P.~J., Klaassen, C.~A., Bickel, P.~J., Ritov, Y., Klaassen, J.,
  Wellner, J.~A., and Ritov, Y. (1998).
\newblock {\em Efficient and Adaptive Estimation for Semiparametric Models}.
\newblock Springer.

\bibitem[Chatterjee, 2021]{chatterjee2020new}
Chatterjee, S. (2021).
\newblock A new coefficient of correlation.
\newblock {\em Journal of the American Statistical Association},
  116(535):2009--2022.

\bibitem[Chernozhukov et~al., 2018]{chernozhukov2018double}
Chernozhukov, V., Chetverikov, D., Demirer, M., Duflo, E., Hansen, C., Newey,
  W., and Robins, J. (2018).
\newblock Double/debiased machine learning for treatment and structural
  parameters.
\newblock {\em The Econometrics Journal}, 21(1):C1--C68.

\bibitem[Fr{\"o}lich, 2004]{frolich2004finite}
Fr{\"o}lich, M. (2004).
\newblock Finite-sample properties of propensity-score matching and weighting
  estimators.
\newblock {\em Review of Economics and Statistics}, 86(1):77--90.

\bibitem[Fr{\"o}lich, 2005]{frolich2005matching}
Fr{\"o}lich, M. (2005).
\newblock Matching estimators and optimal bandwidth choice.
\newblock {\em Statistics and Computing}, 15(3):197--215.

\bibitem[Hahn, 1998]{hahn1998role}
Hahn, J. (1998).
\newblock On the role of the propensity score in efficient semiparametric
  estimation of average treatment effects.
\newblock {\em Econometrica}, 66(2):315--331.

\bibitem[Hallin et~al., 2022]{hallin2022center}
Hallin, M., La~Vecchia, D., and Liu, H. (2022).
\newblock Center-outward {R}-estimation for semiparametric {VARMA} models.
\newblock {\em Journal of the American Statistical Association},
  117(538):925--938.

\bibitem[Hallin et~al., 2023]{hallin2023rank}
Hallin, M., La~Vecchia, D., and Liu, H. (2023).
\newblock Rank-based testing for semiparametric {VAR} models: a measure
  transportation approach.
\newblock {\em Bernoulli}, 29(1):229--273.

\bibitem[Heckman et~al., 1997]{heckman1997matching}
Heckman, J.~J., Ichimura, H., and Todd, P.~E. (1997).
\newblock Matching as an econometric evaluation estimator: Evidence from
  evaluating a job training programme.
\newblock {\em The Review of Economic Studies}, 64(4):605--654.

\bibitem[Henmi and Eguchi, 2004]{henmi2004paradox}
Henmi, M. and Eguchi, S. (2004).
\newblock A paradox concerning nuisance parameters and projected estimating
  functions.
\newblock {\em Biometrika}, 91(4):929--941.

\bibitem[Hirano et~al., 2003]{hirano2003efficient}
Hirano, K., Imbens, G.~W., and Ridder, G. (2003).
\newblock Efficient estimation of average treatment effects using the estimated
  propensity score.
\newblock {\em Econometrica}, 71(4):1161--1189.

\bibitem[Hitomi et~al., 2008]{hitomi2008puzzling}
Hitomi, K., Nishiyama, Y., and Okui, R. (2008).
\newblock A puzzling phenomenon in semiparametric estimation problems with
  infinite-dimensional nuisance parameters.
\newblock {\em Econometric Theory}, 24(6):1717--1728.

\bibitem[Huber et~al., 2013]{huber2013performance}
Huber, M., Lechner, M., and Wunsch, C. (2013).
\newblock The performance of estimators based on the propensity score.
\newblock {\em Journal of Econometrics}, 175(1):1--21.

\bibitem[Imbens, 2004]{imbens2004nonparametric}
Imbens, G.~W. (2004).
\newblock Nonparametric estimation of average treatment effects under
  exogeneity: A review.
\newblock {\em Review of Economics and Statistics}, 86(1):4--29.

\bibitem[Imbens, 2015]{imbens2015matching}
Imbens, G.~W. (2015).
\newblock Matching methods in practice: Three examples.
\newblock {\em Journal of Human Resources}, 50(2):373--419.

\bibitem[Le~Cam, 1972]{le1972limits}
Le~Cam, L. (1972).
\newblock Limits of experiments.
\newblock In {\em Proceedings of the Sixth Berkeley Symposium on Mathematical
  Statistics and Probability}, volume~1, pages 245--261. University of
  California Press Berkeley-Los Angeles.

\bibitem[Le~Cam and Yang, 2000]{MR1784901}
Le~Cam, L. and Yang, G.~L. (2000).
\newblock {\em Asymptotics in Statistics: Some Basic Concepts {\em (2nd ed.)}}.
\newblock Springer Series in Statistics. Springer-Verlag, New York.

\bibitem[Lin et~al., 2023]{lin2023estimation}
Lin, Z., Ding, P., and Han, F. (2023).
\newblock Estimation based on nearest neighbor matching: from density ratio to
  average treatment effect.
\newblock {\em Econometrica}, (in press).

\bibitem[Lin and Han, 2022a]{lin2022limit}
Lin, Z. and Han, F. (2022a).
\newblock Limit theorems of {C}hatterjee's rank correlation.
\newblock {\em arXiv preprint arXiv:2204.08031}.

\bibitem[Lin and Han, 2022b]{lin2022regression}
Lin, Z. and Han, F. (2022b).
\newblock On regression-adjusted imputation estimators of the average treatment
  effect.
\newblock {\em arXiv preprint arXiv:2212.05424}.

\bibitem[Lin and Han, 2023a]{lin2021boosting}
Lin, Z. and Han, F. (2023a).
\newblock On boosting the power of {C}hatterjee's rank correlation.
\newblock {\em Biometrika}, 110(2):283--299.

\bibitem[Lin and Han, 2023b]{lin2023failure}
Lin, Z. and Han, F. (2023b).
\newblock On the failure of the bootstrap for {C}hatterjee's rank correlation.
\newblock {\em arXiv preprint arXiv:2303.14088}.

\bibitem[Lok, 2021]{lok2021estimating}
Lok, J.~J. (2021).
\newblock How estimating nuisance parameters can reduce the variance (with
  consistent variance estimation).
\newblock {\em arXiv preprint arXiv:2109.02690}.

\bibitem[Newey and McFadden, 1994]{newey1994large}
Newey, W.~K. and McFadden, D. (1994).
\newblock Large sample estimation and hypothesis testing.
\newblock {\em Handbook of Econometrics}, 4:2111--2245.

\bibitem[Neyman, 1923]{neyman1923applications}
Neyman, J. (1923).
\newblock Sur les applications de la th{\'e}orie des probabilit{\'e}s aux
  experiences agricoles: Essai des principes.
\newblock {\em Roczniki Nauk Rolniczych}, 10(1):1--51.

\bibitem[Pierce, 1982]{pierce1982asymptotic}
Pierce, D.~A. (1982).
\newblock The asymptotic effect of substituting estimators for parameters in
  certain types of statistics.
\newblock {\em The Annals of Statistics}, 10(2):475--478.

\bibitem[Pollard, 1989]{pollard1989asymptotics}
Pollard, D. (1989).
\newblock Asymptotics via empirical processes.
\newblock {\em Statistical science}, 4(4):341--354.

\bibitem[Randles, 1982]{randles1982asymptotic}
Randles, R.~H. (1982).
\newblock On the asymptotic normality of statistics with estimated parameters.
\newblock {\em The Annals of Statistics}, 10(2):462--474.

\bibitem[Robins et~al., 1992]{robins1992estimating}
Robins, J.~M., Mark, S.~D., and Newey, W.~K. (1992).
\newblock Estimating exposure effects by modelling the expectation of exposure
  conditional on confounders.
\newblock {\em Biometrics}, 48(2):479--495.

\bibitem[Robins et~al., 1994]{robins1994estimation}
Robins, J.~M., Rotnitzky, A., and Zhao, L.~P. (1994).
\newblock Estimation of regression coefficients when some regressors are not
  always observed.
\newblock {\em Journal of the American statistical Association},
  89(427):846--866.

\bibitem[Rosenbaum, 1987]{rosenbaum1987model}
Rosenbaum, P.~R. (1987).
\newblock Model-based direct adjustment.
\newblock {\em Journal of the American statistical Association},
  82(398):387--394.

\bibitem[Rosenbaum and Rubin, 1983]{rosenbaum1983central}
Rosenbaum, P.~R. and Rubin, D.~B. (1983).
\newblock The central role of the propensity score in observational studies for
  causal effects.
\newblock {\em Biometrika}, 70(1):41--55.

\bibitem[Rubin, 1974]{rubin1974estimating}
Rubin, D.~B. (1974).
\newblock Estimating causal effects of treatments in randomized and
  nonrandomized studies.
\newblock {\em Journal of Educational Psychology}, 66(5):688--701.

\bibitem[Rubin and Thomas, 1996]{rubin1996matching}
Rubin, D.~B. and Thomas, N. (1996).
\newblock Matching using estimated propensity scores: relating theory to
  practice.
\newblock {\em Biometrics}, 52(1):249--264.

\bibitem[Scharfstein et~al., 1999]{scharfstein1999adjusting}
Scharfstein, D.~O., Rotnitzky, A., and Robins, J.~M. (1999).
\newblock Adjusting for nonignorable drop-out using semiparametric nonresponse
  models.
\newblock {\em Journal of the American Statistical Association},
  94(448):1096--1120.

\bibitem[Stuart, 2010]{stuart2010matching}
Stuart, E.~A. (2010).
\newblock Matching methods for causal inference: A review and a look forward.
\newblock {\em Statistical Science}, 25(1):1--21.

\bibitem[Su et~al., 2023]{su2023estimated}
Su, F., Mou, W., Ding, P., and Wainwright, M. (2023).
\newblock When is the estimated propensity score better? high-dimensional
  analysis and bias correction.
\newblock {\em arXiv preprint arXiv:2303.17102}.

\bibitem[van~der Vaart, 1998]{van2000asymptotic}
van~der Vaart, A.~W. (1998).
\newblock {\em Asymptotic Statistics}.
\newblock Cambridge University Press.

\end{thebibliography}
}

\end{document}